\documentclass{amsart}
\usepackage{verbatim,amsfonts,color,mathrsfs,stmaryrd,graphicx, mathdots}
\usepackage{xcolor}
\usepackage{tikz}
\usepackage{tikz-cd}
\usepackage{tkz-euclide}
\usetikzlibrary{patterns}   
\usetikzlibrary{positioning}
\usetikzlibrary{decorations.pathreplacing,calligraphy}

\usepackage{pgfplots}

\pgfplotsset{compat=1.6}
\pgfplotsset{soldot/.style={color=blue,only marks,mark=*}} 
\pgfplotsset{holdot/.style={color=blue,fill=white,only marks,mark=*}}


\theoremstyle{plain}
\newtheorem{Thm}{Theorem}[section]
\newtheorem{Cor}[Thm]{Corollary}

\newtheorem{Lem}[Thm]{Lemma}
\newtheorem{Prop}[Thm]{Proposition}

\theoremstyle{definition}
\newtheorem{Def}[Thm]{Definition}
\newtheorem{Rmk}[Thm]{Remark}
\newtheorem{Eg}[Thm]{Example}

\theoremstyle{remark}

\textwidth=5.75in
\oddsidemargin=0in
 
\usepackage{hyperref}

\begin{document}


\title[Proofs of ergodicity of piecewise M\"obius interval maps using planar extensions]{Proofs of ergodicity of piecewise M\"obius interval maps using planar extensions}
\author{Kariane Calta}
\address{Vassar College}  
\email{kacalta@vassar.edu }

\author{Cor Kraaikamp}
\address{Technische Universiteit Delft and Thomas Stieltjes Institute of Mathematics\\ EWI\\ Mekelweg 4\\ 2628 CD Delft, the Netherlands}
\email{c.kraaikamp@tudelft.nl}

\author{Thomas A. Schmidt}
\address{Oregon State University\\Corvallis, OR 97331}
\email{toms@math.orst.edu}
\subjclass[2010]{37A25,  (11K50, 37A10, 37A44)}
\date{3 May 2024}


\begin{abstract}  We give two  results for deducing dynamical properties of piecewise M\"obius interval maps from their related planar extensions.    First,    eventual expansivity and   the existence of an ergodic  invariant probability measure  equivalent to Lebesgue measure both follow from mild finiteness conditions on the planar extension along with a new property ``bounded non-full range" used to relax traditional Markov conditions.     Second, the ``quilting" operation to appropriately nearby planar systems, introduced by Kraaikamp and co-authors,  can be used to prove several key dynamical properties of a piecewise M\"obius interval map.   As a proof of concept, we apply these results to recover known results   on the well-studied Nakada $\alpha$-continued fractions; we obtain similar results for interval maps derived from an infinite family of non-commensurable Fuchsian groups.       
\end{abstract}

\maketitle

\tableofcontents

\section{ Introduction} For much of the technical vocabulary mentioned here, see \S~\ref{s:back} Background.

 \subsection{Historical overview} Metric number theory can be said to have begun with Gauss's discovery of the invariant measure for the regular continued fraction map.   The map, defined on $[0,1]$, fixes $x=0$ and  for all nonzero $x$ is given by $f: x \mapsto 1/x - \lfloor 1/x\rfloor$ where $\lfloor \cdot \rfloor$ denotes the floor or nearest integer function.    In a letter to Laplace in 1812, Gauss stated what in modern terms is that the measure given by $dx/(\ln 2\, (1+x)\,)$   is invariant under $f$.   He did not, however, state how he found this measure.    

Regular continued fractions appear in various settings in mathematics.  For instance, in 1924  E.~Artin  used them to show that the existence of unit tangent vectors of the modular surface whose orbit under the geodesic flow is dense in the unit tangent bundle.  
 In 1935, Hedlund used the connection to regular continued fractions to prove that the geodesic flow on the modular surface is ergodic with respect to the natural measure is ``metrically transitive", a property which implies ergodicity.  The following year,  E.~Hopf showed that the geodesic flow on the unit tangent bundle of any hyperbolic surface of finite volume is ergodic.   

 In fact, this last result can be used to show the ergodicity of  the invariant measure for the regular continued fraction map and even determine the measure beforehand.    
In works of Adler-Flatto in the 1980s and 1990s and of C.~Series in 1991,  it is shown that one can find a cross section for the geodesic flow on the unit tangent bundle of the modular surface ---  thus a subset of the unit tangent bundle  which every flow line meets transversely ---  and then show that the dynamical system of  the first return map to this cross section by the flow is an extension of the regular continued fraction system.    

In 1977, Nakada, Ito and Tanaka \cite{NakadaItoTanaka} gave a  more elementary presentation of an extension of  the regular continued fraction system.   They considered the map $\mathcal T$ on the unit square defined by  $\mathcal T(x,y) = (\,1/x - \lfloor 1/x\rfloor, 1/(y + \lfloor 1/x\rfloor)\,)$.   Since the values $\lfloor 1/x\rfloor$ are locally constant,  using the Jacobian determinant of $\mathcal T$  one easily shows that the measure $\mu$ given by $(1+xy)^{-2}\, dx\, dy$ is $\mathcal T$-invariant.     The {\em marginal measure} $\nu$ of this is the measure on $[0,1]$ given by assigning to any Borel set $E$ the value of $\mu$ on the subset of the square fibering over $E$.  Here one easily finds Gauss's measure, up to the normalizing constant.  Nakada, Ito and Tanaka showed that the system of $\mathcal T$    gives the natural extension of the regular continued fraction map.    (Keane \cite{Keane} has suggested that Gauss may have found his invariant measure by use of a closely related system.)
 
In 1981,  Nakada \cite{N} introduced his  $\alpha$-continued fractions, which form a one dimensional family of interval maps, $T_{\alpha}$  with  $\alpha \in [0,1]$.   (In fact,  $T_1$ is the Gauss continued fraction map,  and $T_{1/2}$ is the ``nearest-integer continued fraction" map.)   Using planar natural extensions, he gave the Kolmogorov--Sinai  measure theoretic entropy  --- hereafter simply \emph{entropy}, for those maps corresponding to $\alpha \in [1/2, 1]$.     In 1991,     Kraaikamp gave a more direct calculation of these entropy values by using his $S$-expansions,  based upon inducing past subsets of the planar natural extension of the regular continued fraction map given in \cite{NakadaItoTanaka}.      

Let $h(T_{\alpha})$ denote the  entropy  of $T_{\alpha}$.   In 2008 Nakada and Natsui ~\cite{NN} gave explicit intervals on which $\alpha \mapsto h(T_{\alpha})$ is respectively constant, increasing, decreasing.   
Indeed, they showed this by exhibiting intervals of $\alpha$ such that  $T^{k}_{\alpha}(\alpha) = T^{k'}_{\alpha}({\alpha-1})$ for pairs of positive integers $(k,k')$ --- such intervals are now known as {\em matching intervals}\label{matchingI} ---  and showed that  the function $\alpha \mapsto h(T_{\alpha})$ is constant (resp.\ increasing, decreasing) on such an interval if $k=k'$ (resp.\ $k>k'$, $k<k'$).       That same year,   Luzzi and Marmi  \cite{LM}  strongly suggested that    $\alpha \mapsto h(T_{\alpha})$ is a continuous function of $\alpha$.  They also asked if every $T_{\alpha}$ is the factor of some cross section to the geodesic flow on the unit tangent bundle of the modular surface.    The continuity was proven in 2012  by both C.~Carminati  and G.~Tiozzo \cite{CT} and  \cite{KraaikampSchmidtSteiner}.  This latter paper used explicit constructions of planar natural extensions.   A presumably necessary commonality of the two papers was the description of the complement of the set of all matching intervals, called the {\em exceptional set}.   The question of Luzzi-Marmi was answered affirmatively in \cite{ArnouxSchmidtCross}, using what they called ``Arnoux's transversal" to find the cross sections.  
   
Many generalizations of regular continued fractions have been studied.    Katok-Ugarcovici  \cite{KatokUgarcoviciStructure} introduced the family of $(a,b)$-continued fraction maps and determined the full subset of its two-dimensional parameter set for which matching occurs; see also    \cite{CIT,   KatokUgarcoviciApps}.   These continued fractions  are also associated to the modular group.    To each of the triangle Fuchsian groups known as the Hecke groups, \cite{DKS} associated a one-parameter family of continued fraction maps and began the study of their entropy functions; see also \cite{KraaikampSchmidtSmeets}. 

\subsection{Specific motivation, Two main results}   We call on the two main results of this paper in  \cite{CaltaKraaikampSchmidtPlanar}.
In \cite{CaltaKraaikampSchmidt}, we studied a one parameter family of piecewise M\"obius interval maps for each of a countably infinite number of triangle Fuchsian groups.    Although planar extensions are barely mentioned there, our paper was informed by numerous calculations of them.       As opposed to say the Nakada $\alpha$-continued fractions,  infinitely many of   the  maps considered in \cite{CaltaKraaikampSchmidt}  are {\em not expansive} maps.   A direct proof that each is {\em eventually expansive} seems tedious at best; this motivated us to seek   a general result that can be easily applied to deduce eventual expansivity.   We give such a result here as part of Theorem~\ref{t:evenExpanErgoNaturally}.    

One expects sufficiently nice continued fraction maps to be ergodic with respect to some measure which is absolutely continuous with respect to Lebesgue measure; the easiest setting to prove such results is when a Markov condition is fulfilled.   In the setting of \cite{CaltaKraaikampSchmidt}, and in many cases of continued fraction-like maps,  Markov properties do not hold.   In Definition~\ref{d:Def} below,  we introduce a property that is often fulfilled  in these settings.  That this property and basic finiteness conditions satisfied by a planar extension for a map then imply ergodicity and more is also given in Theorem~\ref{t:evenExpanErgoNaturally}.  As  an application, in \S~\ref{ass:egAlpOne} we show that each of an infinite collection of maps is ergodic.\\

We also study  a technique  used to date for solving for the planar extension of a piecewise M\"obius interval map beginning with such a planar extension for a sufficiently ``nearby" map.   This technique, called {\em quilting}, was introduced in \cite{KraaikampSchmidtSmeets},  and has its roots  in the discussion of the two-dimensional interpretation of ``insertion" and ``deletion" in the Ph.D. dissertation \cite{Kraaikamp}.    Theorem~\ref{t:finQuiltIsFine} shows that one can use quilting to prove that fundamental dynamical properties are 
shared between appropriately nearby systems.   We give applications of this in the setting of ``matching intervals" in \S~\ref{ss:FiniteQuiltingClose}.

One can thus pass from a system, say proven to have  properties by use of Theorem~\ref{t:evenExpanErgoNaturally}, to nearby systems and deduce that they also enjoy these properties.   In \S~\ref{s:AltNakErg} we show that this approach gives an alternate path to proving properties of the well-studied Nakada $\alpha$-continued fractions.    While pursuing this path, we discovered what seems to be an unnoticed symmetry within the planar natural extensions of these maps, see the introductory paragraph of \S~\ref{sss:RectangleAndFiberComplements} for more on this symmetry.

\phantom{here}\\  
\noindent
{\bf Convention}   Throughout, we will allow ourselves the minor abuse of using adjectives such as injective, surjective and bijective to mean in each case {\em up to measure zero}, and thus similarly where we speak of disjointness and the like we again will assume the meaning being taken to include the proviso ``up to measure zero" whenever reasonable.\\

\subsection{Outline} 
  In \S~\ref{s:back} we introduce basic terminology, notation and results from dynamical systems and ergodic theory;   review the settings for our examples and illustrative applications; and summarize further background material for Propositions~\ref{p:quiltFirstReturn} and ~\ref{p:quiltBernoulli}, which extend the second main result.  
 \S~\ref{s:BddNonFullRange} states and proves our first main result, Theorem~\ref{t:evenExpanErgoNaturally}, and gives an application.   \S~\ref{s:quilting} states and proves our second main result,  Theorem~\ref{t:finQuiltIsFine}, and related results.   \S~\ref{s:AltNakErg} gives an application of our results in the setting of the Nakada $\alpha$-continued fractions.   Whereas the use of our main results are straightforward, here the setting is admittedly technical.  We hope the reader will enjoy the rich details.

\subsection{Thanks}  It is a real pleasure to thank the referee for strongly recommending a clearer presentation, for mathematically helpful comments, and for additions to the bibliography.
 
\section{Background}\label{s:back}
  We collect standard background material in the  \S\S~\ref{ss:BasicsDynSys}, ~\ref{ss:planarBckgrd} and ~\ref{ss:ModGpSurfGeoFlo};  most of this material can be found in various textbooks, such as  \cite{DK, einWar, KatokBoek, KH, Petersen}.  For a different perspective on matters of \S~\ref{ss:planarBckgrd}, see various works of P.~Kůrka, such as the text \cite{KurkaBook}  the joint work \cite{KurkaKazda}.  In \S~\ref{ssHitoshi} we recall various results about Nakada's $\alpha$-continued fractions, both to illustrate the prior material and to use for motivation and application of the results of this paper.  With the same ends, we give brief summaries of  further results from the literature in the remaining portion of this section.

\subsection{Basics of dynamical systems}\label{ss:BasicsDynSys}   A {\em  dynamical system} is any  $(X, T, \mathscr B, \mu)$  where $X$ is a 
topological space $X$, $T:X\to X$ is a  function, $\mathscr B$ a sigma algebra, and $\mu$ a $T$-invariant measure on $\mathscr B$.  (In all that follows, we consider only Borel sigma algebras, unless stated otherwise.)  A dynamical system $(X, T, \mathscr B, \mu)$ is an {\em extension} of $(Y, S, \mathscr B', \nu)$ if there is a measurable map $\pi:X \to Y$ such that   there are sets of full measure $Y' \subset Y$ and $X'\subset X$ such that 
$S(Y') \subset Y'$ and $T(X') \subset X'$ and a measurable surjective map $\pi: X' \to Y'$ such that 
$\pi\circ T = S\circ \pi$ and $\mu \circ \pi^{-1} = \nu$.    We also say that the second system is a {\em factor} of the first.  The {\em natural extension} of a dynamical system was introduced by Rohlin, and defined by means of an inverse limit.  It is a minimal invertible extension in the sense that any invertible system which is an extension of $(Y, S, \mathscr B', \nu)$ is also an extension of it.   Naturally enough, the natural extension of a dynamical system is only well defined up to isomorphism; we will be most interested in planar extensions which give natural extensions, see \S~\ref{ss:planarBckgrd}.

The Kolmogorov--Sinai  measure theoretic entropy, which as stated above we refer to simply as entropy  and usually denote in the form $h(T)$ ---  is an invariant of a dynamic system, which roughly speaking measures its complexity.  In fact, Rohlin introduced the notion of the natural extension system to aid in the study of entropy  and  showed that the original system and its natural extension share entropy values.   

For a dynamical system $(X, T, \mathscr B, \mu)$ of finite measure and a subset $E \subset X$ of positive measure, the {\em induced transformation} on $E$ is  $T_E: E \to E$ given by $T_E(x) = T^k(x)$ where $k \in \mathbb N$ is minimal such that $T^k(x) \in E$.  (By the Poincar\'e Recurrence Theorem, the set of $x \in E$ such that there is some such $k$ has full measure in $E$, and in fact one defines $T_E$ to be the identity on the complement of this subset.)  We set $\mu_E$ to be the restriction to $E$ of $\mu$ scaled by $1/\mu(E)$.   This allows one to define a dynamical system for $T_E$.   The following is a key tool in the study of planar extensions. {\em Abramov's Formula}   states that the entropy of the induced system on $E$ is the quotient of the entropy of the original system divided by the measure of $E$, in short
\begin{equation}\label{e:AbramForm}
 h(T_E)= h(T)/\mu(E).
\end{equation}

One also has {\em Rohlin's Entropy Formula} for an interval map $T: \mathbb I \to \mathbb I$ which is ergodic with respect to an invariant probability measure $\nu$  and such that the derivative $T'$ exists $\nu$-almost everywhere, 
\begin{equation}\label{e:rohlinEnt}
h(T) = \int_{\mathbb I} \ln \vert\, T'(x)\,\vert\, d\nu.
\end{equation}

\subsection{Piecewise M\"obius maps, cylinders, planar extensions}\label{ss:planarBckgrd}   The group $\text{GL}(2, \mathbb R)$ of invertible integral $2 \times 2$ matrices acts by way of M\"obius transformations on the Riemann sphere, that is on the set of complex numbers union a point denoted $\infty$.  To wit, for  
\begin{equation}\label{e:justM}
M = \begin{pmatrix} a&b\\c&d\end{pmatrix}
\end{equation}
 in $\text{GL}(2, \mathbb R)$    and $z \in \mathbb C$, one has $M\cdot z = \frac{az+b}{cz+d}$.    We use mainly the restriction of this action to the real numbers.   Note that the action is projective in the sense that we can and do restrict to the case of $\det M = \pm 1$.     Those $M$ of determinant equal to $1$ form $\text{SL}(2, \mathbb R)$.  Taking the quotient  by its center $\{\pm \text{I}\}$, we obtain 
the group $\text{PSL}(2, \mathbb R)$.   

The following notation is perhaps slightly inelegant, but we will find it useful.    Let $\text{SL}^{\pm 1}_2(\mathbb R)$ denote the subgroup of $\text{GL}_2(\mathbb R)$ comprised of those elements whose determinant is $1$ or $-1$, and let $\text{PSL}^{\pm 1}_2(\mathbb R)$ be its quotient by $\{\pm I\}$.   Then,   $\text{PSL}^{\pm 1}_2(\mathbb R)$ contains $\text{PSL}_2(\mathbb R)$ as a subgroup of index two.  It is a standard abuse to represent an element $[M]  = \pm M \in \text{PSL}^{\pm 1}_{2}(\mathbb R)$ simply by $M \in \text{SL}^{\pm 1}_{2}(\mathbb R)$.

A {\em piecewise M\"obius interval map} is a function $T$ on a subinterval $\mathbb I \subset \mathbb R$ with values in $\mathbb I$ such that  
there is a partition $\mathbb I= \cup_{\beta \in \mathcal B}\, K_\beta$ with  $T(x) = M_{\beta}\cdot x$ for all $x \in K_\beta$.    We will assume that each $K_{\beta}$ is an interval and is taken as large as possible.   We call these $K_\beta$ the (rank one) {\em cylinders} for $T$.   Similarly, a cylinder of rank $m>1$ is the largest interval on which $T^m$ is given by the action of some $M_{\beta_m}\cdots M_{\beta_1}$.    We say that the  cylinder $K_{\beta}$ is {\em full} if $T(K_{\beta}) = \mathbb I$.  Naturally enough, any cylinder which is not full is called  {\em non-full}.

The standard number theoretic planar map associated to a M\"obius transformation $M$ is 
\[
\mathcal{T}_M(x,y) :=  \bigg( M\cdot x,  RMR^{-1}\cdot y\,\bigg)\, \quad  \text{for}\;\; x \in \mathbb R\setminus \{M^{-1}\cdot \infty\},\ y \in \mathbb R\setminus \{(RMR^{-1})^{-1}\cdot \infty\}\,,
\]
where 
\begin{equation}\label{e:justR}
R = \begin{pmatrix}0&-1\\1&0\end{pmatrix}.
\end{equation}
Thus,   $\mathcal{T}_M(x,y) = (\,M\cdot x, -1/(M\cdot (-1/y))\,)$.
 As mentioned in the introduction, an elementary Jacobian matrix calculation verifies that the measure  $\mu$ on $\mathbb R^2$ given by
\begin{equation}\label{e:muDefd}
d\mu = \dfrac{dx\, dy}{(1 + xy)^2}
\end{equation}
is (locally) $\mathcal{T}_M$-invariant.  

 For  a piecewise M\"obius interval map  $T$ we then set  
\begin{equation}\label{e:2DlocMap}
\mathcal{T}(x, y) = \mathcal{T}_{M_{\beta}}(x,y)    \quad  \text{whenever}\;\; x \in K_\beta,\ y \in \mathbb R\setminus \{N^{-1}\cdot \infty\}\,.
\end{equation}
Suppose that  $\Omega \subset \mathbb R^2$ projects onto the interval $\mathbb I$ and is a domain of bijectivity  of $\mathcal T$,  that is $\mathcal T$ is bijective on $\Omega$ up to $\mu$-measure zero.  Let $\mathcal B$ be the Borel algebra of $\Omega$, we then call the system $(\mathcal T, \Omega, \mathcal B, \mu)$  a {\em planar extension} for $T$.   We will occasionally abuse this terminology and say that $\Omega$ or $\mathcal T$ is the planar extension.  Similarly, we will make occasional use of the words   $T$ is of  {\em positive planar extension} to mean that $T$ has a planar extension with $0<\mu(\Omega)<\infty$.

We will have occasional need to use a planar extension of $T$ for which the invariant measure is Lebesgue measure.   Let 
\begin{equation} \label{eqZmap}
\mathcal Z(x,y) = (x, y/(1+xy))
\end{equation}
and  for each $M= \begin{pmatrix} a&b\\c&d\end{pmatrix}$ as above, let  $\widehat{\mathcal T}_M = \mathcal Z \circ \mathcal T_M\circ \mathcal Z^{-1}$.  Then 
\begin{equation} \label{eqLebMap}
\widehat{\mathcal T}_M(x,y) = ( M\cdot x, \, \det M  (\,(c x+d)^2 y - c (cx + d)\,)\,).
\end{equation} 
 We claim that Lebesgue measure is $\widehat{\mathcal T}_M$-invariant.  The derivative with respect to $x$ of $x \mapsto M\cdot x$ equals $\det M/(cx + d)^2$.  This also equals the multiplicative inverse of the  partial derivative with respect to $y$ of $\det M\,( (c x+d)^2 y - c (cx + d)\,)$.  An elementary Jacobian matrix calculation hence verifies that Lebesgue measure on $\mathbb R^2$ is (locally) $\widehat{\mathcal T}_M$-invariant.         See Figure~\ref{f:standVsLebesgue} for an indication of the effect of $\mathcal Z$.
 
 Suppose that some $T$ with its planar natural extension   $(\mathcal T, \Omega, \mathcal B, \mu)$ is given.  We let $\widehat{\mathcal{T}}(x, y) = \mathcal Z \circ \mathcal T \circ \mathcal Z^{-1}$, thus $\widehat{\mathcal{T}}$  is given piecewise by various   $\widehat{\mathcal T}_M$ on the domain of bijectivity $\Sigma = \mathcal Z(\Omega)$.   We call the corresponding dynamical system the {\em Lebesgue planar extension} of $T$.   See Figure~\ref{f:standVsLebesgue} for a view of both types of planar extensions for a particular map.
 
 \subsection{Nakada's $\alpha$-continued fractions}\label{ssHitoshi}  The Nakada $\alpha$-continued fractions form a one-parameter family of piecewise M\"obius interval maps. 
 
  For $\alpha \in [0,1]$, we let $\mathbb{I}_\alpha := [{\alpha-1}, \alpha]$. Then {\em Nakada's $\alpha$-continued fraction} map is defined as  $T_{\alpha}:\, \mathbb{I}_\alpha \to [{\alpha-1}, \alpha)$ by
\[
T_{\alpha}(x) := \bigg| \frac{1}{x} \bigg| -  \bigg\lfloor\,  \bigg| \frac{1}{x} \bigg| + 1 -\alpha \bigg\rfloor \qquad (x \neq 0),
\]
$T_{\alpha}(0) :=0$.
For $x \in \mathbb{I}_\alpha$, put
\[
\varepsilon(x) := \left\{\begin{array}{cl}+1 & \mbox{if}\ x \ge 0\,, \\ -1 & \mbox{if}\ x<0\,,\end{array}\right. \quad \mbox{and} \quad d_{\alpha}(x) := \bigg\lfloor \bigg| \frac{1}{x} \bigg| + 1 - \alpha \bigg\rfloor,
\]
with $d_\alpha(0) = \infty$.

    The cylinder $\Delta_{\alpha}(\varepsilon, d)$ is the set of $x$ such that   $(\varepsilon(x), d_{\alpha}(x)\,) = (\varepsilon, d)$.   Let  
    \[M_{(\varepsilon:d)} = \begin{pmatrix} -d& \varepsilon\\1&0  \end{pmatrix},\]
so  that $T_{\alpha}(x) = M_{(\varepsilon:d)} \cdot x$ on $\Delta_{\alpha}(\varepsilon, d)$.   (We will usually ignore the exceptional cylinder $\Delta_{\alpha}(+1, \infty)$ which contains only $x=0$.)
 Note that the only endpoints of any cylinder which $T_{\alpha}$  could possibly send to an interior point of $\mathbb I_{\alpha}$ are $\alpha-1$ or $\alpha$.   Thus, of the  infinitely many cylinders at most two cylinders, those of $\alpha-1$ or $\alpha$,  can be non-full. Furthermore, the  fullness or non-fullness of each of these two depends   only on the image of $\alpha-1$ or $\alpha$, respectively.   See Figure~\ref{f:nakadaCfPt39}.

\begin{figure}[h]
\scalebox{.6}{
{\includegraphics{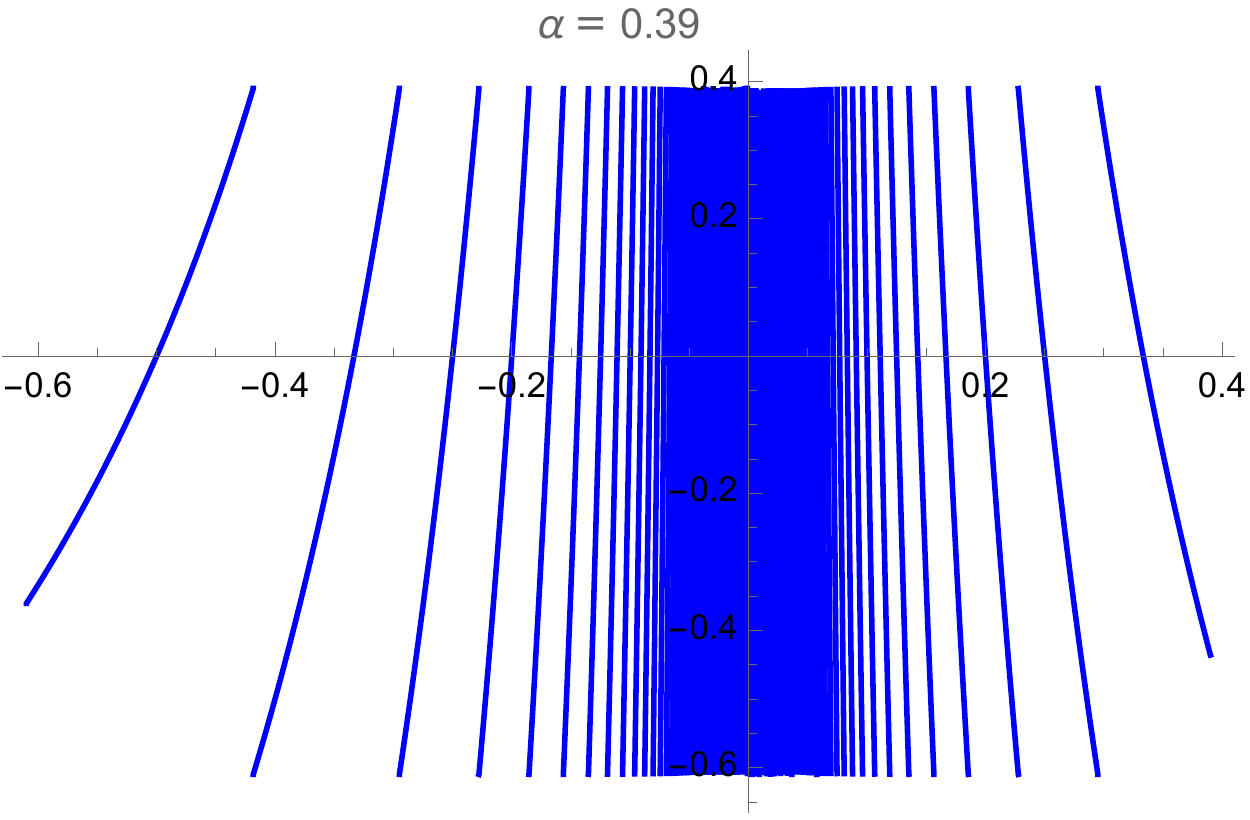}}}
\caption{Approximate graph of Nakada's $T_{0.39}$.  (Aspect ratio of the plot unequal to 1 for aesthetic reasons only.)  
Each cylinder corresponds to a branch of the graph. Notice that the two extreme cylinders are not full;  for each, this is due to exactly one endpoint of the cylinder having image in the interior of the interval of definition.  See Figure~\ref{f:NakadaCFpt39NatExt} for the planar extension of this function.}
\label{f:nakadaCfPt39}
\end{figure}

Furthermore, let
\[
\varepsilon_n = \varepsilon_{\alpha,n}(x) := \varepsilon(T^{n-1}_\alpha (x)) \quad \mbox{and}\quad d_n = d_{\alpha,n}(x) := d_\alpha(T^{n-1}_\alpha (x)) \qquad (n \ge 1).
\]
This yields the  $\alpha$-continued fraction expansion of $x \in \mathbb{R}$\,:
\[
x = d_0+ \dfrac{\varepsilon_1}{d_1 + \dfrac{\varepsilon_2}{d_2+\cdots}}\,, 
\]
where $d_0 \in \mathbb{Z}$ is such that $x-d_0 \in [{\alpha-1}, \alpha)$.
Note that when $\alpha=0$ this recovers the regular continued fractions, as mentioned in the introduction.   The collection of all finite words in the $(\varepsilon:d)$ which arise in the expansions of any $x \in \mathbb I_{\alpha}$ form the language $\mathcal L_{\alpha}$ for $T_{\alpha}$; any word in $\mathcal L_{\alpha}$ is called $\alpha$-{\em admissible}.  

 Also as indicated in the introduction,  a matching interval of parameter $\alpha$ values is an interval such that $T^{k}_{\alpha}(\alpha) = T^{k'}_{\alpha}({\alpha-1})$ for pairs of positive integers $(k,k')$ for all $\alpha$ in the interval.  Both \cite{CT} and  \cite{KraaikampSchmidtSteiner}  established that the complement in $[0,1]$ of the union of the matching intervals is a set of measure zero.  This complement is the {\em exceptional set}, denoted $\mathcal E$. 
 
For $\varepsilon$ and $d$ as above, let  $N_{(\varepsilon:d)} = \begin{pmatrix} 0& 1\\ \varepsilon &d  \end{pmatrix}$.     Note that  projectively,  $N_{(\varepsilon:d)} = (M_{(\varepsilon:d)}^{-1})^t = R M_{(\varepsilon:d)} R^{-1}$.  Define $\mathcal T_{(\varepsilon:d)}$ to be the map $(x,y) \mapsto (M_{(\varepsilon:d)}\cdot x, N_{(\varepsilon:d)}\cdot y)$.   Thus for $x \in \Delta_{\alpha}(\varepsilon:d)$ and any $y$ we have  $\mathcal T_{\alpha}(x,y) =\mathcal T_{(\varepsilon:d)}(x,y)$ in accordance with \eqref{e:2DlocMap}.

\subsection{Modular group and surface, geodesic flow, Fuchsian groups}\label{ss:ModGpSurfGeoFlo}
The upper half-plane is $\mathbb H= \{z=x+iy \,\vert\, x,y \in \mathbb R, y>0\}$ as a subset of the complex numbers.   The M\"obius action of  the group $\text{SL}(2, \mathbb R)$ preserves $\mathbb H$.  Indeed its elements act as isometries when we place the hyperbolic metric on $\mathbb H$, whose element of arclength squares to be $ds^2 = (dx^2 + dy^2)/y^2$.     The geodesics of $\mathbb H$ are either vertical lines and semi-circles, whose naive extensions meet the boundary real line perpendicularly.

For simplicity, let us say that a {\em unit tangent vector} on $\mathbb H$ is $u = (z, \theta)$ where $z\in \mathbb H$ and $\theta$ denotes a direction.   There is a unique hyperbolic geodesic passing through $z$ with tangent line of the given direction.  The collection of all of the unit tangent vectors is called the {\em unit tangent bundle}, $T^1 \mathbb H$.   One can extend the action of $\text{SL}_2(\mathbb R)$ on $\mathbb H$ to an action on $T^1 \mathbb H$.   The action is transitive and the stabilizer of the vertical unit tangent vector of basepoint $z=i$ is $\pm I$;  this  allows one to identify $T^1 \mathbb H$ with $\text{PSL}_2(\mathbb R)$.    

The {\em geodesic flow} on the unit tangent bundle is an action of the real numbers: given a nonnegative real number $t$ and a unit tangent vector $u$, the unit tangent vector   $t\cdot u$ is obtained by following the unique geodesic passing through $u$ in the positive direction for arclength $t$ and taking the unit tangent vector to this oriented geodesic at the new basepoint.  If $t<0$, we follow the geodesic in the opposite direction.    In terms of $\text{PSL}_2(\mathbb R)$, the geodesic flow for time $t$  is given by  sending $A\in \text{PSL}_2(\mathbb R)$ to $A g_t$,   where $g_t = \begin{pmatrix} e^{t/2}&0\\0&e^{-t/2}\end{pmatrix}$.

  A {\em Fuchsian group} $\Gamma$ is a discrete (with respect to the natural topology) subgroup of $\text{PSL}(2, \mathbb R)$ and in particular, it is a subgroup acting properly discontinuously on $\mathbb H$;  the quotient $\Gamma\backslash \mathbb H$ is a surface (or orbifold) and inherits a hyperbolic metric.  
  
  The quotient $\Gamma\backslash \mathbb H$ has a unit tangent bundle, given by equivalence classes (thus $\Gamma$-orbits) of unit tangent vectors of $\mathbb H$.  The geodesic flow descends so that for $t\in \mathbb R$,    $[A] \in \Gamma\backslash \text{PSL}_2(\mathbb R)$ is sent to $[A g_t]$, where the square brackets here denote $\Gamma$-cosets represented by the given elements.

  The  {\em modular group} is $\text{PSL}(2, \mathbb Z)$.   The {\em modular surface} is $\Gamma\backslash \mathbb H$ with $\Gamma = \text{PSL}(2, \mathbb Z)$.   The modular group is in particular a {\em triangle Fuchsian group}, of finite covolume. 
The {\em signature} of this Fuchsian group is $(0; 2,3, \infty)$, indicating that the modular surface is of genus zero and has quotient singularities of orders 2 and 3, and has a cusp: the modular surface is  a punctured sphere with the puncture being at infinite hyperbolic distance from the other points.

\subsection{Arnoux's method for cross sections to a geodesic flow}\label{ss:ArnouxMethod}
The following material is called upon in    \S~\ref{ss:quiltRealFirstReturn}.   
  
\subsubsection{Cross sections to a measurable flow,  Arnoux's transversal}\label{sss:crossFlowArnouxTrans}        
Let $(X, \mathscr B, \mu)$ be a measure space and $\Phi_t$ a measure preserving flow on $X$,  that is $\Phi:  X \times \mathbb R \to X$ is a measurable function such that for $\Phi_t(x) = \Phi(x,t)$, $\Phi_{s+t} = \Phi_s \circ \Phi_t$.   Then $\Sigma \subset X$ is a  {\em measurable cross section} for  the flow $\Phi_t$ if:     (1)  the flow orbit of almost every point meets $\Sigma$; (2) for almost every $x\in X$ the set of times $t$ such that $\Phi_t(x) \in \Sigma$ is a discrete subset of $\mathbb R$; (3)   for every Borel subset (in the subspace topology)   $A \subset \Sigma$ and for every  $\tau> 0$, {\em flow box}  $A_{[0,\tau]}\ :=  \{ \Phi_t(A)\, \mid \, A \in \Sigma,  t \in [0, \tau]\,\}$ is $\mu$-measurable.  

The {\em return-time function} $r= r_\Sigma$ is $r(x) =\inf\{t>0:\Phi_t(x)\in\Sigma\}$ and the {\em return map} $R:\Sigma\to\Sigma$ is defined by $R(x)= \Phi_{r(x)}(x)$. 
The {\em induced measure} $\mu_\Sigma$ on $\Sigma$ is defined from flow boxes: one sets 
$\mu_\Sigma(A)= \frac{1}{\tau} \mu(A_{[0,\tau]})$ for any  $0<\tau<\inf_{x\in A}\{r(x)\}$.

\medskip

\noindent
{\bf Convention}:  In all that follows,  we write {\em cross section} to denote measurable cross-section.
 \smallskip

A flow $\Phi_t$ is {\em ergodic} if for any invariant set either it or its complement is of measure zero.    
  The result of Hopf mentioned in the introduction states that if a Fuchsian group $\Gamma$ is of finite covolume, then the geodesic flow is ergodic with respect to the natural measure on the unit tangent bundle of $\Gamma\backslash \mathbb H$; see his reprisal in \cite{Hopf}.    A flow is {\em recurrent} if the $\Phi$-orbit of almost every point meets any positive measure set infinitely often.  By the Poincar\'e Recurrence Theorem, an ergodic flow on a finite measure space is recurrent.    Given a cross-section, a first return-time transformation  $(\Sigma, \mathscr B_\Sigma, \mu_\Sigma, R_\Sigma)$ is ergodic  whenever the flow is.    
  
 There is a natural measure on the unit tangent bundle $T^1 \mathbb H$:  the  {\em Liouville measure} is given  as the product of the hyperbolic area measure on $\mathbb H$ with the length measure on the circle of unit vectors at any point.   Liouville measure is (left- and right-) $\text{SL}_2(\mathbb R)$-invariant,  and  thus gives Haar measure on $G$.   In particular, this measure is invariant for the geodesic flow.     With standard normalizations, Liouville measure agrees with the Riemannian volume form.  These both descend modulo any Fuchsian group $\Gamma$.   For example, the volume of the unit tangent bundle of the modular surface is $\pi^2/3$.

  Arnoux, see say \cite{ArnouxSchmidtCross},  found an elementary manner to map Lebesgue planar extensions into the unit tangent bundle of appropriate surfaces so as to find cross sections to the geodesic flow.   This often allows one to express   an original interval map's system as a factor.  For $(x,y) \in \mathbb R^2$, let 
\[ A(x,y) =   \begin{pmatrix} x&xy-1\\1&y\end{pmatrix}.\]  
  {\em Arnoux's transversal}, $\mathscr A$, is the projection to $\text{PSL}_2(\mathbb R)$ of the set of all  $A(x,y)$. 
  The complement to   $\{[A g_t] \,\vert\, A\in \mathscr A, t \in \mathbb R\}$  is a null set for  Liouville measure.  Furthermore Liouville measure restricts to   $\mathscr A$ to be a constant multiple of $dx\,dy\, dt$.     

 Given $M\in \text{SL}^{\pm 1}_2(\mathbb R)$ of the form  \eqref{e:justM}, let  $\tau(M, x) = 2 \ln \vert cx + d\vert$ for any $x \neq -d/c$ (if $c \neq 0$).  The function $\tau(M, x)$ descends to be independent of representative $M$ for an element of $\text{PSL}^{\pm 1}_2(\mathbb R)$.    In what follows, we choose the representative $M$  such that $\vert cx + d\vert =   cx + d$.    For ease of legibility, set $t_0 =  \tau(M,x)$, $A = A(x,y)$ and $A' = A(\,\widehat{\mathcal T}_M(x,y)\,)$.    Then 
\begin{equation}\label{e:arnouxFlow}  M A  g_{t_0} = \begin{cases} A'&\text{if}\; \det M = 1;\\
\\
                                                         A' \;U &\text{if}\; \det M = -1,
                                  \end{cases}
\end{equation}                                 
where  $U = \begin{pmatrix} 1&0\\0&-1\end{pmatrix}$.

  Given a Fuchsian group $\Gamma$, we have the function
\[
\begin{aligned}
 \mathscr P =  \mathscr P_{\Gamma}: \mathbb R^2 &\to \Gamma\backslash\text{PSL}(2, \mathbb R)  \\
               (x,y) &\mapsto  [A(x,y)],
\end{aligned}               
 \]
 where again square brackets denote cosets.   Note that this is measure preserving when we use Lebesgue measure on $\mathbb R^2$ and the measure given by $dx\,dy$ inherited by  the projection $ \mathscr A_{\Gamma}$, of $\mathscr A$ to $\Gamma\backslash\text{PSL}(2, \mathbb R)$.   When we use $\mathscr P$ we will often commit the abuse of writing $A$ to represent the corresponding coset.

\subsubsection{Arnoux's method in the determinant one setting}    
  Now suppose that $T$ is a piecewise M\"obius interval map.  
 We say that the {\em group associated to $T$} is the group $\widehat{\Gamma}_T$ generated by the M\"obius transformations of $T$; thus,    $\widehat{\Gamma}_T = \langle M_\beta, \beta \in \mathcal B  \rangle \subset \text{PSL}^{\pm 1}_2(\mathbb R)$.  We let $\Gamma_T = \widehat{\Gamma}_T \cap \text{PSL}_2(\mathbb R)$, this is a subgroup of index $w_T \in \{1,2\}$.   In particular,  if $\det M_{\beta}=1$ for all $\beta$, then $w_T= 1$; we call this the {\em determinant one setting}.      
 
 If $T$ has a positive planar extension, we conjugate via $\mathcal Z$ to the Lebesgue planar extension of $T$.  We then apply the measure preserving $\mathscr P$, with $\Gamma = \Gamma_T$, to $\mathcal Z (\Omega)$.  When $\Gamma_T$ is a Fuchsian group, this is a subset of $T^1(\Gamma_T\backslash \mathbb H)$;  using the fact that any Fuchsian group has countably many elements,  in \cite{ArnouxSchmidtCommCF} it is shown that $\mathscr P$ is injective up to measure zero on $\mathcal Z (\Omega)$. 
 
 Arnoux's method in the determinant one setting is illustrated by the following result.   When  $T$ is a piecewise M\"obius interval map and $x \in \mathbb I$, we let $\tau(x) = \tau_T(x) = \tau(M, x)$ where $T(x) = M\cdot x$.    The result here combines (\cite{ArnouxSchmidtCommCF} Theorem~5.4, Corollary~1, and Proposition~4). 
\begin{Thm}\label{t:detOneSetting} [Arnoux's Method] Let $T$ be a piecewise M\"obius interval map with positive planar extension, 
that each of M\"obius transformations giving $T$ is of determinant one, so   $\widehat{\Gamma}_T= \Gamma_T$ and that this is Fuchsian group  of finite covolume.   Then 
\[ \Sigma = \mathscr P_{\Gamma}(\,\mathcal Z (\Omega)\,)\]
is a cross section to the geodesic flow on $T^1(\Gamma_T\backslash \mathbb H)$.  Furthermore,  
the system defined by 
\[\begin{aligned}
\phi: \Sigma \;\;\;&\to \;\Sigma\\
  {}      [A(x,y)] &\mapsto [M A(x,y) g_{\tau(x)}],
 \end{aligned}
 \]
 with $M$ such that $T(x) = M\cdot x$, 
 is an extension of $T: \mathbb I \to \mathbb I$.    Moreover, $\phi$ agrees with the first return map of the geodesic flow to $\Sigma$  if and only if $T$ is ergodic, eventually expansive,  and with entropy satisfying $h(T) \mu(\Omega_f) =  \emph{vol}(\,   T^1( \Gamma_T\backslash \mathbb H)\, )$.  When this holds, the   first return to $\Sigma$ gives a natural extension to $T$. 
\end{Thm}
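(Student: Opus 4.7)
The plan is to verify the three assertions sequentially, relying on equation~\eqref{e:arnouxFlow} throughout as the bridge between $\widehat{\mathcal T}$ on $\mathcal Z(\Omega)$ and the geodesic flow on the quotient, and combining it at the end with Kac's formula together with Rohlin's formula~\eqref{e:rohlinEnt}. To show $\Sigma$ is a cross section, I would begin with the fact (from the preceding development) that Arnoux's transversal $\mathscr A$ meets almost every flow orbit in $T^1\mathbb H$ and that $\Gamma$ acts by countably many isometries commuting with the flow, so that $\mathscr A_\Gamma$ descends to a measurable cross section of $T^1(\Gamma \backslash \mathbb H)$; since $\mathscr P$ is injective almost everywhere on $\mathcal Z(\Omega)$, $\Sigma = \mathscr P(\mathcal Z(\Omega))$ is well-defined as a measurable subset of $\mathscr A_\Gamma$. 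To see that almost every flow orbit in $T^1(\Gamma \backslash \mathbb H)$ meets $\Sigma$ (not merely $\mathscr A_\Gamma$), the point is that $\mathcal Z(\Omega)$ is $\widehat{\mathcal T}$-invariant and \eqref{e:arnouxFlow} identifies each $\widehat{\mathcal T}$-iterate with a geodesic-flow excursion returning to $\Sigma$; combined with recurrence from ergodicity of the flow (Hopf's theorem, valid since $\Gamma$ has finite covolume), this forces full-measure coverage. Discreteness of return times and measurability of flow boxes are routine consequences of the discreteness of the $\Gamma$-action.

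For the extension claim, I would take the projection $\pi\colon\Sigma \to \mathbb I$ defined by $\pi([A(x,y)])=x$. The determinant-one instance of \eqref{e:arnouxFlow} gives $MA(x,y)g_{\tau(x)} = A(\widehat{\mathcal T}_M(x,y))$, hence $\phi([A(x,y)]) = [A(\widehat{\mathcal T}_M(x,y))]$, and applying $\pi$ recovers $M\cdot x = T(x)$; thus $\pi \circ \phi = T \circ \pi$. Lebesgue measure on $\mathcal Z(\Omega)$ is $\widehat{\mathcal T}$-invariant and pushes forward under the measure-preserving $\mathscr P$ to the induced measure on $\Sigma$, whose marginal under $\pi$ is an absolutely continuous $T$-invariant measure on $\mathbb I$.

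The biconditional is the crux. By construction $\phi$ flows along the geodesic for time $\tau(x)$ and so is always \emph{some} return map; the question is whether it is the \emph{first} return. Kac's formula applied to the ergodic geodesic flow yields $\int_\Sigma r_\Sigma \, d\mu_\Sigma = \operatorname{vol}(T^1(\Gamma \backslash \mathbb H))$, and pointwise $|\tau| \ge r_\Sigma$ with equality characterising $\phi$ as first return. Meanwhile, when $T$ is ergodic (for its absolutely continuous invariant probability measure) and eventually expansive, Rohlin's formula~\eqref{e:rohlinEnt} applies, and using $|T'(x)| = 1/(cx+d)^2$ in the determinant-one case together with the sign convention fixing $|cx+d|=cx+d$ converts it to $h(T)\,\mu(\Omega_f) = \int_{\Omega_f} |\tau(x)|\, dx\,dy$. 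Comparison yields $h(T)\,\mu(\Omega_f) \le \operatorname{vol}(T^1(\Gamma \backslash \mathbb H))$ with equality if and only if $\phi$ is the first return, giving the biconditional. The final assertion about the natural extension then follows from the general principle that the first return to a cross section of an ergodic finite-measure flow is automatically invertible and ergodic, and that any invertible extension of $T$ compatible with the flow factors through $(\Sigma, \phi)$, so $\phi$ realises the minimal invertible extension.

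The hard part will be the forward implication of the biconditional, namely showing that the three stipulated properties together really force Kac's bound to be attained; in particular, one must clarify the precise role of \emph{eventual} expansivity (rather than mere expansivity) in rendering Rohlin's formula available, and carefully track the orientation/sign of $\tau(x)$ so that it indeed corresponds to a genuine forward-time return under the chosen representative convention.
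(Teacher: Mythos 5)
Your overall route is the one the paper itself takes: use \eqref{e:arnouxFlow} to identify $\phi$ with $\widehat{\mathcal T}$ on $\mathcal Z(\Omega)$, invoke Hopf's ergodicity so that the flow-saturation of $\Sigma$ fills $T^1(\Gamma_T\backslash\mathbb H)$ up to measure zero (cross section and extension claims), and settle the biconditional by comparing $\int \tau$, evaluated via Rohlin's formula \eqref{e:rohlinEnt}, with the integral of the first-return time, which equals the volume. However, there are genuine gaps. Your Kac--Rohlin comparison only proves half of the stated equivalence: since you must assume $T$ ergodic (to invoke Rohlin) and eventually expansive before comparing, what you actually obtain is ``given ergodicity and eventual expansivity, the entropy identity holds iff $\phi$ is the first return map.'' The converse direction, that $\phi$ being the first return map forces $T$ to be ergodic and \emph{eventually expansive}, is never addressed: the paper gets ergodicity by pushing Hopf's ergodicity of the flow through the first-return system and down the factors $\phi\to\widehat{\mathcal T}\to\mathcal T\to T$, and gets eventual expansivity together with the final natural-extension claim from the expanding-in-$x$/contracting-in-$y$ structure of the section map, citing \cite{ArnouxSchmidtCommCF}. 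Your justification of the natural-extension claim --- ``any invertible extension of $T$ compatible with the flow factors through $(\Sigma,\phi)$'' --- is not the right criterion: minimality requires $(\Sigma,\phi)$ to be a factor of \emph{every} invertible extension of $T$, with no flow-compatibility restriction, and establishing that is exactly what the hyperbolicity argument is for.

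Second, the quantitative step has a direction/sign problem. Your own chain ($h(T)\mu(\Omega_f)=\int|\tau|$, pointwise $|\tau|\ge r_\Sigma$, and $\int r_\Sigma=\operatorname{vol}$) yields $h(T)\mu(\Omega_f)\ge\operatorname{vol}(T^1(\Gamma_T\backslash\mathbb H))$, not the ``$\le$'' you wrote; the paper records precisely the ``$\ge$'' inequality, with strictness exactly when a positive-measure set of points first agrees with $\phi$ only at a later return. Moreover, both the identity $h(T)\mu(\Omega_f)=\int|\tau|$ and the pointwise bound $|\tau|\ge r_\Sigma$ presuppose that the forward flow time is nonnegative almost everywhere, i.e.\ $\ln|T'|\ge 0$ a.e.; eventual expansivity alone does not give this, and where the flow time is negative the forward orbit need not return at time $|\tau|$ at all, so the pointwise comparison breaks down there. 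You flag ``tracking the sign of $\tau$'' as remaining work, which is honest, but as written the inequality on which the whole biconditional rests is stated in the wrong direction and its pointwise input is unjustified.
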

The initial statement of the theorem is shown by considering  \eqref{e:arnouxFlow} with the projection.    The subset $\{[A(x,y) g_t] \,\vert\, A(x,y) \in \Sigma, 0\le t\le \tau(x)\} \subset T^1(\Gamma_T\backslash \mathbb H)$ is invariant under the geodesic flow;  Hopf's result implies that this is all of $T^1(\Gamma_T\backslash \mathbb H)$ up to measure zero.       If $\phi$ agrees with the first return map,  then the map to $\mathscr A$ is such that the flow is expansive in the $x$-direction and contracting in the $y$-direction ---  recall that the map of \eqref{eqLebMap} preserves Lebesque measure  ---  and one can argue as in \cite{ArnouxSchmidtCommCF} that the first return system is indeed the natural extension.   The ergodicity of the flow implies that $\phi$ is ergodic, it then follows that $\widehat{\mathcal T}, \mathcal T$ and hence $T$ itself are ergodic.   The veracity of the equation involving the entropy is also in  \cite{ArnouxSchmidtCommCF}; in brief,  $\tau(x)$ is simultaneously  the arclength of the geodesic path following the flow line from $ [A(x,y)]$ to its image under $\phi$ and the (piecewise form of the) integrand in Rohlin's formula \eqref{e:rohlinEnt}. Recall that $\nu$ is the marginal measure of $\mu$ on $\Omega$, and that both $\mathcal Z$ and $\mathcal P_{\Gamma}$ are measure preserving.\\ 

In fact, even if $\phi$ is not given by the first return map, if $T$ is ergodic we can use Rohlin's formula and find that 
$h(T) \mu(\Omega_f)  \ge  \text{vol}(\,   T^1( \Gamma_T\backslash \mathbb H)\, )$.     Strict inequality holds exactly when there is a positive measure set of points in $\Sigma$ whose flow paths return to first agree with $\phi$ only  for some  $n^{\text{th}}$ return with $n>1$.

\subsubsection{Arnoux's method in the mixed determinant setting} 
One can extend the above results to the setting where not all $M= M_{\beta}$ are of determinant one. 

Let $\Sigma_+ = \mathscr P_{\Gamma}(\,\mathcal Z (\Omega)\,)$, thus equalling $\Sigma$ as above.    Fix $D\in \text{SL}^{\pm 1}_2(\mathbb R)$ of determinant $-1$, then let $\Sigma_{-} = \{[DAU] \,\vert\, A = A(x,y) \in \mathscr A \,\text{with} \,(x,y) \in \mathcal Z(\Omega)\}$.     Since Haar measure is both left- and right-multiplication invariant, we may and do assume that $\Sigma_{-}$ has the Lebesgue measure in terms of $x, y$.    

As usual, assume that $T(x) = M\cdot x$.  We define maps  under the restriction that $\det M = 1$
\[\begin{aligned}
    \alpha_M: \Sigma_+ &\to \Sigma_+\phantom{pleasealign}\text{and} \;\;&\beta_M: \Sigma_{-} &\to \Sigma_{-}\\
     [A(x,y)] &\mapsto [M A(x,y) g_{\tau(x)}]\;\;\;& [DA(x,y)U] &\mapsto [DMA(x,y) g_{\tau(x)}U]\,.
 \end{aligned}
 \]                     
When   $\det M = -1$ we define maps  
\[\begin{aligned}
    \gamma_M: \Sigma_{+} &\to \Sigma_-\phantom{pleasealign}\text{and} \;\;&\delta_M: \Sigma_{-} &\to \Sigma_{+}\\
     [A(x,y)] &\mapsto [DMA(x,y) g_{\tau(x)}]\;\;\;& [DA(x,y)U] &\mapsto [MA(x,y)U g_{\tau(x)}]\,.
 \end{aligned}
 \]                     
That these maps then do take values in the set indicated follows from considering  \eqref{e:arnouxFlow} with the projection $\mathscr P_{\Gamma}$, and the facts  that the diagonal matrices  $U$ and $g_t$ commute and finally that  $U^2$ is the identity.   Note that $DMA(x,y) g_{\tau(x)}U =   DM D^{-1} \, DA(x,y)U\, g_{\tau(x)}$, thus $\beta_M$ is given by left multiplication by a determinant one matrix and a right multiplication by a determinant one diagonal matrix.  Similarly,  the left multiplying matrices for the maps $\gamma_M$ and $\delta_M$ are  $DM$ and $MD^{-1}$, respectively.

\begin{Thm}\label{t:detPlusMinOneSetting}[Arnoux's Method, 2] Let $T$ be a piecewise M\"obius interval map with positive planar extension,    with  $\Gamma_T$ a Fuchsian group  of finite covolume and $\Gamma_T \neq\widehat{\Gamma}_T$\,.  Suppose  $D \in \text{PSL}^{\pm 1}(\mathbb Z)$ is such  that (1)  $\Sigma_{+} \cap \Sigma_{-}$ is a null set;  (2)  $\forall M\in \Gamma_T$   also $DM D^{-1}\in \Gamma_T$; and, (3) $\forall M\in \widehat{\Gamma}_T \setminus \Gamma_T$ one has $DM\in\Gamma_T$.   
Then $\Sigma = \Sigma_{+} \cup \Sigma_{-}$ 
is a cross section to the geodesic flow on $T^1(\Gamma_T\backslash \mathbb H)$.  Furthermore,  
the system defined by 
\[\begin{aligned}
\psi: \Sigma  \;\;\;&\to \;\Sigma \\
          \sigma = \sigma(x,y) &\mapsto  \begin{cases}\alpha_M(\sigma)&\text{if}\;  \sigma \in \Sigma_{+}\, \text{and } \det M = 1;\\
                                                                                   \beta_M(\sigma)&\text{if}\;  \sigma \in \Sigma_{-}\, \text{and } \det M = 1;\\
                                                                                     \delta_M(\sigma)&\text{if}\;  \sigma \in \Sigma_{+}\, \text{and } \det M = -1;\\                                                                                                                                                                              
                                                                                     \gamma_M(\sigma)&\text{if}\;  \sigma \in \Sigma_{-}\, \text{and } \det M = -1,\\
          \end{cases}
 \end{aligned}
 \]
 with $M$ such that $T(x) = M\cdot x$, 
 is an extension of $T: \mathbb I \to \mathbb I$.    Moreover,  $\psi$ agrees with the first return map of the geodesic flow to $\Sigma$ if and only if $T$ is ergodic, eventually expansive,  and has entropy satisfying $2 h(T) \mu(\Omega_T) =  \emph{vol}(\,   T^1( \Gamma_T\backslash \mathbb H)\, )$.  When this holds, the  planar system $\mathcal T$ on $\Omega_T$ gives a natural extension to $T$. 

\end{Thm}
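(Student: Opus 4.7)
The plan is to adapt the reasoning outlined for Theorem~\ref{t:detOneSetting} to the two-sheeted situation, with the conditions (1)--(3) on $D$ each playing a precise role: condition (2) makes the prescription $[A] \mapsto [DAU]$ a well-defined map of $\Gamma_T$-cosets (since $D\gamma D^{-1} \in \Gamma_T$ for $\gamma \in \Gamma_T$), while condition (3) places $DM$ into $\Gamma_T$ when $\det M = -1$ so that $\gamma_M$ and $\delta_M$ produce valid cosets of the asserted form. First I would verify that $\psi$ is well-defined and takes values in $\Sigma$. Case by case, one combines \eqref{e:arnouxFlow} with the identities $U^2 = I$ and $U g_t = g_t U$ to see that each of $\alpha_M, \beta_M, \gamma_M, \delta_M$ carries a representative of the form $A(x,y)$, possibly surrounded by $D$ on the left and $U$ on the right, to one with $(x,y)$ replaced by $\widehat{\mathcal T}_M(x,y)$. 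In particular the $x$-coordinate advances by $M \cdot x = T(x)$, so the projection to the $x$-coordinate exhibits $\psi$ as an extension of $T$.

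Next I would establish that $\Sigma = \Sigma_+ \sqcup \Sigma_-$---disjoint up to null sets by (1)---is a genuine cross section. Its flow-saturation is geodesic-flow invariant and of positive Liouville measure, so by Hopf's ergodicity theorem on the finite-covolume quotient it exhausts $T^1(\Gamma_T \backslash \mathbb H)$ up to a null set. Moreover $\psi$ is obtained by advancing along the geodesic flow for time $\tau(x)$, so pointwise it is at worst an $n$th return map with $n \ge 1$.

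For the iff, in the direction assuming that $\psi$ is the first return I would argue as at the close of Theorem~\ref{t:detOneSetting}: the geodesic arclength from $\sigma$ to $\psi(\sigma)$ is $\tau(x)$; Abramov's formula applied to the first return of the geodesic flow (unit entropy on a hyperbolic surface) yields
\[
 h(\psi) \int_\Sigma \tau \, d\mu_\Sigma = \text{vol}(T^1(\Gamma_T \backslash \mathbb H)).
\]
Since $\psi$ is a natural extension of $T$, $h(\psi) = h(T)$, and because $\Sigma$ has two sheets over $\Omega_T$ each carrying Lebesgue measure, Rohlin's formula \eqref{e:rohlinEnt} gives $\int_\Sigma \tau \, d\mu_\Sigma = 2 h(T) \mu(\Omega_T)$, producing the stated identity. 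Ergodicity of $T$ descends from Hopf-ergodicity of the flow through the factor map, while eventual expansivity follows from the expanding-$x$, contracting-$y$ splitting of $\widehat{\mathcal T}_M$ along Arnoux's transversal. Conversely, if $T$ is ergodic the general inequality $2 h(T) \mu(\Omega_T) \ge \text{vol}(T^1(\Gamma_T \backslash \mathbb H))$, argued just as in the determinant one case, holds with equality precisely when $\psi$ coincides with the first return; combined with eventual expansivity this closes the iff.

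The main obstacle I anticipate is bookkeeping the factor $2$ in the entropy identity and tracking how orbits shuttle between $\Sigma_+$ and $\Sigma_-$ according to the parity of the number of $\det M = -1$ letters seen along a $T$-orbit: one must check that this alternation is consistent with $\psi$ being the first return rather than some higher iterate, and that the two sheets contribute symmetrically to the integral of $\tau$.
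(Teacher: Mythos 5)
Your overall route is the one the paper itself takes (the paper only sketches this result, deferring details to \cite{ArnouxSchmidtCommCF} and \cite{ArnouxSchmidtCross}): use \eqref{e:arnouxFlow} together with $U^2=\mathrm{I}$ and $Ug_t=g_tU$ to see that each of $\alpha_M,\beta_M,\gamma_M,\delta_M$ replaces $(x,y)$ by $\widehat{\mathcal T}_M(x,y)$, so that projecting to the $x$-coordinate exhibits $\psi$ as an extension of $T$; invoke Hopf to see that the flow-saturation of $\Sigma_+\cup\Sigma_-$ is all of $T^1(\Gamma_T\backslash\mathbb H)$, so $\Sigma$ is a cross section; and for the converse use that $\psi$ is pointwise an $n$-th return with $n\ge 1$, so that $2h(T)\mu(\Omega_T)\ge \mathrm{vol}(T^1(\Gamma_T\backslash\mathbb H))$ with equality exactly when $\psi$ is the first return. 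One correction of emphasis: condition (2) is not needed to make $[A]\mapsto[DAU]$ well defined (that map is defined on the actual matrices $A(x,y)$); its role is that $\beta_M(\sigma)=[DMD^{-1}\cdot DA(x,y)U\,g_{\tau(x)}]$ equals the time-$\tau(x)$ flow image of $\sigma$ in the quotient precisely because $DMD^{-1}\in\Gamma_T$, and similarly (3) makes $\gamma_M,\delta_M$ flow-compatible; without this, $\psi$ would not be a return map along the flow at all.

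The genuine flaw is your displayed entropy identity $h(\psi)\int_\Sigma\tau\,d\mu_\Sigma=\mathrm{vol}(T^1(\Gamma_T\backslash\mathbb H))$: this is not Abramov's formula and is false. Abramov (with the geodesic flow of unit entropy) reads $h(\psi)\,\mu_\Sigma(\Sigma)=\int_\Sigma\tau\,d\mu_\Sigma$, with no second factor of $\int\tau$; and as written, your two displayed relations combine to give $2h(T)^2\mu(\Omega_T)=\mathrm{vol}(T^1(\Gamma_T\backslash\mathbb H))$, not the asserted identity. The volume must enter through geometry, not through entropy: when $\psi$ is the first-return map, the flow boxes $\{\Phi_t(\sigma)\,:\,\sigma\in\Sigma,\;0\le t\le\tau(x)\}$ fill $T^1(\Gamma_T\backslash\mathbb H)$ up to a null set (Hopf again), whence $\int_\Sigma\tau\,d\mu_\Sigma=\mathrm{vol}(T^1(\Gamma_T\backslash\mathbb H))$ with no entropy factor; combining this with Rohlin's formula \eqref{e:rohlinEnt} and the fact that $\Sigma$ consists of two Lebesgue sheets over $\mathcal Z(\Omega_T)$ gives $2h(T)\mu(\Omega_T)=\mathrm{vol}(T^1(\Gamma_T\backslash\mathbb H))$, which is the paper's argument that $\tau$ is simultaneously the flow arclength and the Rohlin integrand. (Alternatively, the corrected Abramov identity plus $h(\psi)=h(T)$ and $\mu_\Sigma(\Sigma)=2\mu(\Omega_T)$, together with the flow-box identity, yields the same conclusion.) With that repair, the rest of your outline, including the converse via the inequality and its equality case, goes through as in the paper.
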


The following is the  main result of \cite{ArnouxSchmidtCross}. 
 \begin{Thm}\label{t:nakadaSection}    For any $\alpha \in (0,1]$ the Nakada $\alpha$-continued fraction map $T_\alpha$ is a factor of a first return system of the geodesic flow on the unit tangent bundle of the modular surface.  
 \end{Thm}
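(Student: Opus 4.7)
The plan is to invoke the mixed-determinant version of Arnoux's Method, Theorem~\ref{t:detPlusMinOneSetting}, since $\det M_{(\varepsilon:d)} = -\varepsilon$ shows that the two sign-branches of $T_\alpha$ have opposite determinants. Consequently $\widehat\Gamma_{T_\alpha}$ contains matrices of both determinant signs, and its determinant-one subgroup $\Gamma_{T_\alpha}$ has index two.

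First I would identify the groups. For any $\alpha \in (0,1]$, combinations of the determinant-one matrices among the $M_{(\varepsilon:d)}$ generate $\text{PSL}_2(\mathbb{Z})$: for $\alpha \in (0,1)$, the computation $M_{(-1:d)}^{-1} M_{(-1:d+1)} = \begin{pmatrix}1 & 0\\ 1 & 1\end{pmatrix}$ together with $M_{(-1:d)} \cdot (M_{(-1:d)}^{-1} M_{(-1:d+1)})^{-1} = M_{(-1:d-1)}$ allows one to descend in $d$ and assemble the standard generators $S$ and $T$ of $\text{PSL}_2(\mathbb{Z})$; for $\alpha = 1$ one instead uses pairwise products of the determinant-$(-1)$ matrices $M_{(+1:d)}$. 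Thus $\Gamma_{T_\alpha} = \text{PSL}_2(\mathbb{Z})$ and its quotient is the modular surface, of finite covolume.

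Next I would call on the explicit positive planar natural extension $(\mathcal T_\alpha, \Omega_\alpha, \mu)$ from the literature (Nakada's original construction for large $\alpha$, and \cite{KraaikampSchmidtSteiner} together with subsequent work for smaller $\alpha$). Choosing $D = \begin{pmatrix}1 & 0\\ 0 & -1\end{pmatrix}$, which coincides with the matrix $U$ of \eqref{e:arnouxFlow}, I would verify the three hypotheses of Theorem~\ref{t:detPlusMinOneSetting}: (a) conjugation by the diagonal matrix $D$ preserves $\text{PSL}_2(\mathbb{Z})$; (b) the direct computation $D M_{(+1:d)} = \begin{pmatrix}-d & 1\\ -1 & 0\end{pmatrix}$ has determinant one, hence lies in $\Gamma_{T_\alpha}$; (c) $\Sigma_+ \cap \Sigma_-$ is null — since $D A(x,y) U = -A(-x,-y)$, which in $\text{PSL}_2(\mathbb{R})$ equals $A(-x,-y)$, this reduces to disjointness (up to the $\Gamma$-action) of $\mathcal Z(\Omega_\alpha)$ and $-\mathcal Z(\Omega_\alpha)$, verified from the explicit description of $\Omega_\alpha$. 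Theorem~\ref{t:detPlusMinOneSetting} then produces a cross section $\Sigma = \Sigma_+ \cup \Sigma_-$ to the geodesic flow on the unit tangent bundle of the modular surface, and a map $\psi:\Sigma \to \Sigma$ extending $T_\alpha$.

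Since the statement asks that $T_\alpha$ be a factor of a \emph{first return} system, I would finish by invoking the theorem's equivalent conditions: show that $T_\alpha$ is ergodic, eventually expansive, and satisfies $2 h(T_\alpha)\mu(\Omega_\alpha) = \text{vol}(T^1(\text{PSL}_2(\mathbb{Z})\backslash \mathbb{H})) = \pi^2/3$. Ergodicity and eventual expansivity are in the literature or follow from Theorem~\ref{t:evenExpanErgoNaturally}. I expect the hardest step to be the entropy identity uniformly in $\alpha$: on matching intervals it is a direct computation from the explicit form of $\Omega_\alpha$ and the known values of $h(T_\alpha)$, but on the exceptional set $\mathcal{E}$ one would likely argue by continuity — combining continuity of $\alpha \mapsto h(T_\alpha)$ (per \cite{CT, KraaikampSchmidtSteiner}), continuity of $\alpha \mapsto \mu(\Omega_\alpha)$, and density of the matching intervals in $(0,1]$ — to extend the identity to all parameters.
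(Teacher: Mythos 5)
Your proposal is correct and follows essentially the same route as the paper: identify $\Gamma_{T_\alpha}$ as the modular group, feed the planar extensions of \cite{KraaikampSchmidtSteiner} into Theorem~\ref{t:detPlusMinOneSetting}, and then verify the first-return criterion via ergodicity, eventual expansivity and the entropy identity $2h(T_\alpha)\mu(\Omega_\alpha)=\pi^2/3$. The only differences are cosmetic: the paper takes $D=RU=\left(\begin{smallmatrix}0&1\\1&0\end{smallmatrix}\right)$ rather than your $D=U$ (both satisfy the hypotheses), and it obtains the entropy identity more directly by citing the constancy of $h(T_\alpha)\mu(\Omega_\alpha)$ on $(0,1]$ from \cite{KraaikampSchmidtSteiner}/\cite{CT} anchored at $\alpha=1$ via \cite{ArnouxCodage}, instead of your matching-interval-plus-continuity argument.
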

\begin{proof}[Sketch]
Recall that the general form of the Nakada $\alpha$-continued fraction maps is $T_{\alpha}(x) = \begin{pmatrix}- d &\varepsilon\\1&0 \end{pmatrix}\cdot x$.   As $\begin{pmatrix}- d &\varepsilon\\1&0 \end{pmatrix} = \begin{pmatrix}- d &-1\\1&0 \end{pmatrix} \begin{pmatrix}1 &0\\0&-\varepsilon \end{pmatrix}$,    one can show that for all $\alpha$, the group $\Gamma_{T_{\alpha}}$ is the modular group.  As well, for $\alpha >0$,  the nontrivial coset of $\Gamma_{T_{\alpha}}$ in $\widehat{\Gamma}_{T_{\alpha}}$ is represented by $U$.   
 
   Let $D = R U = \begin{pmatrix}0&1\\1&0\end{pmatrix}$.  One has  $D A(x,y) U =  \begin{pmatrix}1&-y\\x&1-xy\end{pmatrix}$.   For each  $\alpha \in (0,1)$,   \cite{KraaikampSchmidtSteiner} give a positive planar extension $\Omega_{\alpha}$ for $T_{\alpha}$.  One now easily verifies that $\Sigma = \Sigma_{\alpha}$ satisfies the hypotheses of Theorem~\ref{t:detPlusMinOneSetting} to form a cross section to the geodesic flow on the unit tangent bundle of the modular surface.   By Arnoux's \cite{ArnouxCodage} the result holds in the case of $\alpha=1$, the case of regular continued fractions.  By  \cite{KraaikampSchmidtSteiner} (or by \cite{CT}), the product 
$h(T_{\alpha}) \mu(\Omega_{\alpha})$ is constant for the set of $0< \alpha \le 1$. Thus, for each of these values of $\alpha$, Theorem~\ref{t:detPlusMinOneSetting} applies to show that the {\em first} return system to $\Sigma_{\alpha}$ is an extension for the system of $T_{\alpha}$.
\end{proof} 
  
In fact, \cite{ArnouxSchmidtCross}, influenced by the formulation in the $\alpha=1$ case due to the geometric approach of  \cite{ArnouxCodage},  work with the cross section obtained by applying $R$ to $\Sigma$.

 
 \subsubsection{Realizable first return type}\label{sssRealFirst} The above results motivated 
\cite{ArnouxSchmidtCommCF} (in the determinant one setting)  to define a piecewise M\"obius interval map $T$ to be of {\em  first return type} if: (1) $T$ has a planar extension, with $0< \mu(\Omega) <\infty$; (2) $\Gamma_T$ is Fuchsian, of finite covolume; (3) $\tau_T(x) \ge 0$ for $\nu$-a.e.~ $x$; and, (4) for almost every  $(x,y) \in \Omega$ and every non-trivial  $N \in \widehat{\Gamma}_T$ with $ \mathcal T_N(x,y) \in \Omega$ and $\tau(N, x)\ge 0$, we have $\tau_T(x) \le  \tau(N, x)$.

Thus, in the determinant one setting if $T$ is of first return type then one can verify that the map $\phi$ of Theorem~\ref{t:detOneSetting} does accord with the first return map of the geodesic flow.  A natural extension is thus found for $T$, along with information about its entropy.   

We say that $T$ is of {\em realizable first return type} whenever  $T$ is of first return type and if  
 $\widetilde{\Gamma}_T \neq \Gamma_T$ then  also:  (5)  there is a  $D \in \text{PSL}^{\pm 1}(\mathbb Z)$  such  the hypotheses (1)--(3) of Theorem~\ref{t:detPlusMinOneSetting} hold. 
 
 Note that when $T$ is of {\em realizable first return type} then one of Theorem~\ref{t:detOneSetting} or Theorem~\ref{t:detPlusMinOneSetting} holds.  In particular, the system of $T$ is a factor of the first return map to a cross section for the geodesic flow on the unit tangent bundle of the surface uniformized by $\Gamma_T$. 
 
 \subsection{Bernoulli systems}\label{ssBern}   The following material is called upon in  \S~\ref{ss:quiltRealFirstReturn}.

A symbolic Bernoulli system is given by taking the shift map $\sigma$ on the bi-infinite sequences $ X = \mathcal A^{\mathbb Z}$ with $\mathcal A = \{ 1, 2, \dots, n\}$ for some $n \in \mathbb N$.   There is a standard manner to define the distance between two sequences, making $X$ into a (compact) metric space.   Cylinders of rank $m$ are defined as the set of sequences which agree for a specified consecutive sequence of length $m$.   For each choice of probability vector $(p_1, \dots, p_n)$, thus with $p_1+ \cdots + p_n = 1$, one gives a cylinder of rank $m$ the measure defined by taking the product of the $p_i$ corresponding to the `letters' $a_i \in \mathcal A$ which define the cylinder.   A limiting process results in a dynamical system $(X, \sigma, \mathscr B, \mu)$.     Any dynamical system isomorphic to such a system is called a {\em Bernoulli system} (alternatively, a Bernoulli scheme).     
Note that one can also consider  one-sided  Bernoulli shifts, but we ignore that here. 

Ornstein, both singly and with co-authors, established several celebrated results about Bernoulli systems.  
\begin{Thm}\label{t:Ornstein} [Ornstein 1970, \cite{OrnsteinBSentropy}]  Entropy is a complete invariant for Bernoulli systems.  That is, if $(X, \mathcal T, \mathscr B, \mu)$ and $(Y,  U, \mathscr C, \nu)$ are two Bernoulli systems, then these systems are isomorphic if and only if they have the same entropy value.
\end{Thm}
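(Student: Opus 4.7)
The forward direction is trivial: since entropy is defined as a measure-theoretic invariant (via the Kolmogorov--Sinai theorem, computed on any generating partition), any isomorphism $\Phi\colon X \to Y$ with $\Phi \circ \mathcal T = U \circ \Phi$ carries the invariant measure $\mu$ to $\nu$ and hence forces $h(\mathcal T) = h(U)$. All of the difficulty lies in the converse.

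For the converse, my plan is to follow Ornstein's original strategy, which bypasses any attempt to write the isomorphism in closed form and instead constructs it as the limit, in an appropriate metric, of measure-preserving codings. The first step is to introduce the $\bar d$-metric on stationary processes: for two $\mathcal A$-valued stationary processes, $\bar d$ measures the infimum, over joinings, of the expected per-coordinate Hamming disagreement. This metric is well-adapted to entropy, and complete on the space of processes of a given entropy. The second step is to isolate an intrinsic property of Bernoulli shifts — \emph{finite determinism}, or equivalently the \emph{very weak Bernoulli} (VWB) property — characterizing them among all stationary processes: a process is FD if processes that agree with it on finitely many joint distributions up to small error are $\bar d$-close to it. One verifies directly that i.i.d.\ processes are FD, and that FD is preserved under factors and inverse limits in ways that will be needed.

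With these tools in hand, the heart of the argument is the isomorphism construction. Given two Bernoulli schemes $(X,\mathcal T,\mu)$ and $(Y,U,\nu)$ with $h(\mathcal T) = h(U)$, one invokes Sinai's factor theorem to obtain an embedding of a Bernoulli factor of the correct entropy from each side into the other, and then upgrades this factor map to an isomorphism by iteratively perturbing a sequence of almost-isomorphisms in $\bar d$. The key mechanism is the \emph{marker-and-filler} construction: using Rokhlin towers of increasing height, one partitions $X$ into long \emph{marker} blocks (where the coding is already close to correct) and short \emph{filler} blocks (where one has room to adjust the coding to repair errors discovered at the previous stage), in such a way that at each stage the $\bar d$-distance of the partition being coded is forced to decrease geometrically while agreement on the already-fixed coordinates is preserved. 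Cauchy-completeness of the relevant space of codings in $\bar d$ produces a limit that is simultaneously a measure-preserving factor map in both directions, hence an isomorphism.

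The main obstacle, and the one whose proper execution occupies the bulk of Ornstein's paper, is precisely the combinatorial bookkeeping in this marker-and-filler step: one must arrange the towers so that the perturbations at stage $n+1$ do not undo the agreement established at stage $n$, which requires careful choice of tower heights, of the cutoff between markers and fillers, and of the probability estimates (Shannon--McMillan--Breiman on the marker blocks, entropy-matching on the fillers) that guarantee the fillers have enough ``room'' to absorb the necessary edits. Once this delicate construction is in place, the completeness of $\bar d$ on FD processes, together with the $h(\mathcal T)=h(U)$ hypothesis supplying the equal amount of room on either side, delivers the isomorphism and hence the theorem.
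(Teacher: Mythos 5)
The paper does not prove this statement at all: it is Ornstein's celebrated theorem, imported verbatim from \cite{OrnsteinBSentropy} as background (it is used later only as a black box, in Proposition~\ref{p:quiltBernoulli} and in the proof of Theorem~\ref{t:quasiModo}). So there is no in-paper argument to measure you against; the only meaningful comparison is with Ornstein's proof itself, and at the level of named ingredients your outline tracks the standard (post-1970, streamlined) exposition: the $\bar d$-metric on stationary processes, the finitely determined / very weak Bernoulli characterization, Sinai's factor theorem, and a marker-and-filler copying argument over Rokhlin towers. The easy direction (entropy is an isomorphism invariant) is handled correctly.

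The genuine gap is that everything carrying mathematical weight is described rather than executed, so what you have is a roadmap of the known proof, not a proof. You do not establish the properties of $\bar d$ you invoke (completeness should be stated for stationary processes on a fixed finite alphabet, with the fixed-entropy considerations handled via the Fano-type $\bar d$-continuity of entropy, which you would need to state and prove); you do not prove that i.i.d.\ processes are finitely determined, which is itself a nontrivial argument resting on Shannon--McMillan--Breiman and a law-of-large-numbers estimate; and the marker-and-filler step --- the choice of tower heights, the entropy count showing the filler blocks have enough names to absorb corrections, and the verification that stage $n+1$ edits preserve the agreement achieved at stage $n$ --- is precisely the content of Ornstein's paper and is acknowledged but not supplied, so the asserted Cauchy sequence of codings is never actually produced. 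A further, smaller point: the FD/VWB machinery and its equivalence postdate the 1970 paper (Ornstein's original argument constructs the isomorphism for Bernoulli shifts directly, proving a strengthened Sinai-type statement en route), so if you present this as ``Ornstein's original strategy'' you should either follow the 1970 argument or cite the later formulation you are actually sketching. For the purposes of this paper none of this is needed --- the theorem is legitimately quoted --- but as a standalone proof the proposal is incomplete at exactly the points where the difficulty lies.
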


 A measure preserving flow  $\Phi_t$ (as in \S~\ref{sss:crossFlowArnouxTrans}) is called a {\em Bernoulli flow} if for each $t$, the system defined by $\Phi_t$ is  a Bernoulli system.     Due to a result of Abramov \cite{AbramovFlow}, it is traditional to say that the entropy value of a measure preserving flow   is the entropy of its time one map, $\Phi_1$.

\begin{Thm}\label{t:OrnsteinWeissGeoFlow} [Ornstein-Weiss  1973, \cite{OrnsteinWeiss}]    The geodesic flow on the unit tangent  bundle of a finite volume hyperbolic surface (or orbifold) is a Bernoulli flow of finite entropy.
\end{Thm}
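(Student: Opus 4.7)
The plan is to treat the two claims separately: finiteness of entropy first, then the Bernoulli property. For finite entropy, I would exploit that Liouville measure has finite total mass (since $\Gamma\backslash \mathbb H$ has finite hyperbolic volume and the circle fiber is compact) and choose a cross section $\Sigma$ to the geodesic flow with integrable return time. For $\Gamma = \mathrm{PSL}(2, \mathbb Z)$ one may take the projection modulo $\Gamma$ of (a compact piece of) Arnoux's transversal $\mathscr A$; for general $\Gamma$ one constructs a Poincar\'e section adapted to a Dirichlet fundamental domain. By Abramov's formula \eqref{e:AbramForm}, it suffices to bound the entropy of the first-return transformation $R_\Sigma$. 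Rohlin's formula \eqref{e:rohlinEnt} expresses $h(R_\Sigma)$ as the integral of the log-derivative of the return map; this derivative grows only exponentially in the return time $r(x)$, which has finite mean in the finite-volume setting. Hence $h(\Phi_1) = h(R_\Sigma)/\mu_\Sigma(\Sigma_{[0,1]}) < \infty$.

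For the Bernoulli property, the plan is to invoke Ornstein's isomorphism theory. The geodesic flow is Anosov: on a surface of constant curvature $-1$ the stable and unstable horocycle foliations provide uniformly contracting and expanding invariant foliations for $\Phi_t$. Using the Hopf argument refined to these foliations one first obtains ergodicity (already known by Hopf) and then the Kolmogorov (K) property. To upgrade from K to Bernoulli one constructs a (countably generated) Markov partition for a symbolic coding of a Poincar\'e section and verifies Ornstein's \emph{very weak Bernoulli} criterion for the associated symbolic system, concluding that the symbolic shift — and hence the time-one map $\Phi_1$ via isomorphism — is a Bernoulli scheme. A further theorem of Ornstein then lifts Bernoullicity of $\Phi_1$ to Bernoullicity of the flow itself: an ergodic flow is Bernoulli if and only if some (equivalently every) positive-time map is Bernoulli.

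The hardest part will be the non-compact (finite-volume) case, which is exactly the setting needed here because the modular surface and many Fuchsian quotients of interest have cusps. In the cusps the injectivity radius tends to zero, so the compact Anosov--Sinai theory of Markov partitions does not transfer verbatim. One must control cusp excursions — their exponentially small Liouville measure and the associated decay of correlations — in order to build a Markov partition with countably many atoms and verify the very weak Bernoulli estimates uniformly in the depth of excursion. Marrying this countable-state symbolic construction with Ornstein's finitely-determined criterion is the technical heart of the original Ornstein--Weiss argument, and would be the step requiring the most care in a self-contained proof.
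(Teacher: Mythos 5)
This statement is not proved in the paper at all: it is quoted as background, with the proof deferred entirely to the cited Ornstein--Weiss article, and it is later used as a black box (e.g.\ in Remark~\ref{rmk:crossBern}). So there is no internal argument to compare yours against; the relevant question is whether your outline would stand on its own, and as written it does not. It is a plan that correctly names the known strategy --- hyperbolicity of the flow, the K property, verification of Ornstein's very weak Bernoulli condition on a symbolic coding (or, in Ratner's variant, a Markov partition), and Ornstein's theorem that a flow with one Bernoulli time-$t$ map is a Bernoulli flow --- but the step you yourself identify as "the technical heart" (the VWB estimates, and their uniformity over cusp excursions in the finite-volume noncompact case) is exactly the content of the theorem being claimed, and it is left unexecuted. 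Deferring it means the proposal is a reading guide to \cite{OrnsteinWeiss}, not a proof.

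The finite-entropy half also has a concrete gap. Rohlin's formula \eqref{e:rohlinEnt} as available here is a statement about one-dimensional interval maps with an a.e.\ derivative; the first-return map to a two-dimensional cross section of the flow is not such a map, so you cannot invoke it to bound $h(R_\Sigma)$, and the Abramov step is misstated (for a flow one divides by the mean return time $\int_\Sigma r\, d\mu_\Sigma$, not by the measure of a flow box $\Sigma_{[0,1]}$; moreover integrability of $r$ does not by itself control the integral of the log-Jacobian without an argument that the expansion is comparable to $e^{r}$, which is again a hyperbolicity estimate you have not supplied). A cleaner and complete route to finiteness is the Margulis--Ruelle inequality: the entropy of the time-one map with respect to normalized Liouville measure is bounded by the sum of the positive Lyapunov exponents, which for constant curvature $-1$ is $1$; in fact $h(\Phi_1)=1$ by Pesin's formula. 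Replacing your Abramov/Rohlin paragraph by this observation would repair that half; the Bernoulli half would still require carrying out (or honestly citing) the Ornstein--Weiss analysis.
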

 
A nice application of the following result can be found in Haas' study of interval maps given by a single M\"obius transformation, \cite{Haas}.  

\begin{Thm}\label{t:rychlikBernoulli} [Rychlik 1983, \cite{Rychlik}]     Suppose that $T$ is a piecewise monotonic interval map with a unique invariant probability measure that is equivalent to Lebesgue measure; is such that every nonempty open subset is mapped onto the interval by some power of $T$;   and,  whose Jacobian $T'$ is such that $|1/T'|$ has bounded variation.   Then the natural extension of $T$ is a Bernoulli system. 
\end{Thm}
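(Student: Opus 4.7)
The plan is to combine transfer operator (Perron–Frobenius) methods with Ornstein theory. Since $T$ is piecewise monotonic with $|1/T'|$ of bounded variation, the natural object to study is the transfer operator $\mathcal L$ acting on the Banach space $\mathrm{BV}(\mathbb I)$. The first step would be to establish a Lasota–Yorke type inequality
\[
\mathrm{Var}(\mathcal L^n f) \;\le\; C\rho^n\, \mathrm{Var}(f) \;+\; D\,\|f\|_{L^1}
\]
for some $\rho<1$ and constants $C,D>0$ depending on $T$; the hypothesis that $|1/T'|$ has bounded variation is exactly what is needed to push the usual one-step Lasota–Yorke bound through to an iterate for which the contraction factor is $<1$.

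Once this is in hand, quasi-compactness of $\mathcal L$ on $\mathrm{BV}$ follows from the Ionescu-Tulcea–Marinescu theorem. The eigenvalue $1$ is an eigenvalue since the density $h$ of the unique absolutely continuous invariant probability measure is a fixed point of $\mathcal L$. Uniqueness of the invariant density makes $1$ a simple eigenvalue, and the topological exactness hypothesis (every nonempty open set eventually covers $\mathbb I$) rules out any other eigenvalue on the unit circle: any peripheral eigenfunction would have constant modulus on its support, forcing a nontrivial invariant partition that contradicts topological exactness. This gives a spectral gap, hence exponential decay of correlations for $\mathrm{BV}$ observables against $L^1$ observables.

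From the spectral gap one deduces that the one-sided system $(T,\mathbb I,\mu)$ is exact in Rohlin's sense: the intersection $\bigcap_n T^{-n}\mathscr B$ is trivial modulo $\mu$. Consequently the natural extension $(\widehat T,\widehat{\mathbb I},\widehat\mu)$ is a K-automorphism. To pass from K to Bernoulli I would invoke Ornstein's isomorphism theory via the very weak Bernoulli (VWB) criterion. Take the partition $\mathcal P$ of $\mathbb I$ into monotonicity intervals of $T$; this is a one-sided generator for $T$, and its lift generates the natural extension. The key point is to show that $\mathcal P$ is VWB for $\widehat T$. Writing $\mathcal P^{m}_{0}=\bigvee_{i=0}^{m-1}T^{-i}\mathcal P$, one estimates
\[
\sum_{A\in\mathcal P^m_0}\Bigl|\,\mu(A\cap T^{-n-m}B)-\mu(A)\mu(B)\,\Bigr|
\]
for $B\in\mathcal P^k_0$ and shows this goes to $0$ uniformly as $n\to\infty$; the exponential decay of correlations (applied after verifying that conditional densities of cylinders of $\mathcal P^m_0$ have uniformly controlled BV norms, which is where the Lasota–Yorke bound is reused) provides precisely this estimate.

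The main obstacle is the last paragraph: transferring spectral decay of correlations into the combinatorial/metric VWB condition on partitions. In practice this is handled by showing that for $T$-invariant measures with exponential mixing on BV, the natural partition is even weak Bernoulli, a sharper property than VWB that directly implies Bernoulli via Friedman–Ornstein. Once VWB (or weak Bernoulli) is established, Ornstein's theorem yields that $(\widehat T,\widehat\mu)$ is measure-theoretically isomorphic to a Bernoulli scheme, completing the proof.
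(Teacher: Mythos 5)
You are in a slightly unusual situation here: the paper gives no proof of this statement at all --- it is quoted as background from Rychlik's paper \cite{Rychlik} and later only invoked (in Propositions~\ref{p:quiltFirstReturn} and \ref{p:quiltBernoulli}, where the maps are assumed expansive). So the only meaningful comparison is with the argument in the literature, and in outline you have reconstructed it: Lasota--Yorke inequality on $\mathrm{BV}$, quasi-compactness and a spectral gap, exactness of the one-sided system, then weak (or very weak) Bernoulli of the monotonicity partition and an appeal to Ornstein/Friedman--Ornstein. Your remark that exactness only gives the K-property and that the real work is the VWB estimate is also correct.

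There is, however, a genuine gap at your first step. Bounded variation of $|1/T'|$ does \emph{not} produce a contraction factor $\rho<1$ for any iterate: the one-step bound has the form $\mathrm{Var}(\mathcal L f)\le 2\,\sup|1/T'|\,\mathrm{Var}(f)+D\|f\|_{L^1}$, and to iterate it into $\mathrm{Var}(\mathcal L^n f)\le C\rho^n\mathrm{Var}(f)+D\|f\|_{L^1}$ you need uniform (or at least eventual) expansion, $\sup|1/(T^n)'|<1$ for some $n$. That expansivity is an explicit standing hypothesis in Rychlik's theorem (he assumes $\sup|1/T'|<1$) and is tacitly suppressed in the paper's informal statement; it cannot be recovered from the stated hypotheses. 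Concretely, an intermittent Manneville--Pomeau map with small exponent is piecewise monotone, has $|1/T'|$ of bounded variation, has a unique invariant probability equivalent to Lebesgue, and is topologically exact, yet its transfer operator has no spectral gap on $\mathrm{BV}$ and correlations decay only polynomially --- so your Lasota--Yorke/spectral-gap step fails, as does the subsequent reuse of that bound to control the $\mathrm{BV}$ norms of conditional densities in the VWB estimate, and without expansion the monotonicity partition need not even generate. The fix is simply to work under Rychlik's actual hypotheses (uniform expansion), which is also how the present paper uses the theorem, since its applications are to expansive maps; with that hypothesis restored, your route is essentially the standard proof.
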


  Two dynamical systems are called {\em quasi-isomorphic} if their natural extensions are isomorphic.  See \cite{Weiss} for this term and pointers to the literature for examples of non-isomorphic but quasi-isomorphic systems.

\subsection{Terse review of the setting of \cite{CaltaKraaikampSchmidt}}\label{ss:CKS}  We call on this material for applications in \S~\ref{ass:egAlpOne}, and for examples in \S\S~\ref{ss:bddAndFullGivesErgod} and \ref{ss:FiniteQuiltingClose} (including the motivating figures, 
Figure~\ref{f:smallAlpQuilt}  and Figure~\ref{f:quiltLargeAlpLessThanDelta}).   As well, Remark~\ref{rmk:crossBernHoldsForCKSsys} calls on this material to suggest how some of our results can be used in \cite{CaltaKraaikampSchmidtPlanar}.

  As mentioned in the introduction,  we came to the present work in search of tools to further the study of a one-parameter family  of piecewise M\"obius interval maps associated to each of a countably infinite family of triangle Fuchsian groups begun in  \cite{CaltaKraaikampSchmidt}.   Here we give a quick review of some of the notation and terminology from that article.     Besides providing direct motivation for the work here,  as with the Nakada $\alpha$-continued fractions,  this material affords a setting for our illustrative applications throughout this paper.

\begin{figure}[h]
\scalebox{.33}{
\noindent
\begin{tabular}{rl}
\includegraphics{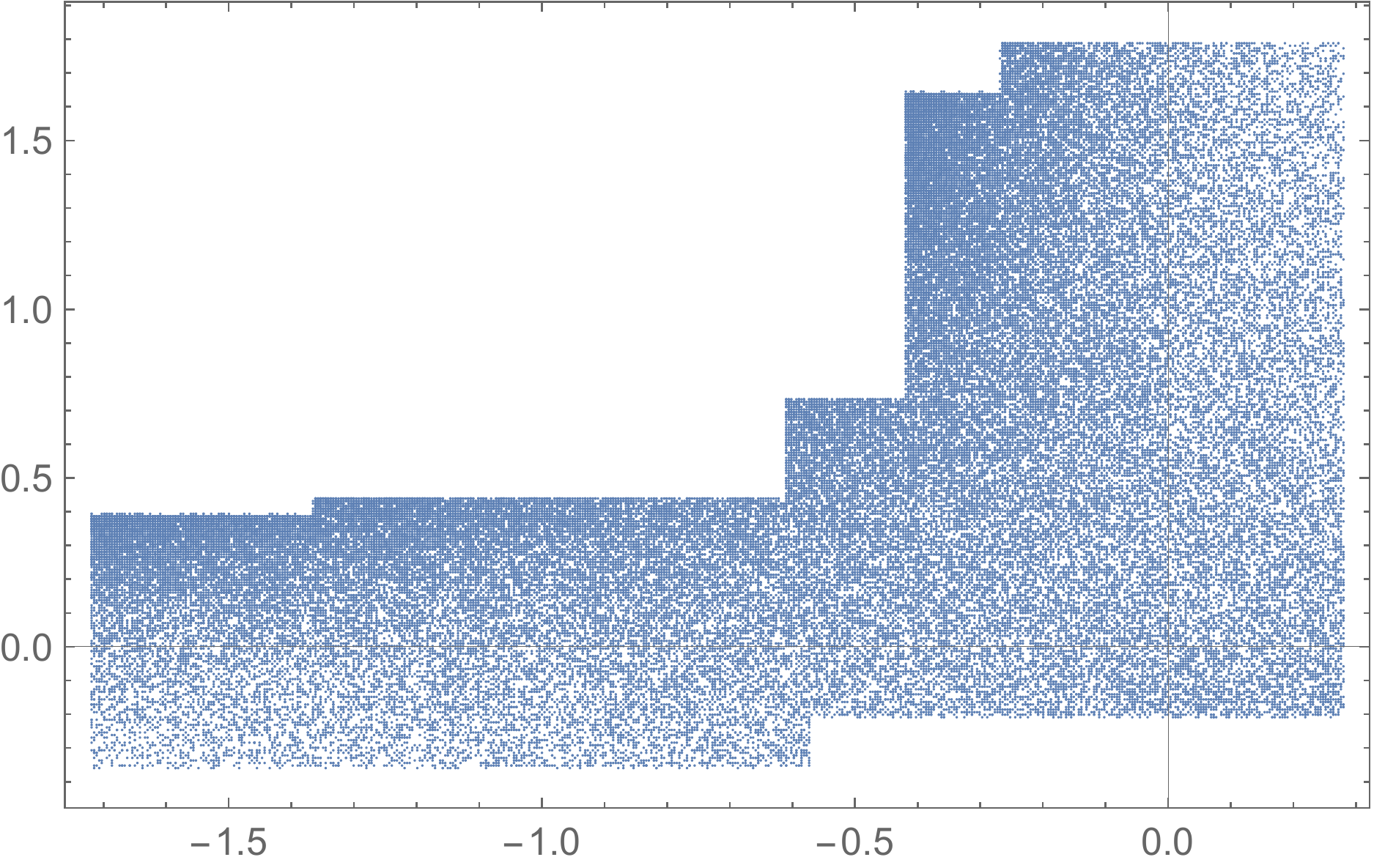}
&
\phantom{fourscore and twenty}
\includegraphics{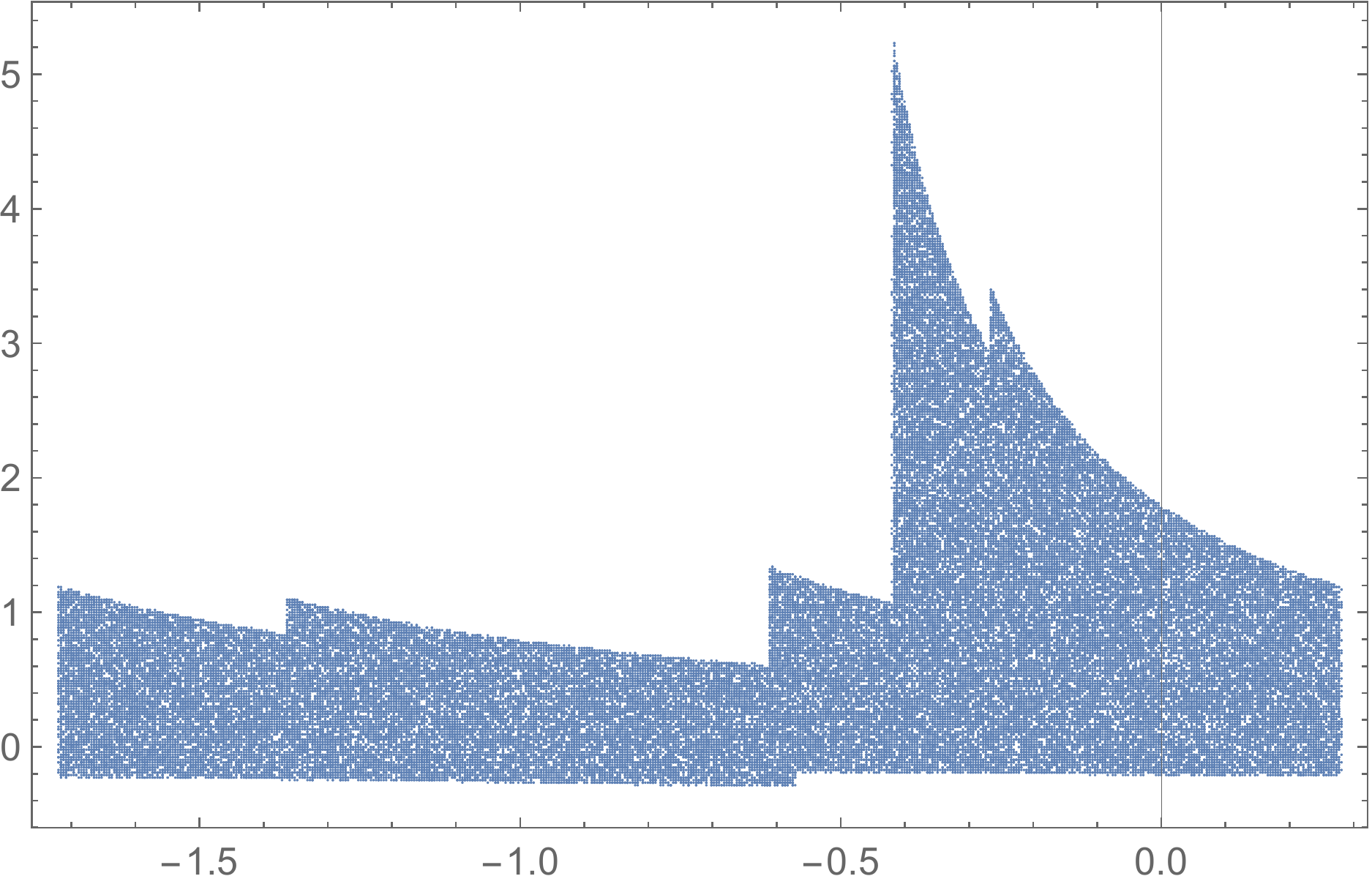}
\end{tabular}
}
\caption{Approximate plots of 100,000 points of a $\mathcal T_{3, 0.14}$-orbit (left), and its image under  $\mathcal Z(x,y)$ of \eqref{eqZmap}.}%
\label{f:standVsLebesgue}
\end{figure}

For integer  $n\ge 3$  we let $\nu = \nu_n =  2 \cos \pi/n$ and 
\[ t  := t_{n} = 1 + 2 \cos \pi/n.\]

We use the group  $G_{n}\subset \text{PSL}(2, \mathbb R)$ generated by 
\begin{equation}\label{e:generators} 
A = \begin{pmatrix} 1& t\\
                                      0&1\end{pmatrix},\,  C = \begin{pmatrix} -1& 1\\
                                      -1&0\end{pmatrix}\,.
\end{equation}
(We will have no direct use of the matrix $B = A^{-1}C$ of \cite{CaltaKraaikampSchmidt}.    Furthermore,  that paper studies a larger collection of groups, indexed by a pair of integers, $(m,n)$; here we have set the index $m$  equal to 3.)  The group  $G_n$ is a Fuchsian triangle group of signature $(0; 3, n, \infty)$.

 \bigskip
 Fix $n\ge 3$. For each  $\alpha \in [0,1]$, let  $\ell_0(\alpha) = (\alpha - 1)t,    r_0(\alpha) = \alpha t$ and $\mathbb I_{\alpha} = \mathbb I_{n, \alpha} =  [\ell_0(\alpha), r_0(\alpha))$. 
Our interval maps are piecewise M\"obius,  of the form 
\begin{equation}\label{e:theMaps} 
T_{\alpha} = T_{n,\alpha}:   [\ell_0(\alpha), r_0(\alpha)] \to  \mathbb I_{\alpha}, \;\;\; x \mapsto A^{k} C^{l}\cdot x
\end{equation}
where $\ell\in\{1,2\}$ is minimal such that $C^{l}\cdot x \notin  \mathbb I_{\alpha}$
 and $k$ is the unique integer such that then $ A^{k} C^{l}\cdot x \in \mathbb I_{\alpha}$.    We call $b_{\alpha}(x)= (k,l)$ the $\alpha$-digit of such an $x$, and say that $x$ lies in the cylinder $\Delta_{\alpha}(k,l)$.   That this does give continued fraction-like expansions of real numbers is shown in \cite{CaltaKraaikampSchmidt}.

 The parameter interval is naturally partitioned, 
 $(0, 1] = (0, \gamma_n)\cup [\gamma_n, \epsilon_n) \cup [\epsilon_n, 1]$,  where $\alpha < \gamma_n$ if  $\forall x \in [\ell_0(\alpha), r_0(\alpha)]$ the $\alpha$-digit $(k,l)$ of $x$ has $l=1$ and $\alpha \ge \epsilon_n$ if and only if both $\alpha > \gamma_n$ and the $\alpha$-digit of $\ell_0(\alpha)$ equals  $(k,1)$ with $k \ge 2$. See (\cite{CaltaKraaikampSchmidt}, Figure~4.1) for plots indicating dynamical behavior related to this partition.   We informally refer to the set of $\alpha< \gamma_n$ as the {\em small $\alpha$},  and all others as  {\em large} $\alpha$; see Figures~\ref{f:standVsLebesgue} and \ref{f:quiltLargeAlpLessThanDelta}.     For small $\alpha$, we use {\em simplified digits:} since $l =1$ we only report the exponent of $A$;  in this setting we use $\underline{d}_{[1, \infty)}^{\alpha}, \overline{d}_{[1, \infty)}^{\alpha}$  in place of $\underline{b}_{[1, \infty)}^{\alpha}, \overline{b}_{[1, \infty)}^{\alpha}$, respectively.    In general, the leftmost cylinder (whose left endpoint is $\ell_0(\alpha)\,$) and the rightmost (with right endpoint $r_0(\alpha)\,$) are possibly non-full; in the case of small $\alpha$,  these are the only possible non-full cylinders. 
 
  For large $\alpha$,  since the $T_{\alpha}$-image of
 \begin{equation}\label{e:bFrak}
 \mathfrak b_{\alpha} = C^{-1}\cdot \ell_0(\alpha)
 \end{equation}
is exactly the image  of $\ell_0(\alpha)$, the cylinder which has  $ \mathfrak b_{\alpha}$ as its left endpoint is also 
a candidate for non-fullness.   Note that for $r_0(\alpha) \ge x \ge \mathfrak b_{\alpha}$ we have $T_{\alpha}(x) = A^k C^2 \cdot x$ for some $k \in \mathbb Z$. 

From this last, if  $\alpha< \alpha'$ then  $\mathfrak b_{\alpha} < \mathfrak b_{\alpha'}$  and if $\alpha, \alpha'$ are sufficiently close so that the $\alpha$-digit of the left endpoint of $\mathbb I_{\alpha}$ equals $(k, 1)$ as does also the $\alpha'$-digit of the left endpoint of $\mathbb I_{\alpha}$, then there are $x \in \Delta_{\alpha}(k, 2) \cap \Delta_{\alpha'}(1, 1)$, see Figure~\ref{f:quiltLargeAlpLessThanDelta} for an indication of such a situation.  With this condition, $T_{\alpha}(x) = (A^kC)A^{-1}\cdot T_{\alpha'}(x)$,  but the $T_{\alpha}$- and $T_{\alpha'}$-images of any other $x \in \mathbb I_{\alpha}\cap \mathbb I_{\alpha'}$ either agree, or differ by  an application of $A$ or $A^{-1}$.    We consider this matter in Proposition~\ref{p:closeNbhsNotJustShiftChangesQuiltTogether}.

 A main result of \cite{CaltaKraaikampSchmidt} is that there is a notation of {\em matching intervals} (there called synchronization intervals),  and for each $n$ the complement of the union of the matching intervals is a Lebesgue null subset of the parameter interval $[0,1]$.  We again call this complement the {\em exceptional set}, $\mathcal E = \mathcal E_n$.

We can define a map $\mathcal T_{n, \alpha}$ on that portion of the plane fibering over the interval of definition of  $T_{n, \alpha}$, and hope for finding planar extensions.   Again, see  Figures~\ref{f:standVsLebesgue} and \ref{f:quiltLargeAlpLessThanDelta}.

\section{Bounded non-full range  and finiteness of $\Omega$ implies ergodicity}\label{s:BddNonFullRange}  In this section, we state and prove Theorem~\ref{t:evenExpanErgoNaturally} and also give an application of it.

We use the term {\em eventually expansive} to describe an interval map $T$ having some compositional power $r$ that is expansive,  thus there is some $c>1$ so that all $x$ in the domain of $T^r$ satisfy  $|(T^r)'(x)|\ge c$.   

 As stated in the Introduction, we studied an infinite countable collection of one parameter family of piecewise M\"obius interval maps  in  \cite{CaltaKraaikampSchmidt}.     
  As opposed to say the Nakada $\alpha$-continued fractions,   almost all of the maps we considered there are {\em not expansive} maps.  A direct proof that each is eventually expansive seems tedious at best; this motivated us to seek   a general result that can be easily applied to deduce eventual expansitivity.   We give such a result here.    

One expects sufficiently nice continued fraction maps to be ergodic with respect to some measure which is absolutely continuous with respect to Lebesgue measure; the easiest setting to prove such results is when a Markov condition is fulfilled.   In the setting of \cite{CaltaKraaikampSchmidt}, and in many cases of continued fraction like maps,  Markov properties do not hold.   In Definition~\ref{d:Def} below,  we introduce a property that is often fulfilled  in these settings.  This property and basic finiteness conditions satisfied by a planar extension for a map then imply ergodicity and more.


\subsection{Adler's `Folklore Theorem'}  

Making an initial approach of R\'enyi much more practical, Adler \cite{AdlerRevisit, AdlerCF} gave conditions  implying that an interval map $f$ has a unique ergodic measure that is equivalent to Lebesgue measure.    In his afterword to \cite{BowenInvMeas},  Adler sketched how to loosen one of his original conditions, with the following  result (to which he referred there as a {\em folklore theorem}). 

 \begin{Thm}\label{t:adler}[Adler, 1979;   \cite{AdlerRevisit, AdlerCF, BowenInvMeas}] Suppose that $f$ is an interval map such that:
 
 \begin{enumerate} 
 \item[i.)]  All cylinders of $f$ are full;
 \item[ii.)]   $f$ is twice differentiable; 
 \item[iii.)]    $f$ is eventually expansive;
 \item[iv.)]    there is a finite bound on $|f''(x)|/ f'(x)^2$ for $x$ in the domain of $f$.  
 \end{enumerate}
 
 Then $f$ has a unique ergodic probability measure that is equivalent to Lebesgue measure.  
 \end{Thm}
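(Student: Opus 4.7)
The plan is to prove the theorem via the Perron--Frobenius (transfer) operator $\mathcal L$ associated to $f$, which acts on integrable functions by $\mathcal L \phi(y) = \sum_{x \in f^{-1}(y)} \phi(x)/|f'(x)|$. The fundamental reduction is to establish a Rényi-type bounded distortion estimate: there exists a constant $K > 0$ such that for every rank-$n$ cylinder $C_n$ and every $x,y \in C_n$,
\[
K^{-1} \;\le\; \frac{|(f^n)'(x)|}{|(f^n)'(y)|} \;\le\; K.
\]
I would derive this by telescoping $\log|(f^n)'(x)| - \log|(f^n)'(y)| = \sum_{k=0}^{n-1}\bigl(\log|f'(f^k x)| - \log|f'(f^k y)|\bigr)$, applying the Mean Value Theorem to each summand to produce factors of $|f''|/|f'|$, and using eventual expansivity (iii) to dominate the resulting geometric series by the derivative bound provided by (iv); twice-differentiability (ii) ensures the required smoothness.

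With bounded distortion in hand, I would establish the existence of an absolutely continuous $f$-invariant probability measure. Because every cylinder is full, $f^n$ maps each $C_n$ bijectively onto $\mathbb I$, so bounded distortion forces $|(f^n)'(x)|$ to be comparable to $m(\mathbb I)/m(C_n)$ uniformly on $C_n$, where $m$ denotes Lebesgue measure. A direct computation using $\mathcal L^n \mathbf{1}(y) = \sum_{C_n} 1/|(f^n)'(x_{C_n,y})|$, summed over the unique preimages of $y$ in each rank-$n$ cylinder, then yields two-sided bounds $K^{-1} \le \mathcal L^n \mathbf{1}(y) \le K$ Lebesgue-a.e.; Cesàro-averaging and extracting a weak-$\ast$ limit produces an $f$-invariant density $h$ with $K^{-1} \le h \le K$, so that $d\mu = h \, dm$ is an invariant probability measure equivalent to Lebesgue measure.

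Ergodicity follows from a quasi-Bernoulli comparison. Because $f^n$ is a bijection of $C_n$ onto $\mathbb I$ with distortion at most $K$, for any Borel $A \subset \mathbb I$ and any rank-$n$ cylinder $C_n$ one obtains
\[
K^{-2}\, \frac{m(f^n(A \cap C_n))}{m(\mathbb I)} \;\le\; \frac{m(A \cap C_n)}{m(C_n)} \;\le\; K^2\, \frac{m(f^n(A \cap C_n))}{m(\mathbb I)}.
\]
If $A$ were $f$-invariant with $0 < m(A) < m(\mathbb I)$, the Lebesgue density theorem applied to $A^c$ would produce a cylinder $C_n$ in which the density of $A$ is arbitrarily small; by $f$-invariance, $f^n(A\cap C_n) \subset A$, so the left inequality above forces $m(A)$ to be arbitrarily small, a contradiction. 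Hence $\mu$ is $f$-ergodic, and uniqueness among probability measures equivalent to Lebesgue measure is immediate (two distinct ergodic measures in the same measure class would have to be mutually singular).

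The main obstacle is pushing bounded distortion through despite only eventual rather than uniform expansivity: one passes to the iterate $f^r$ that is strictly expansive to obtain geometric control over the telescoping sum, then transfers the estimate back to arbitrary iterates of $f$ by absorbing a bounded multiplicative constant. This is technical bookkeeping rather than a conceptual difficulty. Fullness of cylinders (i) enters essentially in both the existence step (to identify $\mathcal L^n \mathbf 1$ pointwise via bijections of cylinders onto $\mathbb I$) and the ergodicity step (to relate $m(A \cap C_n)$ to $m(f^n(A\cap C_n))$ without boundary losses); relaxing this assumption is precisely the aim of the paper's Theorem~\ref{t:evenExpanErgoNaturally}.
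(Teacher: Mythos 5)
The paper does not prove Theorem~\ref{t:adler}: it is quoted as a known result, Adler's ``folklore theorem'', citing \cite{AdlerRevisit, AdlerCF} and Adler's afterword to \cite{BowenInvMeas}; the only commentary in the text is Remark~\ref{rmk:adlerFolkAndEquiv}, which makes exactly your closing observation that uniqueness follows from existence because two distinct ergodic measures are mutually singular. So there is no internal proof to compare against; the question is only whether your blind reconstruction is a sound proof of the cited statement, and in outline it is the standard one: R\'enyi--Adler bounded distortion via telescoping and condition (iv), the identity for $\mathcal L^n\mathbf 1$ over full rank-$n$ cylinders together with $|(f^n)'|\asymp m(\mathbb I)/m(C_n)$ to get two-sided bounds, a Ces\`aro/weak-$\ast$ limit for the invariant density, and a Lebesgue-density (Knopp-type) argument for ergodicity.

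Two points should be tightened. First, in the ergodicity step the inclusion $f^n(A\cap C_n)\subset A$ points the wrong way: your left-hand inequality bounds $m(f^n(A\cap C_n))$ from above by a small quantity, and an inclusion into $A$ does not convert that into smallness of $m(A)$. What you need is the equality $f^n(A\cap C_n)=A$ up to null sets, which does hold here because invariance gives $A=f^{-n}A$ modulo null sets and $C_n$ is full, so $f^n(f^{-n}A\cap C_n)=A\cap f^n(C_n)=A$; with that one-line fix the contradiction goes through. Second, the reduction from eventual to uniform expansivity, which you correctly flag as bookkeeping, still requires checking that condition (iv) is inherited by the expansive iterate $f^r$ (a standard lemma combining (iv) for $f$ with the geometric decay of cylinder diameters), and, if you construct the density for $f^r$ first, that averaging $\tfrac1r\sum_{k<r}\mathcal L^k h$ back to an $f$-invariant density preserves the two-sided bounds (it does, since bounded distortion on any fixed full branch gives each $\mathcal L^k$ a uniform positive contribution). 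With these details supplied your argument is a complete proof and is consistent with the sources the paper cites.
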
 

\begin{Rmk}\label{rmk:adlerFolkAndEquiv}  Note that there can be at most one ergodic probability measure for $f$ that is equivalent to Lebesgue measure.  The reason for this is that any two ergodic measures are mutually singular; see say \cite{einWar}.   Thus, the existence result in the theorem implies the uniqueness result.  
\end{Rmk}

\subsection{Statement of first main result}\label{ss:bddAndFullGivesErgod}
  Our first main result shows that, in short, boundedness of fibers of $\Omega$ and a full cylinder for the given interval map implies ergodicity and more.

\subsubsection{Cylinder covering property}   We introduce a condition  inspired by a condition introduced by Ito-Yuri \cite{ItoYuri}.   Their {\em finite range property} holds for a map $f$   if there is a finite set of measurable subsets, say $\mathcal R = \{V_0, \dots, V_N\}$ of the interval such that for every $n \in \mathbb N$ the image under $f^n$ of any rank $n$ cylinder is in $\mathcal R$.  Of course, if every cylinder is full, then that $f$ has the property as is shown by letting $\mathcal R$ be the singleton consisting of the interval of definition itself.    That is, the finite range property  is a weakening of Adler's condition of having full cylinders.       We introduce a property implied by the finite range property  whenever there are infinitely many full cylinders for $f$.
\medskip

\begin{Def}\label{d:Def} We say that an interval map has {\em bounded non-full range} if there is a full cylinder such that the orbits of the endpoints of all  non-full  cylinders  (and hence of all cylinders) avoid  the interior of this full cylinder.    
\end{Def} 

  Thus,  under this property,   the range under all positive compositional powers of the interval map of each endpoint of any non-full cylinder   is  bounded away from  the interior of some full cylinder. 
\medskip 

 To illustrate the ease of verification of our property,  we show that the bounded non-full range property holds for a large subset of one of the most studied families of continued fraction maps,   the Nakada $\alpha$-continued fractions  \cite{N}.    Recall the  review of these  in \S~\ref{ssHitoshi}.

 \begin{Lem}\label{p:NakadaBddRange}  Let $\mathcal E$ denote the exceptional set for Nakada's $\alpha$-continued fractions.  Both every rational $\alpha \in (0,1]$ and every $\alpha \in \mathcal E$  is such that Nakada's $\alpha$-continued fraction map $T_{\alpha}$ has bounded non-full range.            
 \end{Lem}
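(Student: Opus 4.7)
The plan is to break the proof into a structural reduction followed by two case analyses.

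First, I would clarify the cylinder structure of $T_\alpha$. A direct computation shows that on $\Delta_\alpha(\varepsilon, d)$ the map acts as $x \mapsto \varepsilon/x - d$, sending $\Delta_\alpha(\varepsilon, d)$ bijectively onto all of $\mathbb{I}_\alpha$ as soon as neither endpoint of the cylinder coincides with $\alpha - 1$ or $\alpha$. Therefore the only two cylinders that can possibly fail to be full are the two extreme cylinders (containing $\alpha - 1$ and $\alpha$ as endpoints), and in particular there are infinitely many full cylinders. Next, the interior endpoints of the cylinder partition are precisely the discontinuity points of $T_\alpha$, where $|1/x| + 1 - \alpha$ crosses an integer; at any such point the two one-sided values of $T_\alpha$ are $\alpha - 1$ and $\alpha$. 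Consequently, after one iteration every forward orbit of a cylinder endpoint is contained in $\mathcal{O}_\alpha := \bigcup_{n\ge 0} T_\alpha^n(\{\alpha - 1, \alpha\})$, and the bounded non-full range property is equivalent to finding a full cylinder whose interior is disjoint from $\mathcal{O}_\alpha$.

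For the rational case $\alpha = p/q \in \mathbb{Q} \cap (0,1]$, I would argue that $\mathcal{O}_\alpha$ is finite. Each branch of $T_\alpha$ is a M\"obius map $M_{(\varepsilon:d)}$ with integer entries, so iterates of $\alpha$ stay in $\mathbb{Q}$; moreover, since the iterates are confined to the bounded interval $\mathbb{I}_\alpha$, their denominators are bounded in terms of $q$, leaving only finitely many rationals available. With $\mathcal{O}_\alpha$ finite and infinitely many disjoint full cylinders at our disposal, we may simply pick one whose interior avoids this finite set.

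For $\alpha \in \mathcal{E}$ the set $\mathcal{O}_\alpha$ may be infinite, and here one must show it is contained in a proper closed subset of $\mathbb{I}_\alpha$ whose complement contains some full cylinder. My plan is to invoke the refined orbit analysis of \cite{KraaikampSchmidtSteiner} (complemented by the tree combinatorics of \cite{CT}), which describes the admissible $\alpha$-expansions of $\alpha$ and $\alpha - 1$ under the failure of the matching condition $T_\alpha^k(\alpha) \neq T_\alpha^{k'}(\alpha - 1)$. From that description one can extract a prefix constraint on admissible words that rules out some specific full cylinder $\Delta_\alpha(\varepsilon_\ast, d_\ast)$ from the orbit; explicitly, one produces an admissible $(\varepsilon_\ast, d_\ast)$ that cannot occur as the first digit in the $\alpha$-expansions of any $T_\alpha^n(\alpha)$ or $T_\alpha^n(\alpha-1)$, so the interior of that cylinder is disjoint from $\mathcal{O}_\alpha$.

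The principal obstacle is the $\alpha \in \mathcal{E}$ case: one must show cleanly that non-matching forces the orbits of $\alpha$ and $\alpha - 1$ to avoid the interior of at least one full cylinder, without re-deriving the extensive combinatorics of $\alpha$-admissible sequences. Care is also needed at boundary parameters of matching intervals inside $\overline{\mathcal{E}}$, where the orbit behaviour is most delicate; here I would track the first-digit constraints imposed on both endpoint orbits simultaneously, using the symmetry $x \leftrightarrow -x$ to handle the sign $\varepsilon$ and reducing to a finite case check on the digit $d_\ast$.
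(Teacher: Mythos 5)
Your structural reduction and your treatment of the rational case are sound and essentially agree with the paper: only the two extreme cylinders can be non-full, every cylinder-endpoint orbit is absorbed after one step into the orbits of $\alpha-1$ and $\alpha$, and for $\alpha\in\mathbb Q$ these orbits are finite (the paper phrases this as both endpoints having finite $\alpha$-expansion terminating at $0$; your bounded-denominator argument should really be run through the determinant-$\pm1$ structure of $M_{(\varepsilon:d)}$ rather than mere boundedness of $\mathbb I_\alpha$, but the conclusion is the standard fact and is fine).

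The genuine gap is the case $\alpha\in\mathcal E$, which is the heart of the lemma and which you leave as a plan rather than a proof: ``one can extract a prefix constraint on admissible words that rules out some specific full cylinder'' is precisely the statement to be established, and you yourself flag it as the principal obstacle without resolving it. The mechanism the paper uses is concrete and different in flavor from a prefix/first-digit exclusion: one first invokes the characterization (from \cite{CT}, verified in \cite{KraaikampSchmidtSteiner}, Lemma~6.8) that $\alpha\in\mathcal E$ if and only if $T_1^n(\alpha)\ge\alpha$ for all $n$, which forces the regular continued fraction digits of $\alpha$ to satisfy $a_n\le a_1$ and hence to take only finitely many values; then (\cite{KraaikampSchmidtSteiner}, Proposition~4.1) gives that the $\alpha$-digits of the orbit of $\alpha-1$ lie in a finite set, and (\cite{KraaikampSchmidtSteiner}, Lemma~6.7) transfers this to the orbit of $\alpha$ itself. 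Since digits index cylinders, the two (typically infinite) endpoint orbits are confined to finitely many cylinders and therefore miss the interiors of all but finitely many of the infinitely many full cylinders, which is exactly bounded non-full range. Without identifying this bounded-digit mechanism (or an equivalent one), your exceptional-set case is an appeal to unspecified combinatorics and does not constitute a proof; the extra worry about boundary parameters of matching intervals and the $x\leftrightarrow -x$ symmetry is not what is needed and does not substitute for it.
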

\begin{proof}  Whenever $\alpha \in \mathbb Q \cap (0,1]$,  the endpoints of $[\alpha-1, \alpha)$ have finite $\alpha$-expansion, both eventually reaching zero.   Since for any $\alpha$, of the infinitely many cylinders of $T_{\alpha}$, the only possible non-full cylinders are those of these endpoints. Furthermore,  whether each is non-full or not depends only on the image of $\alpha-1$ or  $\alpha$, respectively.   For these rational values of $\alpha$, we thus have that the corresponding $T_{\alpha}$  have bounded non-full range.    

Recall that $T_1$ is the regular continued fraction map.    In the proof of (\cite{KraaikampSchmidtSteiner}, Lemma~6.8;  note that matching intervals are called synchronization intervals in that paper)  a result of   \cite{CT} is verified: $\alpha \in \mathcal E$ if and only if $T_{1}^n(\alpha) \ge  \alpha$ for all $n \in \mathbb N$.  In particular, if $\alpha \in \mathcal E$ then its regular continued fraction expansion is of the form $[0; a_1, a_2, \dots]$ with $a_1 \ge a_n$ for all $n>1$ and in particular the $a_i$ take on only finitely many values.   Now,  given this expansion of $\alpha \in \mathcal E$,  from (\cite{KraaikampSchmidtSteiner}, Proposition~4.1) the $\alpha$-expansion of $\alpha-1$ has digits contained in a finite set and hence from   (\cite{KraaikampSchmidtSteiner} Lemma~6.7) also the $\alpha$-digits  of $\alpha$ itself are contained in a finite set.     Since the $\alpha$-digits correspond to cylinders,   $T_{\alpha}$ is of bounded non-full range.    
\end{proof} 

 The above illustrates a setting where the finite range property fails, but our property holds.  This is due to the fact that although for any $\alpha\in \mathcal E$ we have that both endpoints of $\mathbb I_{\alpha}$ have bounded digits, their orbits include infinitely many distinct points.
\medskip

 We further illustrate the verification of the property.
\begin{Eg}\label{eg:BddRangeCKS}   As recalled in \S~\ref{ss:CKS},  in \cite{CaltaKraaikampSchmidt} we study families of maps $T_{n,\alpha}, n>3, \alpha \in [0,1]$ related to certain Fuchsian triangle groups, $G_n$.  Fixing $n$, and   $\alpha$,  the corresponding maps are of the basic form $x \mapsto A^kC^l\cdot x$, for various integers $k,l$, where $A, C \in G_n$ are explicitly given in \S~\ref{ss:CKS}.  For each $n$, there is a real value $\gamma_n$ such that for all $\alpha<\gamma_n$, the map indexed by $\alpha$ is of the simpler form $x \mapsto A^kC\cdot x$.

 Fix $n$ and   a `small' $\alpha$, thus  $\alpha \in (0,\gamma_{n})$. In this setting, --- similar to the case of the Nakada $\alpha$-fractions ---  there are only at most two non-full cylinders, the leftmost cylinder whose left endpoint is $\ell_0(\alpha)$ and the rightmost with right endpoint $r_0(\alpha)$.  
 Suppose further that $\alpha$ is in the exceptional set $\mathcal E_n$. 
 Subsection~4.5 of \cite{CaltaKraaikampSchmidt} shows that  the $T_{n,\alpha}$-orbit of $\ell_0(\alpha)$ meets only the two leftmost cylinders, while the $T_{n,\alpha}$-orbit of $r_0(\alpha)$ meets only the two  rightmost cylinders.         From this, we find that each of these maps has bounded non-full range.  Indeed, here there are infinitely many full cylinders avoided by the orbits in question.   
\end{Eg}

\subsubsection{Statement of result} 
 \begin{Thm}\label{t:evenExpanErgoNaturally} Suppose that  $T$ is a piecewise M\"obius map on an interval $I$ of finite Lebesgue measure and $\mathcal T: \Omega \to \Omega$ is a planar extension for $T$  
 such that 
 \begin{enumerate}
\item[a)]   the vertical fibers of $\Omega$  are of positive Lebesgue measure bounded away from both zero and infinity;
\item[b)]  the vertical fibers are bounded away from the locus of $y = -1/x$;
\item[c)]  $T$ has at least one full cylinder for which the set of ratios of 
the Lebesgue measure  of the $\mathcal T$-image of each vertical fiber above this cylinder to the Lebesgue measure of its receiving fiber is bounded away from zero and one;
\item[d)]  $T$ has bounded non-full range.
\end{enumerate}

 Let $\mu'$  be the normalization of $\mu$ to a probability measure on $\Omega$ and $\nu$ be the marginal measure  of $\mu'$.  Also let $\mathscr B, \mathscr B'$ denote the Borel algebras of $I, \Omega$ respectively. Then

 \begin{enumerate}
 \item[i.)]   $0< \mu(\Omega) < \infty$;
 \item[ii.)]     $T$ is eventually expansive;
 \item[iii.)]     $T$ is ergodic with respect to $\nu$;
 \item[iv.)]       the system $(\mathcal T, \Omega, \mathscr B', \mu')$ is the natural extension of  $(T, I, \mathscr B, \nu)$.  In particular,  the two dimensional system is also ergodic.  
\end{enumerate}
\end{Thm}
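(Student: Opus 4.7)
The plan is to establish (i) by a direct measure estimate, then use the bounded non-full range property to pass to the first return map $T_K$ on the full cylinder $K$ furnished by (c) and (d), arrange that $T_K$ satisfies Adler's Folklore Theorem (Theorem~\ref{t:adler}) so as to deduce (iii), derive (ii) as a by-product, and finally settle (iv) by the classical fiber-contraction criterion for natural extensions. For (i), condition (a) gives $m_0\le|F_x|\le m_1$ on the vertical fibers; condition (b) gives $\delta>0$ with $|1+xy|\ge\delta$ on $\Omega$; the existence of a full cylinder forces $\pi(\Omega)=I$ of finite Lebesgue measure. Integrating the density gives
\[ 0\;<\;\frac{m_{0}|I|}{\sup_{\Omega}(1+xy)^{2}}\;\le\;\mu(\Omega)\;\le\;\frac{m_{1}|I|}{\delta^{2}}\;<\;\infty. \]

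For (iii), let $K$ be a full cylinder witnessing both (c) and (d). The key structural observation is that every cylinder of $T_K$ is full. Indeed, let $C$ be a $T_K$-cylinder of return time $r$; its endpoints lying in the interior of $K$ are preimages under the M\"obius composition $M_{b_{r-1}}\cdots M_{b_{1}}$ of $T$-cylinder endpoints hit along the orbit, so the $T^{r}$-images of those endpoints lie in forward $T$-orbits of endpoints of single $T$-cylinders. If such an endpoint belongs to a full $T$-cylinder, its forward image lies in $\partial I\cup\partial K$; if it belongs to a non-full $T$-cylinder, (d) forbids any forward image from entering the interior of $K$, hence it lies in $\partial K$. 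In either case $T_K(C)=K$. The piecewise-M\"obius structure together with (b), which uniformly bounds $cx+d$ away from zero on each cylinder, supplies the Adler-type distortion bound. To obtain the expansivity hypothesis, I pass to the Lebesgue picture $\widehat{\mathcal T}$ on $\Sigma=\mathcal Z(\Omega)$; (b) makes $\mathcal Z$ bi-Lipschitz on $\Omega$, so fiber Lebesgue lengths $\rho(x)=|\Sigma_{x}|$ lie in a bounded positive range. Lebesgue invariance of $\widehat{\mathcal T}$ forces the Perron-Frobenius identity $\rho(y)=\sum_{x\in T_K^{-1}(y)}\rho(x)/|T_K'(x)|$. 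Condition (c), transported to the Lebesgue picture, says that the contribution of the $K$-branch absorbs a definite fraction strictly less than one of the fiber mass of the receiving fiber; iterating this strict loss at each passage through $K$ produces $n$ with $|(T_K^{n})'|>c>1$ uniformly. Adler's theorem then gives a unique ergodic $T_K$-invariant probability measure $\nu_K$ equivalent to Lebesgue. By Abramov's formula and the Kac return-time construction, $\nu_K$ lifts to an ergodic $T$-invariant probability measure on $I$ which must coincide (up to normalization) with the marginal $\nu$ of $\mu'$, proving (iii). Simultaneously (ii) follows: eventual expansivity on $K$ together with Poincar\'e recurrence of $(\widehat{\mathcal T},\Sigma)$---which forces $\mu$-a.e.\ orbit to enter $K$---and uniform lower bounds on M\"obius derivatives between visits (from (a)--(b)) propagate the expansivity to $T$ itself.

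Finally for (iv), the classical criterion for a natural extension is $\bigvee_{n\ge 0}\mathcal T^{n}(\pi^{-1}\mathscr B)=\mathscr B'$, equivalently that $\mathcal T^{-n}$ contracts vertical fibers to points. Because $\widehat{\mathcal T}$ preserves Lebesgue, the $y$-Jacobian of $\mathcal T$ is reciprocal to $|T'(x)|$, so the eventual expansivity of (ii) transfers to eventual fiber contraction under $\mathcal T^{-1}$; together with the M\"obius distortion control from (b), this forces fiber diameters to shrink uniformly, yielding (iv). The main obstacle in the entire program is the extraction of uniform eventual expansivity of $T_K$ from the ratio condition (c): turning the Perron-Frobenius identity into a pointwise derivative bound requires all four hypotheses (a), (b), (c), (d) working in concert, and is where the planar extension machinery is truly essential.
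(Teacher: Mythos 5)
Your overall architecture (induce on a full-cylinder region, verify Adler's conditions there, then pull ergodicity back and settle the natural extension by fiber contraction) is the paper's strategy, but the step you treat as a by-product is where the real content lies, and your argument for it fails. You deduce eventual expansivity of $T$ from eventual expansivity of the induced map $T_K$ via ``Poincar\'e recurrence plus uniform lower bounds on M\"obius derivatives between visits.'' Eventual expansivity requires a \emph{fixed} iterate $r$ and a constant $c>1$ valid for all $x$ in the domain of $T^r$; recurrence gives no uniform bound on the entry/return time to $K$, and the only derivative bound available from (a)--(b) is $|T'|\ge b/B\le 1$, so over long excursions away from $K$ the contraction can swamp any expansion accumulated at visits. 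No fixed $r$ with a uniform constant comes out of this. The paper proves (ii) \emph{first} and directly for $T$: because the cylinder in (c) is full and its branch fills at least a fraction $1-\rho$ of every receiving fiber (this is where the \emph{lower} bound in (c), which you never use, enters), bijectivity of $\mathcal T$ forces \emph{every} branch to satisfy $|\widehat{\mathcal T}(F_x)|\le\rho\,|F_{Tx}|$; telescoping gives $|\widehat{\mathcal T}^r(F_x)|\le\rho^r B$, and choosing $r$ with $\rho^{r-1}B<b$ yields $|(T^r)'|>\rho^{-1}$ uniformly. Without (ii) for $T$ itself, your expansivity input to the induced map is fine, but conclusion (ii) of the theorem is unproved, and your step (iv) (which invokes expansivity of $T$, not of $T_K$, to contract fibers along arbitrary orbits) inherits the gap.

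Two further points. First, you take ``a full cylinder $K$ witnessing both (c) and (d)''; the hypotheses do not say these are the same cylinder, and the paper keeps them separate: the (c)-cylinder is used only for the fiber-ratio/expansivity argument, while (d) supplies the interval $J$ of full cylinders avoided by the orbits of endpoints of non-full cylinders, and it is on $J$ that one induces and gets full cylinders for the induced map. Second, Adler's condition (iv), a uniform bound on $|f''|/f'^2=2|c(cx+d)|$ over the \emph{countably many} branches of the induced map, does not follow from your claim that (b) bounds $cx+d$ away from zero on each cylinder; the paper derives it from the planar extension: boundedness of the fibers of $\mathcal Z(\widetilde\Omega)$ plus $|\widetilde T'|^{-1}\le B/b$ bounds $(cx+d)^2y$ and the image ordinates $(cx+d)^2y-c(cx+d)$, hence bounds $c(cx+d)$ uniformly. (A minor slip in (iv): it is the forward map $\mathcal T$, not $\mathcal T^{-1}$, that contracts vertical fibers, since the $y$-Jacobian in the Lebesgue picture is $1/|T'(x)|$.)
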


That $\mu(\Omega)$ is finite is easily seen.  We prove the remaining conclusions in three steps. 

\bigskip 

 \subsection{Eventual expansivity}
 \begin{Prop}\label{p:evenExpan}  Under the hypotheses (a)--(c) of Theorem~\ref{t:evenExpanErgoNaturally}, 
 $T$ is eventually expansive.
\end{Prop}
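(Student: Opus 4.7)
The plan is to extract expansion of $T$ from uniform contraction of vertical fibers of $\Omega$ under $\mathcal T$, using the Jacobian structure of a M\"obius planar extension.

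First I would note that hypotheses (a) and (b) together bound the density $1/(1+xy)^{2}$ of $\mu$ between two positive constants on $\Omega$, so that $\mu$ is comparable to Lebesgue restricted to $\Omega$. The $\mu$-invariance of $\mathcal T$, which acts on the cylinder above each $K_{\beta}$ as $(x,y)\mapsto (M_{\beta}\cdot x,N_{\beta}\cdot y)$, then gives the pointwise Jacobian identity
\[
 |M_{\beta}'(x)|\cdot|N_{\beta}'(y)|\;=\;\frac{(1+x'y')^{2}}{(1+xy)^{2}},
\]
whose right side is uniformly comparable to $1$ on $\Omega$. Iterating along an orbit $(x_i,y_i)=\mathcal T^{i}(x_0,y_0)$ gives
\[
 |(T^{n})'(x_0)|\cdot\bigl|(N_{x_{n-1}}\!\circ\cdots\circ N_{x_0})'(y_0)\bigr|\;\sim\;1,
\]
with constants independent of $n$ and $(x_0,y_0)$.

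Next, for each $x_0$ I would integrate this identity in $y_0$ over the vertical fiber $F_{x_0}$: the resulting $y$-integral is the Lebesgue length $\ell_n:=|\mathcal T^{n}(\{x_0\}\times F_{x_0})|$ of the image segment, which sits inside the fiber $\{x_n\}\times F_{x_n}$. A bounded-distortion estimate for the M\"obius composition on the vertical fibers, which holds uniformly in $n$ because intermediate fibers stay bounded by (a) and at positive distance from the relevant poles by (b), upgrades the integral identity into the pointwise comparison
\[
 |(T^{n})'(x_0)|\;\sim\;\frac{|F_{x_0}|}{\ell_n}.
\]
The same distortion argument applied to the sub-interval $F_i' := N_{x_{i-1}}\!\cdots N_{x_0}(F_{x_0})\subset F_{x_i}$, together with hypothesis (c), then yields: whenever $x_i\in K$, the fill ratio $\ell_{i+1}/|F_{x_{i+1}}|$ strictly contracts by a uniform factor $1-\epsilon'\in(0,1)$, while this ratio is nonincreasing in $i$ in general. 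Hence $|(T^{n})'(x_0)|\gtrsim(1-\epsilon')^{-k(x_0,n)}$, where $k(x_0,n)$ counts the visits of the orbit to $K$ before time $n$.

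The main obstacle will be to promote this orbit-wise estimate to a uniform bound $|(T^{r})'(x)|\ge c>1$ valid for every $x$ in the (cocountable) domain of $T^{r}$. For this I would first invoke Poincar\'e recurrence applied to the positive $\mu$-measure set above $K$ in the finite-measure system $(\Omega,\mathcal T,\mu)$: the $T$-orbit of Lebesgue-a.e.\ $x_0$ enters $K$ infinitely often, so $|(T^{n})'(x_0)|\ge c$ holds Lebesgue-a.e.\ for $n$ large. Since $|(T^{n})'|$ is continuous on each open rank-$n$ cylinder of $T^{n}$, the closed set on which the lower bound holds has full Lebesgue measure in each such cylinder and so is dense in it, hence fills the cylinder. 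The remaining delicate step is the uniformity of a single $r$ across all rank-$r$ cylinders; this I expect to obtain by a finite combinatorial argument showing that rank-$n$ cylinders whose combinatorial orbit never meets $K$ must satisfy $|(T^{n})'|\ge c$ on their own for $n$ large, thanks to the finite $\mu$-mass of $\Omega$ forcing the corresponding planar images to thin out and the attached M\"obius denominators to force the required expansion.
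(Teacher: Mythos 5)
Your orbit-wise estimate stops short of the statement you need to prove, and the place where you acknowledge the difficulty is a genuine gap, not a routine loose end. Eventual expansivity requires a single $r$ and a single $c>1$ with $|(T^r)'(x)|\ge c$ for \emph{every} $x$ in the domain of $T^r$. Your mechanism only produces contraction of the fill ratio when the orbit visits the full cylinder $K$, so Poincar\'e recurrence gives a bound $|(T^{n})'(x_0)|\ge c$ only for a.e.\ $x_0$ and with $n$ depending on $x_0$ (through how soon its orbit accumulates visits to $K$); a.e.\ statements with $x$-dependent times cannot be upgraded to a uniform $r$ by the continuity-on-cylinders argument, because a rank-$r$ cylinder whose combinatorial itinerary avoids $K$ for all $r$ steps receives no lower bound at all from your estimate ($k(x,r)=0$ there). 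The closing sketch --- that such cylinders must expand ``on their own'' because the finite $\mu$-mass of $\Omega$ forces their planar images to thin out --- is not justified: finiteness of $\mu(\Omega)$ controls nothing pointwise along a fixed cylinder, and no argument is given tying the thinning of those images to a definite derivative bound.

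The missing idea, which is how the paper proceeds, is that the per-step contraction is uniform for \emph{every} $x$, not only for $x$ in the full cylinder, so no recurrence or combinatorial repair is needed. Work with the Lebesgue extension $\widehat{\mathcal T}=\mathcal Z\circ\mathcal T\circ\mathcal Z^{-1}$ on $\Sigma=\mathcal Z(\Omega)$, where the derivative of $\widehat{\mathcal T}$ along a vertical fiber $F_x$ is constant, equal to $|T'(x)|^{-1}$ (this exactness also replaces your bounded-distortion step). By (a), (b) the fiber lengths lie in some $[b,B]$, and by (c) there is $\rho\in(0,1)$ with $1-\rho\le |\widehat{\mathcal T}(F_{x'})|/|F_{Tx'}|\le\rho$ for all $x'\in K$. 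Now use the bijectivity of $\widehat{\mathcal T}$ on $\Sigma$: up to null sets, $F_z$ is the disjoint union of the images $\widehat{\mathcal T}(F_x)$ over the preimages $x$ of $z$, and since $K$ is full there is always a preimage $x'\in K$ whose image already occupies at least $(1-\rho)|F_z|$. Hence $|\widehat{\mathcal T}(F_x)|\le\rho|F_{Tx}|$ for every $x\in I$ (for $x\notin K$ by this displacement argument, for $x\in K$ by the upper bound in (c)). Because the fiber derivative is constant, these ratios telescope: $|\widehat{\mathcal T}^{\,n}(F_x)|\le\rho^{\,n}|F_{T^nx}|\le\rho^{\,n}B$ for all $x$ and $n$, so choosing $r$ with $\rho^{\,r-1}B<b$ gives $|\widehat{\mathcal T}^{\,r}(F_x)|<\rho|F_x|$ and therefore $|(T^r)'(x)|>\rho^{-1}>1$ uniformly in $x$, which is the desired eventual expansivity.
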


 \begin{proof}  Let $\widehat{\mathcal T}: \Sigma \to \Sigma$ denote the conjugate two-dimensional system where $\Sigma$ is the image of $\Omega$ under  the map $\mathcal Z(x,y)$ of \eqref{eqZmap}.   Lebesgue measure is invariant for this conjugate system.   In particular,  for each $x \in I$ the vertical fiber  $F_x \subset \Sigma$ projecting to $x$ is mapped by $\widehat{\mathcal T}$ into the vertical fiber   $F_{Tx}$,  with derivative along $F_x$ constantly equal to $(T'(x))^{-1}$.    Hypotheses  (a) and (b) imply that there are positive finite bounds $0 < b<B$  on the one-dimensional Lebesgue measure of the $F_x$.       
   
The third hypothesis also carries over to the conjugate system.   Let us temporarily use vertical bars to indicate the one-dimensional Lebesgue measure on vertical fibers of $\Sigma$.   Denoting the chosen full cylinder by $\mathcal C$, we have that the set of ratios
\[ \Big\{ \dfrac{|\widehat{\mathcal T}(F_x) |}{|F_{Tx}|}\;:\; x \in \mathcal C\Big\}\]
is also bounded away from zero and one.    We can thus find a $\rho$ with  $0< \rho <1$ such that $1-\rho$ is  a lower bound and $\rho$ an upper bound.    

Now, for any $z \in I$,  the fiber at $z$ is the union of the images of the fibers over the preimages of $z$; that is, 
$F_z = \cup_{Tx = z}\, \widehat{\mathcal T}(F_x)$.   Given $x\in I\setminus \mathcal C$, set $z = Tx$.   Since  $\mathcal C$ is a full cylinder,  there is some $x'\in \mathcal C$ such that $T(x') = z$  and hence $\mid\widehat{\mathcal T}(F_{x'})\mid$ gives at least $1- \rho$ of $|F_{Tx}|$.     It follows that  $|\widehat{\mathcal T}(F_x) |   \le \, \rho |F_{Tx}|$.   Hence,  for all $x \in I$, we have   $|\widehat{\mathcal T}(F_x) |   \le \, \rho |F_{Tx}|$. 

 Recall that $\widehat{\mathcal T}$ has constant derivative along each vertical fiber.     
  Thus, ratios of measures are preserved; in particular
\[  \dfrac{ \mid\widehat{\mathcal T}^2(F_x) \mid}{\mid \widehat{\mathcal T}( F_{Tx} )\mid} =  \dfrac{ \mid\widehat{\mathcal T}(F_x ) \mid}{\mid F_{T x} \mid}.\]
Using a telescoping expansion and substituting the above, we deduce  
  
\[  \dfrac{ \mid\widehat{\mathcal T}^2(F_x) \mid}{\mid F_{T^2 x} \mid} =  \dfrac{ \mid\widehat{\mathcal T}(F_x ) \mid}{\mid F_{T x} \mid}\cdot \dfrac{ \mid\widehat{\mathcal T}(F_{Tx}) \mid}{\mid F_{T^2 x}\mid}  \le  \, \rho^2
\]
and similarly for higher powers.   
Now let $r \in \mathbb N$ be such that $\rho^{r-1} B < b$.     Then  for any $x \in I$ we have   $b \le |F_x|$ but  $|\widehat{\mathcal T}^r(F_x) | \le \rho^r  \mid F_{T^r x} \mid \le \rho^r B < \rho b$.    Thus,  $|\widehat{\mathcal T}^r(F_x) | <\rho  |F_x|$.   Since  $\widehat{\mathcal T}^r$ also preserves two-dimensional Lebesgue measure, we must have that  $\vert (T^r)'(x)\vert  >\rho^{-1}$, with now the vertical bars denoting the absolute value. Therefore,  $T^r$ is expansive.  
\end{proof} 

\subsection{Ergodicity}

 \begin{Prop}\label{p:ergodic} Under the hypotheses of Theorem~\ref{t:evenExpanErgoNaturally},
$T$ is ergodic with respect to   $\nu$.    
\end{Prop}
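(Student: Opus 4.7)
The plan is to induce on the full cylinder $\mathcal{C}$ provided by the bounded non-full range hypothesis~(d), apply Adler's Folklore Theorem~\ref{t:adler} to the induced map $T^* := T_{\mathcal{C}}$, and then transfer ergodicity back to $T$.

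From $\mathcal{T}$-invariance of $\mu'$, the marginal $\nu$ is $T$-invariant, and hypothesis~(a) forces $\nu$ to be equivalent to Lebesgue measure with Radon--Nikodym derivative bounded above and below on $I$. Let $\mathcal{C}$ be the distinguished full cylinder from hypothesis~(d); since $\nu(\mathcal{C})>0$, Poincar\'e recurrence ensures that $T^*$ is defined $\nu$-a.e., preserves the normalization of $\nu|_{\mathcal{C}}$, and that $\nu$-a.e.\ $T$-orbit visits $\mathcal{C}$ infinitely often. I would then verify the four hypotheses of Adler's theorem for $T^*$. Twice differentiability and boundedness of $|(T^*)''|/(T^*)^{\prime\,2}$ hold because every branch of $T^*$ is a composition of M\"obius transformations acting on an interval bounded away from each of their poles. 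Eventual expansivity of $T^*$ is inherited from Proposition~\ref{p:evenExpan} since every branch of $T^*$ is some iterate $T^k$.

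The crucial fullness condition is where hypothesis~(d) enters. Given a rank-one $T^*$-cylinder $J$ on which $T^*|_J = T^k|_J$, each endpoint of $J$ is a $T$-preimage (possibly of rank zero) of either $\partial \mathcal{C}$ or an endpoint of some $T$-cylinder, since these are the only ways a new branch of $T^*$ can open up along a return orbit of length $k$. Hence the image $T^k(\partial J)$ lies in the forward $T$-orbit of a cylinder endpoint, which by hypothesis~(d) avoids the interior of $\mathcal{C}$. Since $T^k(J) \subset \overline{\mathcal{C}}$ by definition of first return, both image endpoints must lie in $\partial \mathcal{C}$, and monotonicity of the M\"obius map $T^*|_J$ together with injectivity forces $T^*(J) = \mathcal{C}$. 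Adler's theorem now supplies a unique ergodic probability measure $\tilde \nu$ on $\mathcal{C}$ equivalent to Lebesgue; since every $T^*$-invariant subset of $\mathcal{C}$ is $\tilde \nu$-trivial, equivalence of $\nu|_{\mathcal{C}}$ with $\tilde \nu$ makes it $\nu|_{\mathcal{C}}$-trivial as well, so $\nu|_{\mathcal{C}}$ is itself $T^*$-ergodic.

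Finally, if $B \subset I$ is $T$-invariant with $\nu(B)>0$, then $B \cap \mathcal{C}$ is $T^*$-invariant and has positive $\nu$-measure (since a.e.\ orbit of a point in $B$ visits $\mathcal{C}$, by recurrence applied to $\nu|_{B}$), so $T^*$-ergodicity forces $B \cap \mathcal{C} = \mathcal{C}$ mod null, whence $B = I$ mod null by a second appeal to recurrence. The main obstacle I anticipate is the endpoint-tracking argument used to establish fullness of every $T^*$-cylinder: one must argue uniformly over all possible return words, explicitly invoking (d) to preclude $T^k(\partial J)$ from landing in the interior of $\mathcal{C}$ when the endpoint in question is born from a $T$-cylinder boundary interior to $\mathcal{C}$ or from a premature hit of $\partial\mathcal{C}$. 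The remaining verifications are routine consequences of the piecewise M\"obius structure together with the already-established eventual expansivity.
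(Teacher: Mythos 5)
Your overall strategy is exactly the paper's: induce on a full-cylinder region shielded by hypothesis (d), check Adler's conditions for the induced map, and transfer ergodicity back (the paper induces on the largest interval of full cylinders avoided by the endpoint orbits rather than on the single cylinder $\mathcal C$, but that difference is immaterial, and your endpoint-tracking argument for fullness of the induced cylinders is a more detailed version of the paper's one-line justification). The genuine gap is at Adler's condition (iv), which is the crux of the paper's proof and which you dismiss in one line. For a branch $x\mapsto M\cdot x$ with $M=\begin{pmatrix}a&b\\c&d\end{pmatrix}$, $\det M=\pm1$, one has $|f''|/f'^2 = 2\,|c|\,|cx+d|$, so the condition asks for a bound on $|c(cx+d)|$ \emph{uniform over the infinitely many return branches}, whose matrices have unbounded entries. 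Your justification -- that each branch is a composition of M\"obius maps acting on an interval bounded away from their poles -- is neither true in the basic examples (for the Gauss map the pole $x=0$ is an accumulation point of cylinders, and the same happens for the maps of \S~\ref{ss:CKS}) nor sufficient: a lower bound on the distance to the pole bounds $|f'|$ from above but says nothing about $c^2\,\mathrm{dist}(x,\mathrm{pole})$, and indeed the relevant quantity is $2/|M\cdot x - M\cdot\infty|$, a condition on the \emph{images}. This is precisely where the planar-extension hypotheses (a), (b) must be used: the paper passes to the Lebesgue-conjugated extension, notes that the fiber map has derivative $|\widetilde T'(x)|^{-1}\le B/b$, and from boundedness of the fibers and the formula \eqref{eqLebMap} extracts a uniform bound on $c(cx+d)$. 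Remark~\ref{rmk:inducing} explicitly warns that without this fiber control, inducing can make Adler's condition (4) fail (``explosion'' of return times), so this step cannot be waved through as routine.

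A second, smaller gap is in your transfer of ergodicity from $T^*$ back to $T$. Poincar\'e recurrence applied to $\nu|_B$ does not show that almost every point of an invariant set $B$ ever \emph{enters} $\mathcal C$; recurrence only returns points that are already in $\mathcal C$. What you need is a sweep-out statement, that $\bigcup_{n\ge0}T^{-n}\mathcal C$ has full measure, or a standard induced-map ergodicity theorem; the paper handles this by citing Schweiger (Theorem~17.2.4 of \cite{Schw}). As written, both appeals to ``recurrence'' in your last paragraph are unjustified. The remaining points (equivalence of $\nu$ with Lebesgue needs (b) as well as (a); eventual expansivity of the induced map via the restricted planar extension) are fine.
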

 \begin{proof}  Since $T$ has bounded non-full range, there is a largest interval, say  $J$, comprised of full cylinders avoided by the orbits of  the endpoints of all non-full cylinders.   Let 
$\widetilde T$ be  the first return map to $J$ of orbits of $T$. 
We will show that the conditions for the Adler result, Theorem~\ref{t:adler}, hold for $f = \widetilde T$.    Since $\nu$ is equivalent to Lebesgue, so is the probability measure it induces on $J$.     As per  Remark~\ref{rmk:adlerFolkAndEquiv}, the Adler result will then imply that this induced measure itself is ergodic. Ergodicity of a map induced from a general $f$ implies that  $f$ itself is also ergodic under reasonable hypotheses  (see Theorem~17.2.4 of \cite{Schw}).     Thus,  the ergodicity of $\nu$ will follow. Again by the remark, this implies that $\nu$ is the unique ergodic invariant measure for $T$ that is equivalent to Lebesgue measure.
 
 The cylinders of $\widetilde T$  are of the form $Q_{\beta}$ where $\beta = (a_1, \dots, a_m)$ with $a_1$ an index of a $T$-cylinder  meeting $J$, $[a_1, \dots, a_m]$ a rank $m$ cylinder for $T$, and $Q_{\beta} = [a_1, \dots, a_m] \cap T^{-m}(J)$.   By Poincar\'e recurrence,  up to measure zero $J$ is the union of
the $Q_{\beta}$.  Since $J$ consists of full cylinders for $T$ which the $T$-orbits of the endpoints of  $T$-cylinders never enter, each $Q_{\beta}$ is a full $\widetilde T$-cylinder.

    The corresponding planar map $\widetilde{\mathcal T}$ is bijective up to $\mu$-measure zero on $\widetilde{\Omega}$,  the region defined by deleting the portion of $\Omega$ projecting to the complement of $J$.     In particular,  the arguments in the proof of Theorem~\ref{t:evenExpanErgoNaturally} apply, and  thus $\widetilde T$ is eventually expansive.

\smallskip 
 We have ensured that $\widetilde T$ has full cylinders, and is eventually expansive.  Our construction also preserves the property of being twice differentiable.   The crux of the matter is thus to show that Adler's fourth condition holds.   Since $T$ is a piecewise M\"obius map,  certainly for $\nu$-a.e.  $x$,  there is some matrix $M = \begin{pmatrix}a&b\\c&d\end{pmatrix}$ such that $\widetilde T(x) = M\cdot x$.   The first derivative here is $\det M\, (c x + d)^{-2}$,  we must bound  $|c (c x + d)|$ over all such $x, M$.    
 
   Since the vertical fibers of $\mathcal Z(\Omega)$ are bounded, so are those of $\mathcal Z(\widetilde{\Omega})$.  That is, every  vertical fiber has Lebesgue measure in an interval $[b,B]$ bounded away from zero and infinity.      To avoid notational unpleasantries, let us  now use $\widehat{\mathcal T}$ to denote the conjugate of $\widetilde{\mathcal T}$ acting on $\mathcal Z(\widetilde{\Omega})$; see \eqref{eqLebMap}.     Recall that restricting   $\widehat{\mathcal T}$ to $F_x$ (the vertical fiber at $x$) defines a map whose derivative equals  $\vert \widetilde T'(x)\vert^{-1}$.  It follows that $\vert \widetilde T'(x)\vert^{-1} \le B/b$ for all $x \in I$.  Hence, the set of  values $(cx + d)^2 y$ is bounded.  The boundedness of the fibers directly implies that the values $(cx + d)^2 y - c (cx + d)$ are bounded.   We conclude that  $c (cx+d)$ is bounded throughout $I$.  This implies that Adler's  fourth condition holds.    Therefore,  $\widetilde T$ is ergodic and, as argued in the first paragraph  of this proof, the  ergodicity of $\nu$ holds.
\end{proof}

\begin{Rmk}\label{rmk:inducing}  Given the above argument, one could ask whether it is always possible to induce past non-full cylinders and be sure that Adler's conditions hold.    We strongly doubt this, as in general the return iteration number to the complement of those cylinders will be unbounded.   As Zweim\"uller \cite{Z}  states,  this in general  will cause Adler's condition (4) to fail.  In our setting, of course, such ``explosion" is impossible due to the boundedness of the vertical fibers of $\mathcal Z(\Omega)$.
\end{Rmk}
 
\subsection{Natural extension}\label{ass:natExt}   

Arnoux in particular has been a proponent of solving for planar presentations using properties of the interval map.    In particular,  the main results of \cite{ArnouxSchmidtNatExt} imply that  (1) if  a piecewise M\"obius interval map $T:I \to I$ is (eventually) expansive then its associated naive two-dimensional map $\mathcal T:  I\times \mathbb R \to I\times \mathbb R$ induces a contraction on the complete metric space of compact subsets of this product, where a modified Hausdorff metric is used.   (The contraction sends a compact $K$ to the closure of the union of the $\mathcal T_{\beta}(K_{\beta})$,   where $K_{\beta}$ is the portion of $K$ projecting to the $\beta$-cylinder.)  And, (2)   when the fixed point, say $\Omega$, of this contraction has positive measure and $\mathcal T$ is bijective on $\Omega$ up to measure zero, then this two-dimensional system is a planar natural extension of $T$.     

 We thus find the following. 
 \begin{Prop}\label{p:natExtFromASnatExt} Under the hypotheses of Theorem~\ref{t:evenExpanErgoNaturally}, 
 $(\mathcal T, \Omega, \mathscr B', \mu')$ is the natural extension of  $(T, I, \mathscr B, \nu)$. 
\end{Prop}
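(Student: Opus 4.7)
The plan is to apply the two main results of \cite{ArnouxSchmidtNatExt} recalled in the paragraph immediately preceding the proposition. Those results say: (1) eventual expansivity of $T$ makes the operator $K \mapsto \overline{\bigcup_{\beta} \mathcal T_\beta(K_\beta)}$ a contraction (in a suitable modified Hausdorff metric) on the compact subsets of $I \times \mathbb R$, with a unique fixed point which I denote $\Omega^{\ast}$; and (2) if $\mu(\Omega^{\ast}) > 0$ and $\mathcal T$ acts bijectively on $\Omega^{\ast}$ up to $\mu$-measure zero, then the corresponding two-dimensional system is a planar natural extension of $T$. So the proposition reduces to checking these hypotheses and, crucially, identifying our $\Omega$ with $\Omega^{\ast}$ up to a null set.

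First I would invoke Proposition~\ref{p:evenExpan} to obtain eventual expansivity, so that the contraction and its fixed point $\Omega^{\ast}$ are available. Next, the bijectivity of $\mathcal T$ on $\Omega$ up to a null set is built into the definition of planar extension, and this is part of the hypotheses of Theorem~\ref{t:evenExpanErgoNaturally}. Finally, $0 < \mu(\Omega) < \infty$ is clause (i) of that theorem: finiteness uses that $I$ has finite Lebesgue measure, the uniform upper bound on fiber lengths in hypothesis (a), and the fact that hypothesis (b) keeps the density $(1+xy)^{-2}$ bounded on $\Omega$; positivity uses the uniform positive lower bound in (a) together with positivity of that density.

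What remains is to identify $\Omega$ with $\Omega^{\ast}$ modulo a null set. Since $\mathcal T$ is bijective on $\Omega$ up to measure zero, $\Omega = \bigcup_{\beta} \mathcal T_\beta(\Omega_\beta)$ modulo a null set, where $\Omega_\beta$ is the portion of $\Omega$ lying over the $\beta$-cylinder for $T$; hence $\overline{\Omega}$ is essentially fixed by the Arnoux--Schmidt contraction. By uniqueness of the Banach fixed point in the modified Hausdorff metric of \cite{ArnouxSchmidtNatExt}, $\overline{\Omega}$ must coincide with $\Omega^{\ast}$ modulo measure zero. Under the paper's standing convention of working up to measure zero, the natural-extension conclusion of \cite{ArnouxSchmidtNatExt} transfers verbatim to $(\mathcal T, \Omega, \mathscr B', \mu')$, giving the claim. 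The main subtle point is this final identification: promoting the measure-theoretic $\mathcal T$-invariance of $\Omega$ to actual fixation by the Hausdorff-metric contraction. This is routine given the paper's convention, but it is where all the essential content of the Arnoux--Schmidt machinery is being invoked.
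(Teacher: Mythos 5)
Your proposal is correct and takes essentially the same route as the paper: the paper's entire argument consists of recalling the two results of \cite{ArnouxSchmidtNatExt} (eventual expansivity, supplied by Proposition~\ref{p:evenExpan}, makes the planar operator a contraction; positive finite measure plus bijectivity of the fixed domain yields a planar natural extension) and then asserting the proposition, exactly the reduction you describe. If anything you are more explicit than the paper, which silently identifies the given $\Omega$ with the fixed point of the Arnoux--Schmidt contraction (even using the same symbol for both), whereas your final paragraph at least names and argues that identification up to measure zero.
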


\subsection{Application to each of an infinite collection of maps}\label{ass:egAlpOne}   The goal of this subsection is to apply Theorem~\ref{t:evenExpanErgoNaturally} to  a family of maps whose possible ergodicity was unknown, and indeed, whose planar natural extensions had not   been determined.  We reach this goal in Corollary~\ref{c:accErg}.  

 Our maps are related to those studied in \cite{CaltaKraaikampSchmidt}, again refer to 
\S~\ref{ss:CKS}. The dynamics of the maps $T_{n,1}$ (among others) are presented in Section 3 of  \cite{CaltaKraaikampSchmidt}.     Here we give a planar extension for each of these maps.  Each is of infinite mass; by ``accelerating" each of the interval maps past the domain of the parabolic element of the underlying group whose fixed point is responsible for the infinitude of the mass, we  obtain an interval map of invariant probability measure, as verified by applying Theorem~\ref{t:evenExpanErgoNaturally}.

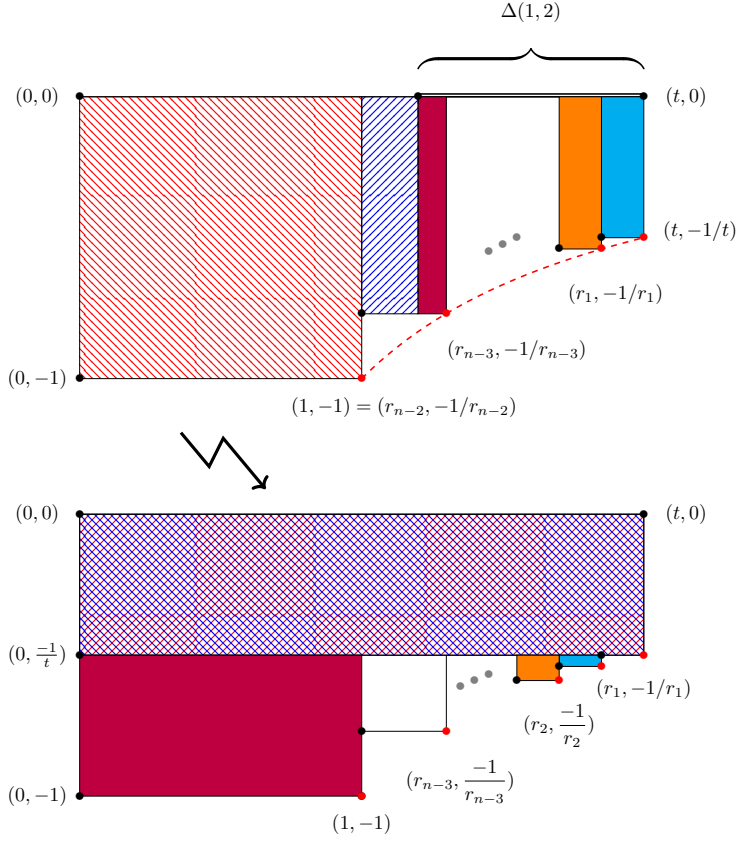
\begin{figure}[h]
\scalebox{.75}{
\noindent
\begin{tabular}{l}
\begin{tikzpicture}[x=5cm,y=5cm] 
\draw  (0.0, 0.0)--(2,0);
\draw[thick]  (1.2, 0.0)--(2,0);
\draw[thick]  (1.2, 0.01)--(2,0.01);
\draw[pattern=north west lines, pattern color=red]  (0,0)--(0,-1)--(1,-1)--(1,0) -- cycle;
\draw[pattern=north east lines, pattern color=blue]  (1,0)--(1,-0.77)--(1.2,-0.77)--(1.2,0) -- cycle;
\draw[fill=purple]  (1.2,0)--(1.2,-0.77)--(1.3,-0.77)--(1.3,0)--cycle;
\draw[fill=orange]  (1.85,0)--(1.85,-0.54)--(1.7,-0.54)--(1.7,0)--cycle;
\draw[fill=cyan]  (2,0)--(2,-.5)--(1.85,-.5)--(1.85,0)--cycle; 
\draw [dashed]  (1.2,-0.7)--(1.2,0); 
\draw [decorate, ultra thick,
    decoration = {calligraphic brace,mirror,amplitude=10pt}] (2,0.1) --  (1.2,0.1);
\node at (1.6, 0.3) {$\Delta(1,2)$};     
\node at (1.2,0) {$\bullet$};  
 \foreach \x/\y in {1.45/-0.55,1.5/-0.525, 1.55/-0.5%
} { \node at (\x,\y) [gray]  {$\bullet$}; } 
 \foreach \x/\y in {0/0,0/-1,  1/-0.77,   1.7/-0.54,  1.85/-0.5,   2/0%
} { \node at (\x,\y) {$\bullet$}; } 
 \foreach \x/\y in {1/-1, 1.3/-0.77, 1.85/-0.54,  2/-0.5 %
} { \node at (\x,\y) [red] {$\bullet$}; } 
\node at (-0.15,  0) {$(0,0)$};   
\node at (-0.15,  -1) {$(0,-1)$}; 
\node at (1.15,  -1.10) {$(1,-1) = (r_{n-2},-1/r_{n-2})$}; 
\node at (2.15,  0) {$(t,0)$}; 
\node at (2.2,  -0.475) {$(t,-1/t)$}; 
\node at (1.9,  -0.7) {$(r_1,-1/r_1)$}; 
\node at (1.55,  -0.9) {$(r_{n-3},-1/r_{n-3})$};
 \draw[domain= 1:2, smooth, variable=\x, red, thick, dashed] plot ({\x}, {-1/\x});
\end{tikzpicture}
\\
\begin{tikzpicture}[x=5cm,y=1cm] 
\node at (2, 0) {\phantom{here}};
\draw[->, ultra thick] (2.55, .2) -- (2.65, -0.4) -- (2.7, 0.1) -- (2.85,-0.8);
\end{tikzpicture}
\\
\begin{tikzpicture}[x=5cm,y=5cm] 
\draw  (0.0, 0.0)--(2,0);
\draw[pattern=north east lines, pattern color=red]    (0,0)--(2,0)--(2,-0.5)--(0,-0.5) -- cycle;
\draw[pattern=north west lines, pattern color=blue]  (0,0)--(2,0)--(2,-0.5)--(0,-0.5) -- cycle;
\draw  ( (1,-0.77)--(1.3,-0.77)--(1.3,-0.5);
\draw[fill=cyan]   (1.85,-0.5)--(1.85,-0.54)--(1.7,-0.54)--(1.7,-0.5)--cycle;
\draw[fill=orange]  (1.7,-0.5)--(1.7,-0.59)--(1.55,-0.59)--(1.55,-0.5)--cycle;
\draw[fill=purple]  (0,-0.5)--(0,-1)--(1,-1)--(1,-0.5)--cycle;
 \foreach \x/\y in {1.35/-0.61, 1.4/-0.59, 1.45/-0.57%
} { \node at (\x,\y) [gray]  {$\bullet$}; } 
%
 \foreach \x/\y in {0/0,0/-0.5, 0/-1, 1/-1, 1/-0.77,  1.55/-0.59, 1.7/-0.54,1.85/-0.5,  2/0%
} { \node at (\x,\y) {$\bullet$}; } 
 \foreach \x/\y in {1/-1, 1.3/-0.77, 1.7/-0.59, 1.85/-0.54,  2/-0.5 %
} { \node at (\x,\y) [red] {$\bullet$}; } 
\node at (-0.15,  0) {$(0,0)$};   
\node at (-0.15,  -0.5) {$(0,\frac{-1}{t})$}; 
\node at (-0.15,  -1) {$(0,-1)$}; 
\node at (1,  -1.10) {$(1,-1)$}; 
\node at (2.15,  0) {$(t,0)$}; 
\node at (2,  -0.62) {$(r_1,-1/r_1)$}; 
\node at (1.7,  -0.75) {$(r_2,\dfrac{-1}{r_2})$}; 
\node at (1.35,  -0.95) {$(r_{n-3},\dfrac{-1}{r_{n-3}})$};
 %
\end{tikzpicture}
%
\end{tabular}
}
\caption{Schematic representation of the domain $\Omega_{n, 1}$, and its image under $\mathcal T_{n, 1}, n\ge 3$ as discussed in Proposition~\ref{p:alpOneTwoDdom}.    Compare with (\cite{CaltaKraaikampSchmidt},  left side of Fig.~6).  Here, on the left side, the red dotted curve plots $y=-1/x$ the only points of $\Omega$ on this are the (red) vertices coming from the orbit of $(t, -1/t)$.  (Recall that our measure is given by $d \mu = (1+xy)^{-2}\, dx\, dy\,$ !)  Blocks fibering over intervals whose endpoints are consecutive members of the orbit of $r_0 = t$ under the interval map are filled with solid colors. Red hatching indicates blocks fibering over cylinders indexed by $(i,1), i \in \mathbb N$.  Blue hatching  indicates blocks fibering over cylinders indexed by $(j,2), j\ge 2$.   Images on the right hand side correspondingly colored, except that the cross-hatching indicates lamination from the hatched portions.}%
\label{f:omegaOne}%
\end{figure}

  Fix $n \ge 3$, let $T = T_{n,1}$ and $U = AC (AC^2)^{n-2}$, and note that $t = t_n = r_0(1)$.  From   \cite{CaltaKraaikampSchmidt}, one has $T^i(t) = (AC^2)^{n-2}\cdot t$ for $1 \le i \le n-2$ and   $T^{n-1}(t)= U\cdot t = t$.  Furthermore,  all of the cylinders of $T$ are full except for the right most cylinder,  $\Delta(1,2)$, of endpoints  $\mu +1/t = 1+1/t $ and $t$.   It is easily verified that $(AC^2)^{n-2}\cdot t = 1$. \\

Compare the following with Figure~\ref{f:omegaOne}.
 \begin{Prop}\label{p:alpOneTwoDdom}     Fix $n\ge 3$ and let $T = T_{n,1}$ and $\mathcal T$ be the usual associated two-dimensional map. 
Let   $r_0, r_1, \dots, r_{n-2}$ be the $T$-orbit of $r_0 = t$.  Then $\mathcal T$ is bijective up to $\mu$-measure zero on    
 \[\Omega = [0,1]\times [-1,0] \;  \cup \;  \bigcup_{i=1}^{n-2}\; [r_i, r_{i-1}]\times [-1/r_{i-1},0].\]
 \end{Prop}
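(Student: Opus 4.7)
The plan is to verify bijectivity of $\mathcal T$ on $\Omega$ (up to $\mu$-null sets) by partitioning $\Omega$ into rectangular slabs, one per $T$-cylinder, and checking that the $\mathcal T$-images of these slabs tile $\Omega$ without overlap. Concretely, I decompose $\Omega$ over the partition of $[0,t]$ into the $T$-cylinders $\Delta(k,l)$: the slab above $\Delta(k,l)$ is $\Delta(k,l) \times F_{(k,l)}$, where $F_{(k,l)} = [-1,0]$ when $\Delta(k,l) \subset [0,1]$, and $F_{(k,l)} = [-1/r_{i-1}, 0]$ when $\Delta(k,l) \subset [r_i, r_{i-1}]$ for some $1 \le i \le n-2$.

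On each slab the map $\mathcal T$ acts as $(x,y) \mapsto (M\cdot x,\, N \cdot y)$ with $M = A^{k}C^{l}$ and $N = RMR^{-1}$, each Möbius and monotone on the fiber in question, so the slab is sent to the rectangle $T(\Delta(k,l)) \times N \cdot F_{(k,l)}$. The $x$-image equals $[0,t]$ for every full cylinder, and equals $[0,1]$ for the unique non-full cylinder $\Delta(1,2)$, using $(AC^2)^{n-2}\cdot t = 1$ as recorded just before the statement. Consequently the $x$-projections of the slab images cover $[0,t]$ with multiplicities matching the fiber heights of $\Omega$: points of $(1,t]$ are hit only by full-cylinder slabs, while points of $[0,1]$ are hit by every cylinder.

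Next I verify the fiber condition: for each $x \in [0,t]$, the union of the intervals $N \cdot F_{(k,l)}$ over those cylinders whose $T$-image contains $x$ equals the prescribed fiber of $\Omega$ at $x$. Since every $N = RMR^{-1}$ fixes $0$, each image interval has upper endpoint $0$ and the images stack downward from $y=0$. The decisive coherence tool is the identity $N \cdot (-1/x) = -1/(M\cdot x)$, which shows that the curve $y = -1/x$ is pointwise $\mathcal T$-invariant; consequently the $\mathcal T$-orbit of $(t,-1/t)$ traces precisely the staircase corners $(r_i, -1/r_i)$ indicated in red in Figure~\ref{f:omegaOne}, and the lower endpoints of the various image intervals line up with the staircase structure of $\Omega$.

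The main obstacle I foresee is the bookkeeping over the staircase region $(1,t]$: one must show that for $x \in (r_i, r_{i-1}]$ exactly those full cylinders whose $T$-image contains $x$ contribute slabs whose $N$-images stack precisely to $[-1/r_{i-1}, 0]$, neither more nor less. This reduces to a combinatorial matching between the list of full cylinders of $T_{n,1}$ and the staircase levels, which follows from the explicit $T$-orbit description of $t$ recorded in \cite{CaltaKraaikampSchmidt}. Once this match is in hand, injectivity on the slabs follows from the monotonicity of $M$ and $N$ together with the pairwise disjointness of the cylinder bases, and surjectivity is immediate from the tiling.
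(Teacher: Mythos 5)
Your overall strategy (a Markov-style check that the $\mathcal T$-images of slabs tile $\Omega$) is the same as the paper's, but the combinatorial facts you assert at the decisive points are wrong. First, no $N=RMR^{-1}$ occurring here fixes $0$: one has $N\cdot 0=R\cdot(M\cdot\infty)=-1/(M\cdot\infty)$, and every digit matrix $M=A^kC^l$ with $l\in\{1,2\}$ sends $\infty$ to a finite point (e.g.\ $N\cdot 0=-1/(1+kt)$ for $(k,1)$ and $-1/(kt)$ for $(k,2)$). So the image rectangles do \emph{not} all have upper edge $y=0$; if they did, any two of the (infinitely many) full-cylinder images would overlap in positive measure near $y=0$, contradicting the injectivity you are trying to prove. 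The actual point of the paper's argument is an interleaving: using $RCR^{-1}\cdot 0=-1$ and $AC^2\cdot r_{n-3}=1$, the image of $\Delta(k,1)\times[-1,0]$ sits exactly between the images of $\Delta(k,2)\times[-1/r_{n-3},0]$ and $\Delta(k+1,2)\times[-1/r_{n-3},0]$, sharing horizontal edges, so that the slabs over the full cylinders together fill precisely the top strip $(0,t]\times[-1/t,0)$ and nothing more.

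Second, your treatment of the non-full cylinder fails. $\Delta(1,2)=(1+1/t,t]$ is not contained in $[0,1]$ nor in any single level $[r_i,r_{i-1}]$, so the part of $\Omega$ above it is a staircase, not a rectangle $\Delta(1,2)\times F_{(1,2)}$; and its $T$-image is $(0,r_1]$, not $[0,1]$: a single application of $AC^2$ sends $t$ to $r_1$ and $1+1/t$ to $0$, while $(AC^2)^{n-2}\cdot t=1$ concerns the $(n-2)$nd iterate. Consequently your multiplicity claim (``points of $(1,t]$ are hit only by full-cylinder slabs, points of $[0,1]$ by every cylinder'') is false, and the bookkeeping you defer as the ``main obstacle'' cannot be carried out as posed: every full cylinder has image all of $[0,t)$, so the set of full cylinders contributing over a point $x\in(1,t]$ is the same for every such $x$, and the union of their image fibers cannot change from one staircase level to the next. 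The varying depths of $\Omega$ below $y=-1/t$ come instead from the image of the staircase over $\Delta(1,2)$ itself: $\mathcal T_{AC^2}$ sends $[r_i,r_{i-1}]\times[-1/r_{i-1},0]$ to $[r_{i+1},r_i]\times[-1/r_i,-1/t]$ (here the $y=-1/x$ invariance you cite is used, together with $R(AC^2)R^{-1}\cdot 0=-1/t$), and sends $(1+1/t,r_{n-3}]\times[-1/r_{n-3},0]$ to $(0,1]\times[-1,-1/t]$. Without these two corrections the tiling check does not close up, so as written the proof has a genuine gap.
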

\begin{proof} We have that $(0,1]$  is the union of the cylinders  $\Delta(i,1)$ with $i \in \mathbb N$.  Similarly, $(1, 1 + 1/t]$ is the union of the $\Delta(j,2)$ with $j \ge 2$.    
Since $1+1/t$ lies between $1= r_{n-2}$ and  $r_{n-3}$,   the $y$-fiber of $\Omega$ above each of these  $\Delta(j,2)$ is  $[-1/r_{n-3}, 0]$, whereas every $\Delta(i,1)$ has $y$-fibers given by $[-1,0]$.   Recall that $R$ is given in \eqref{e:justR}; since $RCR^{-1} \cdot 0 = -1$, it follows that $R A^k C R^{-1} \cdot -1 = R A^{k} C^2R^{-1} \cdot 0$ for any $k$.   Hence, each rectangle
$\Delta(k, 1)\times [-1,0]$ is mapped  above
 the image of $\Delta(k, 2)\times [-1/r_{n-3},0]$   so as to share exactly a common horizontal line.

 Now, $AC^2\cdot r_{n-3} = 1$ can be used to show that $RC^2 A C^2 R^{-1} \cdot -1/r_{n-3} = 0$ and a similar observation implies that 
each  rectangle $\Delta(k, 1 )\times  [-1,0]$ is mapped  below the image of 
 $\Delta(k+1, 2)\times [-1/r_{n-3},0]$   so as to share exactly a common horizontal line.

Therefore,  $\mathcal T$ sends $\Omega \cap \{x \le 1+ 1/t\}$ bijectively up to measure zero to  $(0,t]\times [-1/t, 0)$.   Furthermore,  since every $\mathcal T_M$ preserves the locus $y= -1/x$, 
we easily find that  $i = 1, \dots, n-4$,  
\[  \mathcal T_{AC^2}  (\,  [r_i, r_{i-1}]\times [-1/r_{i-1},0]  \,) =    [r_{i+1}, r_i]\times [-1/r_i,  -1/t] .\]
(Of course, when $n\le 4$ we must make appropriate adjustments.)   Furthermore,   $ \mathcal T_{AC^2}$ sends $(1+1/t, r_{n-3}]\times [-1/r_{n-3}, 0]$ to $(0,1]\times [-1, -1/t]$.     The result thus holds.  
\end{proof}

\bigskip 
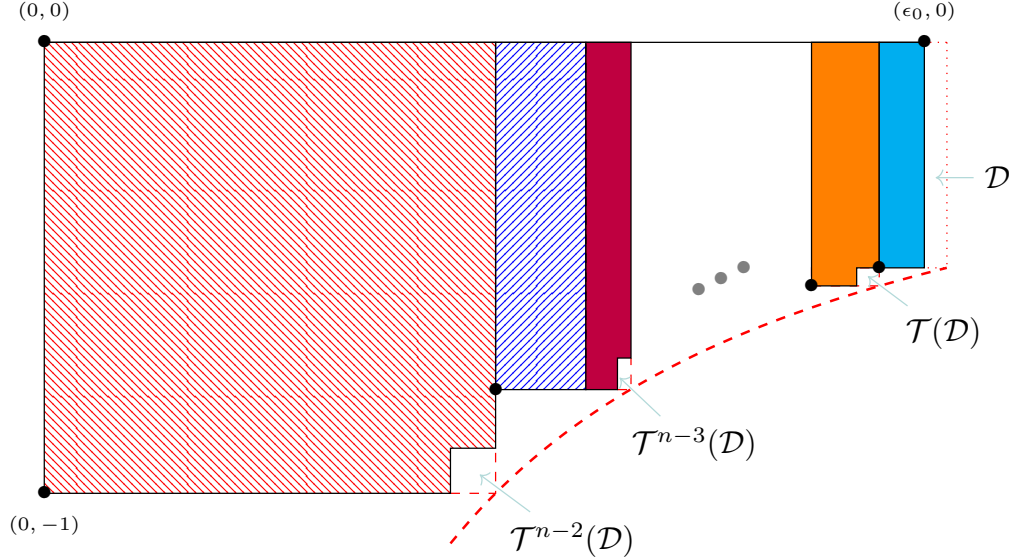
\begin{figure}[h]
\scalebox{1.2}{
\begin{tikzpicture}[x=5cm,y=5cm] 
\draw  (0.0, 0.0)--(1.95,0);
\draw [red, thin, dotted]  (2,0)--(2,-0.5)--(1.95,-0.5)--(1.95,0)--cycle; 
\draw [red, thin, dashed]   (1.85,-0.5)--(1.85,-0.54)--(1.7,-0.54)--(1.7,-0.5)--cycle;
\draw [red, thin, dashed]   (1.27,-0.77)--(1.3,-0.77)--(1.3,-0.7);
\draw [red, thin, dashed]    (0.9,-1)--(1,-1)--(1, -0.9);
\draw[pattern=north west lines, pattern color=red]  (0,0)--(0,-1)--(0.9,-1)--(0.9,-0.9)--(1,-0.9)--(1,0) -- cycle;
\draw[pattern=north east lines, pattern color=blue]  (1,0)--(1,-0.77)--(1.2,-0.77)--(1.2,0) -- cycle;
\draw[fill=purple]   (1.2,0)--(1.2,-0.77)--(1.27,-0.77)--(1.27,-0.7)--(1.3,-0.7)--(1.3,0)--cycle;
\draw[fill=orange]  (1.85,0)--(1.85,-0.5)--(1.8,-0.5)--(1.8,-0.54)--(1.7,-0.54)--(1.7,0)--cycle;
\draw[fill=cyan]  (1.95,0)--(1.95,-.5)--(1.85,-.5)--(1.85,0)--cycle; 
\draw [dashed]  (1.2,-0.7)--(1.2,0); 
 \foreach \x/\y in {1.45/-0.55,1.5/-0.525, 1.55/-0.5%
} { \node at (\x,\y) [gray]  {$\bullet$}; } 
 \foreach \x/\y in {0/0,0/-1,  1/-0.77,   1.7/-0.54,  1.85/-0.5,  1.95/0%
} { \node at (\x,\y) {$\bullet$}; } 
\node at (0,  0.07) {\tiny{$(0,0)$}};   
\node at (0,  -1.07) {\tiny $(0,-1)$}; 
\node at (1.95,  0.07) {\tiny $(\epsilon_0,0)$}; 
\node at (1.95,  -0.3)[pin={[pin edge=<-, pin distance=12pt]0:{$\mathcal D$}}] {};
\node at (1.8,  -0.5)[pin={[pin edge=<-, pin distance=12pt]315:{$\mathcal T(\mathcal D)$}}] {};
\node at (1.25, -0.72) [pin={[pin edge=<-, pin distance=12pt]274:{$\;\mathcal T^{n-3}(\mathcal D)$}}] {};
\node at (.94, -.94) [pin={[pin edge=<-, pin distance=12pt]295:{$\;\mathcal T^{n-2}(\mathcal D)$}}] {};
 \draw[domain= .9:2, smooth, variable=\x, red, thick, dashed] plot ({\x}, {-1/\x});
\end{tikzpicture}
}
\caption{Schematic representation of the domain $\Gamma$ for the accelerated two-dimensional map.  The domain is given by deleting from $\Omega$ the rectangle $\mathcal D = (\epsilon_0, t]\times [-t, 0]$ and its images under $\mathcal T, \dots, \mathcal T^{n-2}$.  See Lemma~\ref{l:alpOneTwoAccDom}.}%
\label{f:accOne}%
\end{figure}

\bigskip
Since  $U$ is a conjugate (up to sign) of $A^{-1}$ we see that it is a parabolic matrix and thus $t$ is  parabolic fixed point under $T$.  This in a sense is the cause of the invariant measure $\mu$ being infinite on $\Omega$.   Just as in the treatment of $T_{n,0}$ in \cite{CaltaSchmidt}, we ``accelerate"  our map by inducing past  the cylinder (here of rank $n-1$) related to the parabolic element.    There is an easily determined domain of bijectivity for the corresponding two-dimensional map.   (Indeed the following result is a specific case of a general phenomenon.)   Compare the following with Figure~\ref{f:accOne}.
 \begin{Lem}\label{l:alpOneTwoAccDom}     Fix $n\ge 3$.   Let $\epsilon_0 =  U^{-1}\cdot 0$  and let $g(x)$ be the first return map of $T_{n,1}$ to   $(0, \epsilon_0)$ and $\mathcal G$ the corresponding two-dimensional map.   Then $\mathcal G$  is bijective up to $\mu$-measure zero on    
 \[\Gamma =\Omega \setminus  \cup_{i=0}^{n-2}\; \mathcal T_{(AC^2)^i}(\mathcal D),\]
 where $\mathcal D =  (\epsilon_0, t]\times [-1/t,0]$. 
 \end{Lem}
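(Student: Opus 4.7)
My plan is to recognize $\mathcal G$ as the first return map of $\mathcal T$ from $\Omega$ to $\Gamma$ and to reduce its bijectivity to Proposition~\ref{p:alpOneTwoDdom} via a Kakutani--Rokhlin tower argument. The rectangles $\mathcal T_{(AC^2)^i}(\mathcal D)$ for $i = 0, 1, \ldots, n-2$ will form a finite tower whose complement in $\Omega$ is exactly $\Gamma$, with $\mathcal G$ acting by following $\mathcal T$ until a first return to $\Gamma$.

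First I will verify $\mathcal D \subset \Omega$: since $(\epsilon_0, t] \subset (r_1, r_0]$ and the fiber of $\Omega$ above this $x$-range is $[-1/t, 0]$ by Proposition~\ref{p:alpOneTwoDdom}, the rectangle $\mathcal D = (\epsilon_0, t] \times [-1/t, 0]$ sits inside $\Omega$. Then, by induction on $i$, I will show that $\mathcal T^i(\mathcal D) = \mathcal T_{(AC^2)^i}(\mathcal D)$ for $i = 0, 1, \ldots, n-2$: the $x$-projection of $\mathcal T^i(\mathcal D)$ is $((AC^2)^i \cdot \epsilon_0, r_i]$ (with $r_i = T^i(t)$), and this interval lies within a cylinder on which $T$ acts by $AC^2$, allowing us to extend one more $\mathcal T_{AC^2}$-step. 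The key inputs are the parallel orbits $T^i(t) = (AC^2)^i \cdot t = r_i$ and $T^i(\epsilon_0) = (AC^2)^i \cdot \epsilon_0$ for $1 \le i \le n-2$, the latter following from the factorization $U = AC \cdot (AC^2)^{n-2}$ together with $U \cdot \epsilon_0 = 0$. The $y$-ranges must simultaneously match the fibers of $\Omega$; I will track this using the conjugation rule $\mathcal T_M(x, y) = (M \cdot x, RMR^{-1} \cdot y)$ and the identity $RCR^{-1}\cdot 0 = -1$ from the proof of Proposition~\ref{p:alpOneTwoDdom}.

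Pairwise disjointness of the $n-1$ tower levels then follows from the disjointness of their $x$-projections, since $r_{i+1} < (AC^2)^i \cdot \epsilon_0 < r_i$ yields a strictly leftward-nested sequence of intervals. With the tower in place, the first return of $\mathcal T$ from $\Omega$ to $\Gamma$ is well-defined almost everywhere and bijective on $\Gamma$: on the portion of $\Gamma$ whose $\mathcal T$-image already lies in $\Gamma$, $\mathcal G$ agrees with $\mathcal T$; on the portion whose $\mathcal T$-image enters $\mathcal D$, $\mathcal G$ traverses the tower — matching exactly the definition of the induced map $g$ on $(0, \epsilon_0)$, whose variable return times arise from repeated applications of the parabolic $U$. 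Since $\mathcal T$ is bijective on $\Omega$ by Proposition~\ref{p:alpOneTwoDdom}, standard Kakutani induction gives bijectivity of $\mathcal G$ on $\Gamma$ up to $\mu$-measure zero.

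The main obstacle I anticipate is the $y$-range bookkeeping in the inductive step: confirming that the transformed $y$-range $R(AC^2)^i R^{-1}\cdot [-1/t, 0]$ matches the fiber of $\Omega$ over the $x$-projection $((AC^2)^i \cdot \epsilon_0, r_i]$ at each tower level. This is a finite explicit computation, but it requires careful tracking of both the upper boundary $0$ and lower boundary $-1/t$ through successive conjugation by $AC^2$, and matching against the staircase structure of $\Omega$ near its right edge recorded in Proposition~\ref{p:alpOneTwoDdom}.
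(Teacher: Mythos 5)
Your proposal is correct and follows essentially the same route as the paper: exhibit $\cup_{i=0}^{n-2}\,\mathcal T_{(AC^2)^i}(\mathcal D)$ as the $\mathcal T$-tower over $\mathcal D$ coming from the rank-$(n-1)$ cylinder $(\epsilon_0,t]$, recognize $\mathcal G$ as the induced (first-return) map of $\mathcal T$ on the complement $\Gamma$, and inherit bijectivity from that of $\mathcal T$ on $\Omega$ given by Proposition~\ref{p:alpOneTwoDdom}. The paper packages the Kakutani-type step as a short case analysis on whether $\mathcal T(x,y)$ lies in $\mathcal D$, and, like you, treats $g$ as the acceleration $U^j\circ T$ past the parabolic cylinder; your anticipated obstacle of matching the transformed $y$-ranges to the fibers of $\Omega$ at each level is not actually needed, since $\mathcal T^i(\mathcal D)\subset\Omega$ automatically from $\mathcal D\subset\Omega$ and the ($\mu$-a.e.) invariance of $\Omega$ under $\mathcal T$.
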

\begin{proof} By definition,   $g(x) = T^i(x)$ where $i = i(x)\in \mathbb N$ is minimal such that $T^i(x) < \epsilon_0$.    Since   $(\epsilon_0, t] = \Delta(\, (1,1)^{n-2}(1,2)\,)$, we have that  $g(x) = U^j \circ T(x)$ where $j\ge 0$ is minimal such that the image is outside of the rank $n-1$ cylinder $\Delta(\, (1,2)^{n-2}(1,1)\,)$.   

Now,    $(x,y)\in \mathcal D$ if and only if $T(x) \in \Delta(\, (1,2)^{n-2}(1,1)\,)$  and therefore the $\mathcal T$-orbit of $(x,y)$ includes the initial sequence $\{\mathcal T_{(AC^2)^i}(x,y)\}_{i=0}^{n-2}$.   Furthermore,  due to the bijectivity of $\mathcal T$,  any $(x,y) \in \cup_{i=0}^{n-2}\; \mathcal T_{(AC^2)^i}(\mathcal D)$  must belong to a length $n-1$ orbit sequence with an initial orbit element in $\mathcal D$.     

 Suppose now that $(x,y) \in \Gamma$ and $\mathcal T(x,y) \notin \mathcal D$.  From the previous paragraph, we have that   $g(x) = T(x) = M\cdot x$ for some matrix $M$ and hence both $\mathcal G(x,y) = \mathcal T(x,y) = \mathcal T_M(x,y)$ and $\mathcal G(x,y)$ must indeed belong to $\Gamma$.   
 
 On the other hand, if  $(x,y) \in \Gamma$ with $\mathcal T(x,y) \in \mathcal D$, then $T(x) \in (\epsilon_0, t)$ and there is a $j \in \mathbb N$ such that $\mathcal G(x,y) = \mathcal T_{U^j}\circ \mathcal T (x,y)  =  \mathcal T_{AC}\circ \mathcal T_{(AC^2)^{n-2}} \circ \mathcal T_{U^{j-1}}\circ \mathcal T (x,y)$.  But,  $\mathcal T_{U^{j-1}}\circ \mathcal T (x,y) \in \mathcal D$ and hence while $(\mathcal T_{AC})^{-1} \circ \mathcal G(x,y)
\in \cup_{i=0}^{n-2}\; \mathcal T_{(AC^2)^i}(\mathcal D)$,  the  application of $\mathcal T_{AC}$ must send this value outside of that union.   That is, here also $\mathcal G(x,y)$ must  belong to $\Gamma$.
 
 The bijectivity of $\mathcal G$ on $\Gamma$ now follows immediately from that of $\mathcal T$ on $\Omega$.  
\end{proof}  

 \begin{Cor}\label{c:alpOneAccErg}\label{c:accErg}     Fix $n\ge 3$ and let $g(x)$ be  induced from $T_{n,1}$ and let $\Gamma$ be  as above.   Then $g(x)$ is expansive and is ergodic with respect to the probability measure that is the normalized marginal measure from $\mu = (1+xy)^{-2}\, dx\, dy$ on $\Gamma$.   
 \end{Cor}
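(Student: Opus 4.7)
The plan is to invoke Theorem~\ref{t:evenExpanErgoNaturally} directly for the induced interval map $g$ on $(0, \epsilon_0)$, using the candidate planar extension $(\mathcal G, \Gamma)$ furnished by Lemma~\ref{l:alpOneTwoAccDom}. The theorem's conclusions will supply both the eventual expansivity (the corollary's ``expansive'' claim being read in this sense, which can be upgraded afterward by inspecting the bounded-below derivatives of each $g$-branch once the parabolic domain has been induced out) and the ergodicity of the normalized marginal measure. Thus the task reduces to checking the four hypotheses (a)--(d) of Theorem~\ref{t:evenExpanErgoNaturally}.

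For hypotheses (a) and (b): Proposition~\ref{p:alpOneTwoDdom} shows that every vertical fiber of $\Omega$ is a union of intervals contained in $[-1, 0]$, of length at most $1$ and at least $1/r_{n-3}>0$. Passing from $\Omega$ to $\Gamma$ removes only the fibers above the finitely many rectangles $\mathcal T_{(AC^2)^i}(\mathcal D)$, and by the bijectivity of $\mathcal T$ on $\Omega$ these excisions remove only a uniformly bounded amount from each fiber above $(0, \epsilon_0)$; hence the two-sided bound on fiber length persists on $\Gamma$. Hypothesis (b) is then automatic, since for $(x,y) \in \Gamma$ one has $x \leq \epsilon_0 < 1$ and $y \in [-1, 0]$, so $1 + xy \geq 1 - \epsilon_0 > 0$, giving uniform separation from the locus $y = -1/x$.

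For hypothesis (c), I would select a cylinder of $g$ whose branch is a single fixed M\"obius transformation --- for concreteness, a cylinder $\Delta(k,1)$ for some $k$ whose $T_{n,1}$-image lands in $(0, \epsilon_0)$ so that $g$ coincides with $T_{n,1}$ there --- verify its fullness by computing the branch image, and use the boundedness of fibers established above together with the explicit form \eqref{eqLebMap} to confirm that the image-to-receiving fiber length ratio lies in a compact subinterval of $(0,1)$. For hypothesis (d), bounded non-full range, the only candidate non-full cylinders of $g$ are those whose endpoints are $0$, $\epsilon_0$, or new boundary points introduced by the acceleration when cutting $T_{n,1}$-cylinders against $g^{-1}((\epsilon_0, t])$. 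Since $\epsilon_0 = U^{-1}\cdot 0$, the $g$-orbit of $\epsilon_0$ reaches $0$ in one (induced) step; and all cylinders accumulate at $0$, each full. The other boundary points introduced by acceleration all lie on the finite $T_{n,1}$-orbit of $t$ described in Proposition~\ref{p:alpOneTwoDdom}, hence also have finite $g$-orbit. Bounded non-full range then follows because $g$ has infinitely many full cylinders, of which all but finitely many are avoided by these finite endpoint orbits.

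With the four hypotheses verified, Theorem~\ref{t:evenExpanErgoNaturally} immediately delivers $0 < \mu(\Gamma) < \infty$, eventual expansivity and ergodicity of $g$ with respect to the normalized marginal of $\mu$, and in addition the fact that $(\mathcal G, \Gamma)$ is a natural extension (stronger than what the corollary asks). The step I expect to be most delicate is hypothesis (d): although $T_{n,1}$ has a clean finite-orbit structure for the endpoints of its non-full cylinder, acceleration introduces new boundary points in $(0, \epsilon_0)$, and one must carefully verify that each such new endpoint still has a finite $g$-orbit and that the remaining full cylinders suffice to isolate them. Once this bookkeeping is completed, the corollary follows by direct application of the main theorem.
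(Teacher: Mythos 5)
Your overall strategy is exactly the paper's: feed the induced map $g$ on $(0,\epsilon_0)$, together with the bijectivity domain $\Gamma$ from Lemma~\ref{l:alpOneTwoAccDom}, into Theorem~\ref{t:evenExpanErgoNaturally} and read off (eventual) expansivity and ergodicity of the normalized marginal measure. However, your verification of hypothesis (b) -- which is precisely the one non-trivial hypothesis, and the only one the paper checks in detail -- rests on a false numerical claim. You assert that $x \le \epsilon_0 < 1$ for $(x,y)\in\Gamma$, so that $1+xy \ge 1-\epsilon_0 > 0$. But $\epsilon_0 = U^{-1}\cdot 0$ is a point just below the parabolic fixed point $t = 1+2\cos(\pi/n) \ge 2$; indeed $\mathcal D = (\epsilon_0,t]\times[-1/t,0]$ sits inside the block over $[r_1,r_0]$, so $\epsilon_0 \ge r_1 \ge r_{n-2}=1$. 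Hence $1-\epsilon_0 \le 0$ and your inequality gives nothing: the domain of $g$ contains points $x$ well above $1$, the fibers there reach down toward $y=-1/x$, and $\Omega$ actually touches the hyperbola at the corner points $(r_i,-1/r_i)$, i.e.\ at the $\mathcal T$-orbit of $(t,-1/t)$.

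The correct argument (and the content of the paper's proof) is that these contact points are exactly the $\mathcal T$-orbit of $(t,-1/t)$, which starts in $\mathcal D$; consequently every point where $\Omega$ meets the locus $y=-1/x$ lies in $\bigcup_{i=0}^{n-2}\mathcal T_{(AC^2)^i}(\mathcal D)$, and since $\Gamma$ is obtained by deleting precisely these finitely many sets, $\Gamma$ stays a bounded distance from the hyperbola. Your treatment of hypothesis (a) is also looser than needed (removing ``a uniformly bounded amount'' from a fiber does not by itself give a positive lower bound; here one should note that each deleted strip $\mathcal T_{(AC^2)^i}(\mathcal D)$ occupies only the bottom portion $[-1/r_i,-1/t]$ of the fibers it meets, so every fiber of $\Gamma$ still contains an interval of length at least $1/t$), but that is repairable bookkeeping; the appeal to $\epsilon_0<1$ in hypothesis (b) is the genuine gap and must be replaced by the orbit-of-$(t,-1/t)$ argument above. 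Your sketches of (c) and (d) are in the spirit of the paper's ``easily verified'' and are fine modulo details.
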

\begin{proof} From the definition of $\Omega$, it follows that $\Omega$ meets the curve $y= -1/x$ exactly in the $\mathcal T$-orbit of $(t, -1/t)$, which is of course a point in $\mathcal D$. 
As well,  the remainder of $\Omega$ lies above this curve (with $x\ge 0$).     Since $\Gamma  =  \Omega \setminus  \cup_{i=0}^{n-2}\; \mathcal T_{(AC^2)^i}(\mathcal D)$, it follows that $\Gamma$ not only does not meet the curve, but  in fact stays a bounded distance away.  From this,  hypothesis (b) of Theorem~\ref{t:evenExpanErgoNaturally} is satisfied for the piecewise M\"obius map $g(x)$  on the interval $(0, \epsilon_0)$ with $\mathcal G$ bijective on $\Gamma$;  the other hypotheses are easily verified,  and hence the result holds. 
\end{proof}

\section{Quilting as a proof tool}\label{s:quilting}    We now discuss a technique  used to date for solving for the planar extension of a piecewise M\"obius interval map beginning with such a planar extension for a sufficiently ``nearby" map.   This technique, called {\em quilting}, was introduced in \cite{KraaikampSchmidtSmeets},  and has its roots  in the discussion of the two-dimensional interpretation of ``insertion" and ``deletion" in the Ph.D. dissertation \cite{Kraaikamp}.    Theorem~\ref{t:finQuiltIsFine}, shows that one can use quilting to prove that certain properties are 
shared between appropriately nearby systems.

\subsection{Quilting defined, main properties announced}\label{ss:quilt}   
 We give a basic definition.

  \begin{Def}  \label{d: finitelyQuilted}   Suppose that $f, g$ are piecewise M\"obius interval maps on $\mathbb I_f, \mathbb I_g$ each with uncountably many cylinders and with finite nonzero $\mu$-measure planar two-dimensional domains of bijectivity $\Omega_f, \Omega_g$ for corresponding two-dimensional maps $\mathcal F, \mathcal G$, respectively.  
  For $x \in \mathbb I_f$ let $b_f(x)$ denote the $f$-digit of $x$ (informally, this thus denotes the corresponding M\"obius transformation which applied to $x$ gives the value $f(x)\,$), and similarly for $b_g(x)$.    
 Let  
 \[
 \begin{aligned} 
     \Delta &= \Delta_{f,g} = \{ x \in  \mathbb I_f \cap \mathbb I_g \mid b_f(x) \neq b_g(x)\}\; \text{and} \\
      \mathcal C &=  \mathcal C_{f} = \{ (x,y) \in \Omega_f \mid x \in  \Delta\}.
 \end{aligned}
 \]      
 \end{Def}

  We then construct a domain on which we will show that $\mathcal G$ is bijective (up to sets of measure zero) by deleting the forward $\mathcal F$-orbit of $\mathcal C$ and adding in the forward $\mathcal G$-orbit of $\mathcal C$ (here we extend $\mathcal G$ to be the piecewise map on $\mathbb I_g \times\mathbb R$ given by the $\mathcal T_M$ where $g$ is given piecewise by $x \mapsto M\cdot x$, recall \eqref{e:2DlocMap}).   In general, each of these orbits is infinite and might even sweep out the respective domains up to measure zero.   For a practical version of this approach, we introduce some finiteness conditions.  

Recall that we are interested in measure theoretic results, and thus use disjointness of sets to mean that they meet in at most a null set.    See Figures~\ref{f:smallAlpQuilt} and  ~\ref{f:quiltLargeAlpLessThanDelta} for representations of  quilting in specific settings.
  
  \begin{Def}\label{d:generalQuilting}  
    We say that   $\Omega_g$ can be {\em countably quilted} from  $\Omega_f$ if $\mathcal C$ has positive measure and 
 \begin{enumerate} 
 \item[i.)]  There is an at most countable  partition of $\mathcal C$ by $\mathcal C_i$  and corresponding integers $d_i, a_i$ such that 
$\mathcal F^{1+d_i}_{\mid_{\mathcal C_i}}= \mathcal G^{1+a_i}_{\mid_{\mathcal C_i}}$;
\item[ii.)] The $\mathcal F^j(\, \mathcal C_i\,)$ indexed over all $i$ and  $1\le j \le d_i$  is pairwise disjoint and their union has strictly less than full measure in $\Omega_f$,   and similarly the $\mathcal G^j(\mathcal C_i)$  are pairwise disjoint, with their union of  finite measure; and, 
\item [iii.)]     \begin{equation}\label{eq:disjointDecomp}     \Omega_g = \bigg(\, \Omega_f \setminus \coprod_{i=1}^{\infty}\, \coprod_{j=1}^{d_i}\,\mathcal F^j(\,\mathcal C_i\,)\,\bigg) \amalg\;  \coprod_{i=1}^{\infty}\, \coprod_{j=1}^{a_i}\,\mathcal G^j(\mathcal C_i)\,.
\end{equation}
\end{enumerate}    
 \end{Def}
Of course,   when the partition is finite  of cardinality $n$, then $n$ replaces the infinite upper limits appearing in \eqref{eq:disjointDecomp}.   We then speak of  {\em finite quilting}.   For simplicity's sake, we will write {\em quilting} to denote countable quilting. 

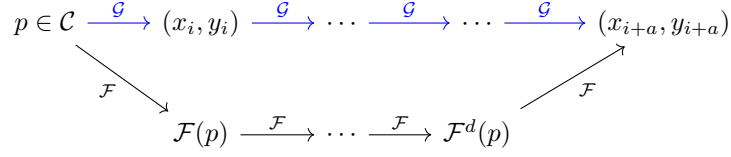
\begin{figure}
\begin{tikzcd}[column sep=2pc,row sep=2pc]
p\in \mathcal C \arrow[blue]{r}{\mathcal G}\arrow[swap, ""{name=left2}]{rd}{\mathcal F}&(x_i, y_i)\arrow[blue]{r}{\mathcal G}&\cdots \arrow[blue]{r}{\mathcal G}&\cdots \arrow[blue]{r}{\mathcal G}&(x_{i+a}, y_{i+a}) &{}&\\
{}&\mathcal F(p)\arrow{r}{\mathcal F}&\cdots \arrow{r}{\mathcal F}&\mathcal F^d(p)\arrow[swap, ""{name=right2}]{ur}{\mathcal F}&{}&{} &
\end{tikzcd}
\caption{When quilting, forward $\mathcal G$-orbit segments beginning at a point in $\mathcal C$ rejoin forward $\mathcal F$-orbit segments.}
\label{f:orbConfigOfQuiltingOneType}%
\end{figure}

 \begin{Thm}\label{t:finQuiltIsFine}  Assume that $\Omega_f$ is of finite $\mu$-measure.  Quilting 
 preserves the properties of:  (a) ergodicity of two dimensional maps;  (b) this planar extension giving the natural extension of the interval map's system;  and, when (b) holds allows for an explicit expression of the entropy of $g$ in terms of that of $f$.   
\end{Thm}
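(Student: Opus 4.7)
The heart of my plan is to exploit a common induced system shared by $\mathcal F$ and $\mathcal G$. Set
\[
\Omega^* := \Omega_f \setminus F_+ \;=\; \Omega_g \setminus G_+,
\]
where $F_+ := \coprod_{i,\,j=1}^{d_i} \mathcal F^j(\mathcal C_i)$ and $G_+ := \coprod_{i,\,j=1}^{a_i} \mathcal G^j(\mathcal C_i)$, equality by \eqref{eq:disjointDecomp}. Both $\mathcal F$ on $\Omega_f$ and $\mathcal G$ on $\Omega_g$ are then Kakutani skyscrapers over the common base $\Omega^*$, with height functions equal to $1$ on $\Omega^* \setminus \mathcal C$ and equal to $1+d_i$, respectively $1+a_i$, on $\mathcal C_i$. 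The central structural lemma I would prove first is that the induced maps coincide:
\[
\mathcal F_{\Omega^*} \;=\; \mathcal G_{\Omega^*}
\]
as $\mu$-preserving transformations of $\Omega^*$. In two cases: for $p \in \Omega^* \setminus \mathcal C$ the $f$- and $g$-digits at $\pi(p)$ agree, so $\mathcal F(p) = \mathcal G(p)$, and a one-step backward bijectivity argument combined with the disjointness clauses of (ii) places this common image in $\Omega^*$; for $p \in \mathcal C_i$, condition (i) gives $\mathcal F^{1+d_i}(p) = \mathcal G^{1+a_i}(p)$, condition (ii) places the intermediate iterates in $F_+$ (resp.~$G_+$), and a pull-back argument using the partition of $\mathcal C$ together with the disjointness in (ii) forces the common endpoint into $\Omega^*$, so the first returns under $\mathcal F$ and $\mathcal G$ agree and land at the same point.

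Granted the lemma, parts (a) and the entropy formula are immediate. An invertible, finite-measure-preserving transformation is ergodic iff its induced map on any positive-measure subset is ergodic; since the induced maps agree, $\mathcal F$ is ergodic iff $\mathcal G$ is. Two applications of Abramov's formula~\eqref{e:AbramForm} give
\[
h(\mathcal F)\,\mu(\Omega_f) \;=\; h(\mathcal F_{\Omega^*})\,\mu(\Omega^*) \;=\; h(\mathcal G_{\Omega^*})\,\mu(\Omega^*) \;=\; h(\mathcal G)\,\mu(\Omega_g),
\]
so, once (b) holds, $h(g) = h(f)\,\mu(\Omega_f)/\mu(\Omega_g)$.

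Part (b) is the main obstacle, since the natural extensions of $f$ and $g$ are generically non-isomorphic measure-preserving systems; in particular, no isomorphism between $\Omega_f$ and $\Omega_g$ can exist, and the conclusion must instead be extracted from the common induced structure together with Rohlin's characterization that $(\mathcal G, \Omega_g)$ is the natural extension of $(g, \mathbb I_g)$ iff $\bigvee_{n \geq 0} \mathcal G^n \pi^{-1}(\mathscr B_{\mathbb I_g}) = \mathscr B_{\Omega_g}$ modulo $\mu$. My route: first, identify a common induced interval map $\tilde h := f_{\pi(\Omega^*)} = g_{\pi(\Omega^*)}$ using that the quilting identities $f^{1+d_i}(x) = g^{1+a_i}(x)$ on $\pi(\mathcal C_i)$ make the two inductions coincide; second, argue that $\mathcal F_{\Omega^*} = \mathcal G_{\Omega^*}$ is the natural extension of $\tilde h$, by a standard ``inducing commutes with taking natural extensions'' result applied to $(\mathcal F, \Omega_f)$; third, deduce minimality of $(\mathcal G, \Omega_g)$ as an invertible extension of $(g, \mathbb I_g)$ by extending, tower-wise according to the height function, any factor map given on the shared base. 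The most delicate technical point will be the interplay between the 2D common set $\Omega^*$ and its 1D projection, verifying in particular that $\mathcal F_{\Omega^*}$ genuinely serves as the natural extension of $\tilde h$ despite $\Omega^*$ not necessarily being the full $\pi$-preimage of $\pi(\Omega^*)$.
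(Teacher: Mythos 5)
Your induced-system skeleton is sound for two of the three claims, and for those it either coincides with the paper's argument or is a legitimate variant of it. The entropy statement is proved in the paper exactly as you propose: the first-return maps of $\mathcal F$ and $\mathcal G$ to $\Omega^* = \Omega_f\setminus\coprod_i\coprod_{j=1}^{d_i}\mathcal F^j(\mathcal C_i)$ coincide, and Abramov's formula \eqref{e:AbramForm} applied twice gives $h(\mathcal G)\,\mu(\Omega_g)=h(\mathcal F)\,\mu(\Omega_f)$. For ergodicity the paper argues directly with forward orbits of $\mathcal C$ rather than through the induced map, but your route is viable with two caveats: the biconditional ``ergodic iff the induced map on a positive-measure subset is ergodic'' is false in general in the direction you need (an ergodic induced map forces ergodicity of the ambient map only when the forward images of the base sweep out the whole space mod null), and here the sweeping-out is exactly what \eqref{eq:disjointDecomp} supplies; and your central lemma $\mathcal F_{\Omega^*}=\mathcal G_{\Omega^*}$ does require the pull-back and disjointness checks you indicate, including that $\mathcal C$ itself is (mod null) disjoint from the two towers, a point the paper also uses implicitly.

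The genuine gap is in part (b). The set $\Omega^*$ is not a union of full vertical fibers: the deleted pieces $\mathcal F^j(\mathcal C_i)$ occupy only portions of the fibers over their projections, so $\pi(\Omega^*)$ is (up to null sets) all of $\mathbb I_f$ and your $\tilde h := f_{\pi(\Omega^*)}$ is just $f$ itself. The $x$-projection of $\mathcal F_{\Omega^*}$ is not that map but the jump transformation applying $f^{1+d_i}$ on $\Delta_i$ and $f$ elsewhere; consequently the standard ``inducing commutes with natural extensions'' principle, which concerns inducing on sets of the form $\pi^{-1}(E)$ with $E$ one-dimensional, does not apply to $\Omega^*$, and your step 2 needs a separate argument (it can be repaired, by showing that a bi-infinite itinerary under the jump map determines the full bi-infinite $f$-orbit). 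More seriously, step 3 is where the real content lies and your sketch gives no mechanism for it: a point of $\Omega_g$ lying over $\Delta$ need not be in the base $\mathcal C$ --- it can sit in the middle of an added tower level $\mathcal G^j(\mathcal C_i)$, which may itself project over $\Delta$ --- so a bi-infinite $g$-orbit does not visibly determine the tower level (the ``phase'') of a lift, and ``extending a factor map tower-wise'' presupposes precisely the phase identification that must be proved. Ruling out this ambiguity is the heart of the paper's proof of Proposition~\ref{p:QuiltNaturally} (its Step 2, which tracks where orbit segments beginning in $\mathcal C$ can occur and uses typicality/minimality of the matching data only implicitly through the quilting definition), done directly with bi-infinite orbits and without passing through the induced system. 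As written, your proposal flags the fiber issue but resolves neither it nor the phase ambiguity, so the natural-extension claim is not established.
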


We give precise statements in the propositions of the subsequent subsection, which together prove the theorem.

\begin{Rmk}\label{rmk:iso}  Theorem~\ref{t:finQuiltIsFine} certainly holds when the systems of $\mathcal F$ and of $\mathcal G$ are isomorphic.   This is the case in particular  when   $a_i = d_i$ holds for each $\mathcal C_i$ of the quilting partition of $\mathcal C$.  Indeed,  we can then give an explicit isomorphism, $\varphi:\Omega_f \to \Omega_g$ by fixing $\Omega_f \setminus \coprod_{i=1}^{\infty}\, \coprod_{j=1}^{d_i}\,\mathcal F^j(\,\mathcal C_i)$ and  applying  $\mathcal F^j(x,y) \mapsto \mathcal G^j(x,y)$ for $1\le j \le a_i$ for each $(x,y) \in \mathcal C_i$,  for each $\mathcal C_i$.
\end{Rmk}

\subsection{Proofs of main properties}\label{ss:proofProperties}   

 \begin{Prop}\label{p:quiltErgo}   Suppose that $f, g$ are piecewise M\"obius interval maps such that $\Omega_g$ can be quilted from  $\Omega_f$,  and that $\mu(\Omega_f)< \infty$.  If $\mathcal F: \Omega_f \to \Omega_f$ is ergodic with respect to the measure $\mu$, then both $\mathcal G$ and $g$ are ergodic, with respect to 
$\mu$ on $\Omega_g$ and its marginal measure on $\mathbb I_g$, respectively.
\end{Prop}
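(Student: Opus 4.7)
The plan is to view both $(\mathcal F,\Omega_f,\mu)$ and $(\mathcal G,\Omega_g,\mu)$ as Kakutani towers over a common base $\Omega_0$, show that the first-return maps to this base coincide, and then transfer ergodicity via the standard tower-induction correspondence; ergodicity of $g$ then follows because $g$ is a factor of $\mathcal G$ via projection to the first coordinate.

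I begin by setting
\[
 \Omega_0 := \Omega_f \setminus \coprod_{i=1}^{\infty}\coprod_{j=1}^{d_i}\mathcal F^j(\mathcal C_i),
\]
which, by the quilting decomposition \eqref{eq:disjointDecomp}, also equals $\Omega_g \setminus \coprod_{i,\,j}\mathcal G^j(\mathcal C_i)$. Hypothesis (ii) of Definition~\ref{d:generalQuilting} forces $\mathcal C \subseteq \Omega_0$, so $\mu(\Omega_0)>0$. Let $R_f,R_g$ denote the first-return maps of $\mathcal F,\mathcal G$ to $\Omega_0$. The key claim is $R_f=R_g$ almost everywhere. For $p\in\Omega_0\setminus\mathcal C$ the $f$- and $g$-digits agree on the $x$-coordinate, so $\mathcal F(p)=\mathcal G(p)$; moreover this image lies in $\Omega_0$ (otherwise $p$ would lie in some $\mathcal F^{j-1}(\mathcal C_i)$, contradicting either $p\in\Omega_0$ or $p\notin\mathcal C$), giving $R_f(p)=R_g(p)$. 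For $p\in\mathcal C_i$, hypothesis (ii) places $\mathcal F(p),\dots,\mathcal F^{d_i}(p)$ outside $\Omega_0$, and $\mathcal F^{1+d_i}(p)$ back in $\Omega_0$; the same holds for $\mathcal G$ with exponents $1,\dots,a_i$ and $1+a_i$. Hypothesis (i) yields $\mathcal F^{1+d_i}(p)=\mathcal G^{1+a_i}(p)$, so $R_f(p)=R_g(p)$.

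With $R_f=R_g=:R$ established, I invoke the standard Kakutani induction principle: a finite invariant-measure system is ergodic if and only if the first-return map to any positive-measure subset is ergodic. Finiteness of $\mu(\Omega_g)$ follows from decomposition (iii) together with $\mu(\Omega_f)<\infty$ and hypothesis (ii). Hence $\mathcal F$ ergodic implies $R$ ergodic on $\Omega_0$, which in turn implies $\mathcal G$ ergodic on $\Omega_g$. Ergodicity then descends to the factor $g$ on $\mathbb I_g$ equipped with the marginal measure, as required.

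The main obstacle is the precise identification $R_f=R_g$: one must verify both that the deleted regions are traversed \emph{exactly} by the $\mathcal F$- and $\mathcal G$-orbit segments of $\mathcal C$, so that the return times from $\mathcal C_i$ are really $1+d_i$ and $1+a_i$ with no intervening excursions, and that points returning from those excursions land back in the common base $\Omega_0$ rather than in some other excursion region. Both checks rest on the disjointness and exhaustion properties packaged into hypothesis (ii) of the quilting definition; the remaining tower-induction and factor-map arguments are entirely routine.
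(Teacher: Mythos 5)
Your overall route -- inducing both $\mathcal F$ and $\mathcal G$ on the common region $\Omega_0=\Omega_f\setminus\coprod_{i,j}\mathcal F^j(\mathcal C_i)$ and transferring ergodicity through the common first-return map -- is viable, and in fact this is the device the paper itself uses for the entropy statement (Proposition~\ref{p:quiltEntropy} asserts exactly the equality of the two induced maps on this set). The paper's proof of the ergodicity proposition, however, is a different and more direct argument: given a $\mathcal G$-invariant set $E$ of non-full measure, it uses that $\mathcal G=\mathcal F$ off $\mathcal C$ and then splices in the interpolating forward $\mathcal F$-orbit segments over each excursion from $\mathcal C$ to manufacture an $\mathcal F$-invariant set, which ergodicity of $\mathcal F$ forces to be null; no return-time bookkeeping is needed there.

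As written, your proposal has two genuine gaps. First, the ``Kakutani induction principle'' you invoke is false as stated: ergodicity of the first-return map on a positive-measure subset does \emph{not} imply ergodicity of the ambient finite-measure system (take a disjoint union of two invariant pieces and induce on one of them). The implication you need, $R$ ergodic $\Rightarrow\mathcal G$ ergodic, requires in addition that almost every point of $\Omega_g$ enter $\Omega_0$ under forward iteration of $\mathcal G$; this sweeping property does hold here, but only as a consequence of the very return-time facts discussed next, and it must be stated and checked. Second, the crux of your argument -- that for a.e.\ $p\in\mathcal C_i$ the point $\mathcal F^{1+d_i}(p)=\mathcal G^{1+a_i}(p)$ lands back in $\Omega_0$, and that $\mathcal C\subseteq\Omega_0$ -- is attributed to hypothesis~(ii) of Definition~\ref{d:generalQuilting}, which does not deliver it: (ii) only concerns the sets $\mathcal F^j(\mathcal C_i)$, $1\le j\le d_i$, and $\mathcal G^j(\mathcal C_i)$, $1\le j\le a_i$, and says nothing about the level $j=0$ nor about where $\mathcal F^{1+d_i}(\mathcal C_i)$ sits; a priori the orbit could fall into another excursion set (a ``chained excursion''), in which case $R_f(p)\ne\mathcal F^{1+d_i}(p)$. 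Ruling this out is a real argument: one uses that $\mathcal F^{1+d_i}(p)$ lies in $\Omega_g$ (since $\Omega_0\subset\Omega_g$ and $\mathcal G$ preserves $\Omega_g$), so by the disjointness in \eqref{eq:disjointDecomp} it would have to lie in some $\mathcal G^{j'}(\mathcal C_{i''})$, and then the a.e.\ injectivity of $\mathcal G$ on $\Omega_g$ pulls this back to a positive-measure violation of the disjointness hypotheses. You flag exactly this point as ``the main obstacle'' but leave it unverified; since the whole identification $R_f=R_g$, and with it both directions of your transfer of ergodicity, rests on it, the proof is incomplete until that step (or the paper's splicing argument, which avoids it) is supplied.
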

\begin{proof} Since the dynamical system of $g$ is a factor of that of $\mathcal G$,  its ergodicity will follow from that of this latter system.     
Now suppose $E \subset \Omega_g$ is $\mu$-measurable,  is not of full measure,  and is invariant under $\mathcal G$. We aim to show that $E$ is a null set.     Recall that both $\mathcal F, \mathcal G$ preserve the measure. 

By  definition of $\mathcal C$,  we have that $\mathcal G$ agrees with $\mathcal F$ on  $\Omega_g \cap (\Omega_f \setminus \mathcal C)$.    The ergodicity of $\mathcal F$ shows that we may assume that each $\mathcal G$-orbit contained in $E$ which meets  $\Omega_g \cap (\Omega_f \setminus \mathcal C)$  also exits this set.    That is, we may assume that $E$ is contained in the set of forward $\mathcal G$-orbits of points of $\mathcal C$.    Each $p \in \mathcal C$  has an initial $\mathcal G$-orbit segment which  meets its forward $\mathcal F$-orbit.  By choosing the interpolating forward $\mathcal F$-orbit segments until their meeting and then through the next entrance to $\mathcal C$,  we form  an $\mathcal F$-invariant  set.    The ergodicity of $\mathcal F$  shows that this  is a nullset and therefore so must be $E$.      
\end{proof}

 \begin{Prop}\label{p:QuiltNaturally}   Suppose that $f, g$ are piecewise M\"obius interval maps such that $\Omega_g$ can be  quilted from  $\Omega_f$  and that the dynamical system of $\mathcal F$ is the natural extension of that of $f$. Then  the dynamical system of $\mathcal G$ is the natural extension of that of $g$. 
\end{Prop}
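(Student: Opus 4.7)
The plan is to verify that $(\mathcal{G},\Omega_g,\mu)$ is an invertible extension of $(g,\mathbb{I}_g)$ and then to establish minimality by reducing to the corresponding property of $(\mathcal{F},\Omega_f)$. Invertibility and the factor/extension property come essentially for free: $\Omega_g$ is a domain of bijectivity for $\mathcal{G}$ by the very definition of quilting, and the projection $\pi_g(x,y)=x$ intertwines $\mathcal{G}$ with $g$ while pushing $\mu$ to a $g$-invariant marginal measure.

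The crux is to exhibit a common induced subsystem. Set
\[
E_0 \;:=\; \Omega_f \,\setminus\, \coprod_{i=1}^{\infty}\coprod_{j=1}^{d_i}\mathcal{F}^j(\mathcal{C}_i),
\]
so that by \eqref{eq:disjointDecomp} we also have $E_0 \subseteq \Omega_g$ with $\Omega_g \setminus E_0 = \coprod_{i=1}^{\infty}\coprod_{j=1}^{a_i}\mathcal{G}^j(\mathcal{C}_i)$. Since $b_f(x)$ and $b_g(x)$ depend only on $x$, the quilting partition $\{\mathcal{C}_i\}$ of $\mathcal{C}$ may be taken $\pi$-saturated, and then so is $E_0$: writing $J_0:=\pi(E_0)$, the set $E_0$ is the full preimage of $J_0$ under projection in both $\Omega_f$ and $\Omega_g$. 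I would then check that the first-return maps coincide, $\mathcal{F}_{E_0}=\mathcal{G}_{E_0}$. On $E_0\setminus\mathcal{C}$ the two planar maps agree point-wise. On each $\mathcal{C}_i$, the $\mathcal{F}$- and $\mathcal{G}$-orbits sojourn $d_i$ and $a_i$ steps respectively outside $E_0$, then re-enter at the common point $\mathcal{F}^{1+d_i}(p)=\mathcal{G}^{1+a_i}(p)$ supplied by condition (i) of Definition~\ref{d:generalQuilting}. Projecting, the same argument yields $f_{J_0}=g_{J_0}$.

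With this coincidence in hand, I would invoke the principle that natural extensions commute with inducing on $\pi$-saturated subsets of positive measure. Since $(\mathcal{F},\Omega_f)$ is the natural extension of $(f,\mathbb{I}_f)$ by hypothesis, the induced system $(\mathcal{F}_{E_0},E_0)$ is the natural extension of $(f_{J_0},J_0)$. By the coincidence above, $(\mathcal{G}_{E_0},E_0)$ is therefore the natural extension of $(g_{J_0},J_0)$. A Kakutani-tower argument then lifts this back to the full systems: $\Omega_g$ is built from $E_0$ by stacking the fibers $\{\mathcal{G}^j(\mathcal{C}_i)\}_{j=1}^{a_i}$, and $\mathbb{I}_g$ is built from $J_0$ by the analogous tower of $g$-returns, with heights matched level-for-level under $\pi_g$ since $\pi_g\circ \mathcal{G}=g\circ \pi_g$. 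Combined with the invertibility of $\mathcal{G}$, this is designed to yield the natural-extension property for $(\mathcal{G},\Omega_g)$ over $(g,\mathbb{I}_g)$.

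The main obstacle is the last lift from the induced system to the full system. Concretely, one must verify that $\bigvee_{n\ge 0}\mathcal{G}^n\pi_g^{-1}\mathscr{B}_{\mathbb{I}_g}$ generates $\mathscr{B}_{\Omega_g}$ modulo $\mu$-null sets, given the analog on $E_0$. The mechanism is that each tower slice $\mathcal{G}^j(\mathcal{C}_i)$ is essentially a graph over its $x$-projection, so the forward $g$-trajectory of $x$ identifies both the base $\mathcal{C}_i$ and the level $j$ within the tower; combined with the sigma-algebra control already available at the return-time level, this recovers the $y$-coordinate. The hypothesis $\mu(\Omega_f)<\infty$, together with the finite-measure condition (ii) for $\Omega_g$, ensures $\mu$-integrability of the return time to $E_0$ and so provides the measure-theoretic control needed to handle a countable partition of $\mathcal{C}$ with possibly unbounded tower heights.
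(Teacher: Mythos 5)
There is a genuine gap, and it sits exactly at the step you call the crux. The set $\mathcal{C}$ is indeed $\pi$-saturated (it is cut out by a condition on $x$ alone), but its forward images are not: for $j\ge 1$ the set $\mathcal{F}^j(\mathcal{C}_i)$ is the image of a union of fibers under a map that sends the fiber over $x$ into a \emph{proper} piece of the fiber over $f^j(x)$ (the interval map is many-to-one, so many branches feed the same receiving fiber). Consequently $E_0=\Omega_f\setminus\coprod_{i}\coprod_{j\le d_i}\mathcal{F}^j(\mathcal{C}_i)$ is \emph{not} the full preimage of $J_0=\pi(E_0)$: over a typical $x\in J_0$ part of the fiber lies in $E_0$ and part in a deleted block (see Figure~\ref{f:smallAlpQuilt}, where $\mathcal{T}_{\alpha}^{\,2}(\mathcal{C})$ and $\mathcal{T}_{\alpha}^{\,3}(\mathcal{C})$ are thin strips inside much taller fibers). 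This breaks the whole chain that follows. First, the return time of $(x,y)\in E_0$ to $E_0$ depends on $y$, so $\mathcal{F}_{E_0}$ does not project to a first-return interval map on $J_0$, and the asserted identity $f_{J_0}=g_{J_0}$ is false in general: $J_0$ typically still meets $\Delta$, and at a point of $\Delta\cap J_0$ whose $f$-image lies in $J_0$ one has $f_{J_0}\neq g_{J_0}$. Second, the principle that natural extensions commute with inducing is valid only for inducing sets of the form $\pi^{-1}(A)$ with $A$ in the base; it cannot be applied to $E_0$. Third, the Kakutani-tower lift fails for the same reason: the planar tower heights over $E_0$ are $y$-dependent, so they are not "matched level-for-level'' with the interval tower over $J_0$. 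The part of your argument that is sound --- $\mathcal{F}_{E_0}=\mathcal{G}_{E_0}$ as planar first-return maps --- is exactly what the paper uses in Proposition~\ref{p:quiltEntropy} for the entropy computation, but it does not by itself yield the natural-extension statement.

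The paper's proof of Proposition~\ref{p:QuiltNaturally} avoids projecting the induced set altogether and works directly with the characterization of the natural extension by unique lifting of bi-infinite orbits: every bi-infinite $\mathcal{G}$-orbit in $\Omega_g$ decomposes into segments lying in $\Omega_g\cap(\Omega_f\setminus\mathcal{C})$, where $\mathcal{G}=\mathcal{F}$, and segments starting in $\mathcal{C}$ and ending at the common point $\mathcal{F}^{1+d_i}(p)=\mathcal{G}^{1+a_i}(p)$; replacing the latter by the interpolating $\mathcal{F}$-segments transcribes each $\mathcal{G}$-orbit into a bi-infinite $\mathcal{F}$-orbit, a uniqueness argument shows all lifts of a given bi-infinite $g$-orbit produce the same $\mathcal{F}$-orbit, and then the hypothesis on $(\mathcal{F},\Omega_f)$ forces the lift of each bi-infinite $g$-orbit to $\Omega_g$ to be unique. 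If you want to salvage your inducing strategy, you would have to induce on a genuinely saturated set (a full preimage $\pi^{-1}(A)$ avoided by all the quilted blocks), and it is not clear such a set with the required coincidence properties exists in general; the orbit-transcription route is the one that actually closes.
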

\begin{proof} 
 The natural extension is the minimal invertible system of which our given system is a factor.   Let us call any bi-infinite sequence $(x_i)_{i \in \mathbb Z}$ with each $x_i \in \mathbb I_g$ satisfying that for all $i$, $g(x_i) = x_{i+1}$  a {\em bi-infinite $g$-orbit} and similarly for our other maps.    We then say that a bi-infinite $\mathcal G$-orbit $(x_i, y_i)_{i \in \mathbb Z}$ projects to the bi-infinite $g$-orbit $(x_i)_{i \in \mathbb Z}$\,.
 
 To show that the system of $\mathcal G$ is the natural extension of the system of $g$, it suffices to show that for each bi-infinite $g$-orbit there is a unique   bi-infinite $\mathcal G$-orbit $(x_i, y_i)_{i \in \mathbb Z}$ in $\Omega_g$ projecting to it.      By hypothesis, the analogous statement is true for the pair $f, \mathcal F$.

  {\bf Step 1:  From $\mathcal G$- to  $f$-orbits.} 
We first associate to each  bi-infinite  $\mathcal G$-orbit $(x_i, y_i)_{i \in \mathbb Z}$ in $\Omega_g$ a bi-infinite $f$-orbit.     Let us discern two types of $\mathcal G$-orbit  orbit segments, those of type $a$, which begin at a point $p \in \mathcal C$ and end at the common point $\mathcal F^{d+1}(p) = \mathcal G^{a+1}(p)$; and those of type $c$, common to both by lying in $\Omega_g\cap (\Omega_f\setminus \mathcal C)$.   An orbit segment of type $c$ but whose extension thereafter exits $\Omega_g\cap (\Omega_f\setminus \mathcal C)$ must be such that the extension meets $\mathcal C$.  That is, this orbit segment is then followed by an orbit segment of type $a$.    Similarly,  an orbit segment of each of type $a$ is followed by a segment of one of two types.     In each of these settings, there is a uniquely associated $\mathcal F$-orbit segment;  this segment is given by equality when the $\mathcal G$-orbit segment is of type $c$,  and otherwise by the $\mathcal F$-orbit segment demanded as part of the definition of the type $a$ $\mathcal G$-orbit segment.    Further note that an orbit segment of either of the two types begins at the ending of a segment of one of the two types.   That is, we can uniquely extend the transcription from $\mathcal G$-orbit segments to $\mathcal F$-orbit segments infinitely in both directions.  With this, we have uniquely associated to each   bi-infinite $\mathcal G$-orbit $(x_i, y_i)_{i \in \mathbb Z}$ in $\Omega_g$ a  bi-infinite $\mathcal F$-orbit in $\Omega_f$.   Finally, by projecting, we find a bi-infinite $f$-orbit.   
 
 {\bf Step 2:  Uniqueness.}    Fix now a bi-infinite $g$-orbit $\gamma=(x_i)_{i \in \mathbb Z}$.  Let us call  any choice of  $\mathcal G$-orbit in $\Omega_g$ projecting to $\gamma$ a `lift' of $\gamma$.  
 From the above, to a bi-infinite $g$-orbit $\gamma=(x_i)_{i \in \mathbb Z}$  we associate  one bi-infinite $f$-orbit  per lift of $\gamma$.    We now aim to show that these bi-infinite $f$-orbits are one and the same.
 
We first note that $\gamma$ determines a  unique bi-infinite sequence $(M_i)_{i\in \mathbb Z}$ such that $x_{i+1} = M_i\cdot x_i$;  and hence any choice of bi-infinite $\mathcal G$-orbit in $\Omega_g$ projecting to $\gamma$ satisfies  $(x_{i+1}, y_{i+1}) = \mathcal T_{M_i}(x_i,y_i)$ for all $i$.   The analogous statement holds for bi-infinite $f$- and $\mathcal F$-orbits.

Now, any  lift of $\gamma$ can be partitioned into segments of type $a$ and $c$.   Consider the set $\mathcal J = \{j \, \vert\, x_j \in \Delta\}$.   If $\mathcal J = \emptyset$  then any $\mathcal G$-orbit in $\Omega_g$ projecting to $\gamma$ lies in $\Omega_g\cap (\Omega_f\setminus \mathcal C)$.   This is then also an  $\mathcal F$-orbit.  The projections agree, and thus $\gamma$ itself is our desired bi-infinite $f$-orbit.  Now if  $\mathcal J$ is non-empty  then any lift of $\gamma$  is such that there is some $j$ with $p = (x_j, y_j) \in \mathcal C$ and the previous paragraph  then shows that this lift's associated bi-infinite $f$-orbit contains the forward $f$-orbit of $x_j$.     If another lift of $\gamma$ is such that it contains a point $q = (x_j, y'_j)$ with $q \notin \mathcal C$ then $q$ occurs in what we can call the `middle' of some $\mathcal G$-orbit segment of type $a$.    But then this segment is announced by a 
$p' = (x_{j'}, y_{j'})\in \mathcal C$ with $j'<j$,  and again by invoking the previous paragraph, we have that the forward $f$-orbit of 
$x_{j'}$ is contained in this lift's associated bi-infinite $f$-orbit.  Furthermore, this forward $f$-orbit contains that of $x_j$.   From this, if $\mathcal J$ has a least element $j$, then every lift of $\gamma$ has its associated bi-infinite $f$-orbit containing the forward $f$-orbit of the corresponding $x_j$.   But,  each such lift must then have backwards infinite orbit completely of type $c$.  That is, the associated bi-infinite $f$-orbits must also all agree for indices less than $j$.  By similar reasoning, in the case of $\mathcal J$ having no least element, all of the lifts of $\gamma$ share the same bi-infinite $f$-orbit.    
 
 {\bf Step 3: Conclusion.}    By hypothesis,   each bi-infinite $f$-orbit  has exactly one bi-infinite $\mathcal F$-orbit in $\Omega_f$ projecting to it.   Since the bi-infinite $\mathcal F$-orbit in $\Omega_f$ associated to a bi-infinite $\mathcal G$-orbit in $\Omega_f$ certainly uniquely identifies this $\mathcal G$-orbit,  we conclude that 
$\gamma$ has exactly one bi-infinite $\mathcal G$-orbit in $\Omega_f$ projecting to it.
 \end{proof}

 \begin{Prop}\label{p:quiltEntropy}   Suppose that $f, g$ are piecewise M\"obius interval maps such that $\Omega_g$ can be  quilted from  $\Omega_f$.  Then the entropy of $\mathcal F$ and $\mathcal G$ are related by 
\[ h(\mathcal G) =  h(\mathcal F) \; \dfrac{ \mu( \Omega_f)}{\mu(\Omega_{g})}\,.\]
If furthermore the dynamical system of $\mathcal F$ is the natural extension of that of $f$,  then  
 the entropy of $g$ is given by  
 \begin{equation} \label{e:genQuiltEntropy}
 h(g) = \bigg(1 + \sum_{i=1}^{\infty}\, (a_i - d_i) \, \nu(\,\Delta_i\,)\bigg)^{-1} h(f),
 \end{equation}
 where for each $i$, $\mathcal C_i$ projects to $\Delta_i \subset \mathbb I_f$ and $\nu$ is the marginal probability measure induced from $\mu$ on $\Omega_f$.
\end{Prop}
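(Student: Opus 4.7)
The strategy is to identify a common ``bulk'' region $E$ on which the induced transformations of $\mathcal{F}$ and $\mathcal{G}$ coincide as dynamical systems, apply Abramov's formula \eqref{e:AbramForm} twice, and then compute $\mu(\Omega_g)$ in terms of $\mu(\Omega_f)$ using $\mu$-invariance of $\mathcal{F}$ and $\mathcal{G}$.

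First I would set
\[
E \;=\; \Omega_f \setminus \coprod_{i}\coprod_{j=1}^{d_i}\mathcal{F}^j(\mathcal{C}_i) \;=\; \Omega_g \setminus \coprod_{i}\coprod_{j=1}^{a_i}\mathcal{G}^j(\mathcal{C}_i),
\]
where the second equality is immediate from \eqref{eq:disjointDecomp}. The key claim is that the first-return map of $\mathcal{F}$ to $E$ (with respect to $\mu$) equals the first-return map of $\mathcal{G}$ to $E$, almost everywhere. For a point $p \in E \setminus \mathcal{C}$ one has $b_f(\pi(p)) = b_g(\pi(p))$, hence $\mathcal{F}(p) = \mathcal{G}(p)$; using $\mu$-a.e.\ bijectivity of $\mathcal{F}$ on $\Omega_f$ together with the disjointness in (ii), this image lies in $E$, so both first-return times equal $1$ and the return values coincide. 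For a point $p \in \mathcal{C}_i \subset E$, the intermediate points $\mathcal{F}(p), \dots, \mathcal{F}^{d_i}(p)$ lie outside $E$ by definition and, by (ii), no earlier return to $E$ is possible; thus the first return under $\mathcal{F}$ is $\mathcal{F}^{d_i+1}(p)$, and analogously the first return under $\mathcal{G}$ is $\mathcal{G}^{a_i+1}(p)$. By (i) these two returns agree as points of $E$. Hence the two induced systems on $E$ are literally the same transformation, and carry identical (restricted) measures $\mu|_E$; in particular their normalized versions are isomorphic as measure-preserving systems and share an entropy value $h_E$.

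Next I would apply Abramov's formula with the probability measures $\mu/\mu(\Omega_f)$ and $\mu/\mu(\Omega_g)$: since each of these, restricted and renormalized to $E$, equals $\mu|_E/\mu(E)$, we get
\[
h_E \;=\; \frac{h(\mathcal{F})\,\mu(\Omega_f)}{\mu(E)} \;=\; \frac{h(\mathcal{G})\,\mu(\Omega_g)}{\mu(E)},
\]
from which $h(\mathcal{G}) = h(\mathcal{F})\,\mu(\Omega_f)/\mu(\Omega_g)$ follows at once.

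Finally, to derive \eqref{e:genQuiltEntropy}, I would use the fact that both $\mathcal{F}$ and $\mathcal{G}$ preserve $\mu$ to conclude $\mu(\mathcal{F}^j(\mathcal{C}_i)) = \mu(\mathcal{G}^j(\mathcal{C}_i)) = \mu(\mathcal{C}_i)$ for each relevant $j$. Summing the decomposition \eqref{eq:disjointDecomp} then yields
\[
\mu(\Omega_g) \;=\; \mu(\Omega_f) + \sum_i (a_i - d_i)\,\mu(\mathcal{C}_i).
\]
Since $\mathcal{C}_i$ is the fibered region over $\Delta_i$ in $\Omega_f$, one has $\mu(\mathcal{C}_i) = \nu(\Delta_i)\,\mu(\Omega_f)$ where $\nu$ is the marginal probability measure. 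Dividing by $\mu(\Omega_f)$ and substituting into the already-proved ratio gives
\[
\frac{h(\mathcal{G})}{h(\mathcal{F})} \;=\; \Bigl(\,1 + \sum_i (a_i - d_i)\,\nu(\Delta_i)\,\Bigr)^{-1}.
\]
Under the additional natural-extension hypothesis, Proposition~\ref{p:QuiltNaturally} gives $h(\mathcal{G}) = h(g)$ and by hypothesis $h(\mathcal{F}) = h(f)$, yielding \eqref{e:genQuiltEntropy}. The main potential obstacle is the verification that the first-return times are exactly $d_i+1$ and $a_i+1$ with no earlier return to $E$; this is where one must take care that the pairwise disjointness of the $\mathcal{F}^j(\mathcal{C}_i)$ (and $\mathcal{G}^j(\mathcal{C}_i)$), together with the bijectivity of $\mathcal{F}$ on $\Omega_f$ and of $\mathcal{G}$ on $\Omega_g$, precludes any accidental re-entry into $E$ in the middle of a quilting orbit segment.
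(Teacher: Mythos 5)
Your proposal is correct and follows essentially the same route as the paper: you induce on the common region $E=\Omega_{f,g}:=\Omega_f\setminus\coprod_i\coprod_{j=1}^{d_i}\mathcal F^j(\mathcal C_i)$, note the two first-return maps coincide, apply Abramov's formula to both systems, and then use $\mu$-invariance plus the quilting decomposition together with $\nu(\Delta_i)=\mu(\mathcal C_i)/\mu(\Omega_f)$ and Proposition~\ref{p:QuiltNaturally} to pass to $h(f)$ and $h(g)$. The only difference is that you spell out the verification that the induced maps on $E$ agree, which the paper simply asserts.
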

\begin{proof}  We can compute the entropy of $\mathcal G$ in terms of that of $\mathcal F$ by using Abramov's formula  (as mentioned in  \ref{ss:BasicsDynSys})  Let  $\Omega_{f,g} := \Omega_f \setminus \coprod_{i=1}^{\infty}\, \coprod_{j=1}^{d_i}\,\mathcal F^j(\,\mathcal C_i\,)$.   The first return maps induced from each of $\mathcal F$ and $\mathcal G$ to  $\Omega_{f,g}$ are equal.   Therefore,
\[ \dfrac{h(\mathcal G) \, \mu( \Omega_g)}{\mu(\Omega_{f,g})} =   \dfrac{h(\mathcal F) \, \mu( \Omega_f)}{\mu(\Omega_{f,g})}.\]
Since our various maps are $\mu$-measure preserving,   
\[ h(\mathcal G) = \bigg(1 + \sum_{i=1}^{\infty}\, (a_i - d_i) \, \dfrac{\mu(\mathcal C_i)}{\mu(\Omega_{\alpha})}\bigg)^{-1} h(\mathcal F).\]

If $\mathcal F$ gives the natural extension of  $f$, then they have the same entropy.  Furthermore,  from the previous proposition,   $\mathcal G$ then gives the natural extension of $g$ and thus these also have equal entropy.  Since each $\Delta_i\,$ is the projection of  $\mathcal C_i$, we have that $\nu(\,\Delta_i\,)= \mu(\mathcal C_i)/\mu(\Omega_{\alpha})$.   Therefore, \eqref{e:genQuiltEntropy} holds.   
\end{proof} 

\subsection{Property of realizable first return type is also preserved}\label{ss:quiltRealFirstReturn}    We use notation,  terminology and results presented in \S~\ref{ss:ArnouxMethod}.

 \begin{Prop}\label{p:quiltFirstReturn}   Suppose that $f, g$ are expansive piecewise M\"obius interval maps such that $\Omega_g$ can be quilted from  $\Omega_f$.      Suppose that $\Gamma_f = \Gamma_g$ is of finite covolume, $\widehat{\Gamma}_f = \widehat{\Gamma}_g$, and   $f$ is of  realizable first return type.   Then $g$ is also of  realizable first return type.  Furthermore,  both maps:  are ergodic;  have their planar extensions as natural extensions;  and, are 
factors of the first return map to a section for the geodesic flow on $T^1( \Gamma_f \backslash \mathbb H)$. 
\end{Prop}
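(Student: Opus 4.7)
The plan is to combine the quilting transfer results already established (Propositions~\ref{p:quiltErgo}, \ref{p:QuiltNaturally}, and \ref{p:quiltEntropy}) with the ``Moreover'' direction of the Arnoux-method theorems (Theorem~\ref{t:detOneSetting} and Theorem~\ref{t:detPlusMinOneSetting}). Since $f$ is of realizable first return type, the relevant Arnoux theorem already tells us that $f$ is ergodic, $\mathcal{F}$ is its natural extension, and the entropy identity $h(f)\,\mu(\Omega_f)=\mathrm{vol}(T^1(\Gamma_f\backslash\mathbb{H}))$ (resp.\ twice this, in the mixed determinant case) holds; so the claims for $f$ are immediate, and the real content is to transfer them to $g$.

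First, I will apply Proposition~\ref{p:quiltErgo} to deduce ergodicity of $\mathcal{G}$ and hence of $g$, and Proposition~\ref{p:QuiltNaturally} to conclude that $\mathcal{G}$ is the natural extension of $g$; in particular $h(g)=h(\mathcal{G})$. Next, a short calculation using Proposition~\ref{p:quiltEntropy} chains through
\[
h(g)\,\mu(\Omega_g)\;=\;h(\mathcal{G})\,\mu(\Omega_g)\;=\;h(\mathcal{F})\,\mu(\Omega_f)\;=\;h(f)\,\mu(\Omega_f)\;=\;\mathrm{vol}(T^1(\Gamma\backslash\mathbb{H})),
\]
(with a factor of $2$ inserted on each side in the mixed determinant case). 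Thus $g$ is expansive (by hypothesis), ergodic, and satisfies the critical entropy equation.

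It remains to verify the cross-section hypotheses of Theorem~\ref{t:detOneSetting} or Theorem~\ref{t:detPlusMinOneSetting} for $g$. In the determinant-one setting this is automatic: $\Gamma_g=\Gamma_f$ is already Fuchsian of finite covolume, and positivity and finiteness of $\mu(\Omega_g)$ are part of the quilting setup. In the mixed determinant setting, the purely group-theoretic conditions on $D$ (items (2) and (3) of Theorem~\ref{t:detPlusMinOneSetting}) depend only on $\Gamma$ and $\widehat{\Gamma}$, hence transfer verbatim from $f$ to $g$ by the hypothesis $\Gamma_g=\Gamma_f$, $\widehat{\Gamma}_g=\widehat{\Gamma}_f$. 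The one genuinely geometric condition to check is near-disjointness of $\Sigma^g_+$ and $\Sigma^g_-$, and this is the main obstacle I expect. The plan there is to exploit that $\Omega_g$ and $\Omega_f$ differ only on the quilting symmetric difference $\coprod_{i,j}\mathcal{F}^j(\mathcal{C}_i)\cup\coprod_{i,j}\mathcal{G}^j(\mathcal{C}_i)$, and that on each such block the two Arnoux transversals' images in $T^1(\Gamma\backslash\mathbb{H})$ are related by finite-length geodesic flow segments of the form \eqref{e:arnouxFlow}; the near-disjointness for $f$ then forces the near-disjointness for $g$, modulo the flow-invariance and the injectivity (up to null sets) of $\mathscr{P}_\Gamma$ on $\mathcal{Z}(\Omega_g)$ coming from \cite{ArnouxSchmidtCommCF}.

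Once the cross-section hypotheses are verified, the ``Moreover'' clause of the relevant Arnoux theorem applies in reverse: the combination of ergodicity, eventual expansivity, and the entropy identity forces the map $\phi$ (or $\psi$) associated to $g$ to agree with the first return of the geodesic flow to $\Sigma_g$. This is precisely the statement that $g$ is of realizable first return type, and it simultaneously realizes the system of $g$ as a factor of the first return of the geodesic flow on $T^1(\Gamma_f\backslash\mathbb{H})$ to the cross section $\Sigma_g$, completing the proposition for both maps.
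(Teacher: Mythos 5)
Your proposal follows essentially the same route as the paper's proof: extract ergodicity, the natural-extension property and the entropy identity $w_f\,h(f)\,\mu(\Omega_f)=\operatorname{vol}(T^1(\Gamma_f\backslash\mathbb H))$ for $f$ from Theorem~\ref{t:detOneSetting} or~\ref{t:detPlusMinOneSetting}, transfer them to $g$ via Propositions~\ref{p:quiltErgo}, \ref{p:QuiltNaturally} and \ref{p:quiltEntropy}, and then run the ``Moreover'' biconditional in reverse. The only difference is that you pause to re-verify the cross-section hypotheses (in particular the near-disjointness of $\Sigma_+$ and $\Sigma_-$ in the mixed-determinant case) for $\Omega_g$, a point the paper's proof treats as implicit in concluding realizable first return type; your sketch there is additional caution rather than a divergence from, or a defect relative to, the published argument.
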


\begin{proof}    Since $f$ is of realizable first return type, from Theorem~\ref{t:detOneSetting}  or Theorem~\ref{t:detPlusMinOneSetting},  we have that $f$ is ergodic,  expansive,  that its planar extension gives a natural extension, and that $w_f h(f) \mu(\Omega_f) =  \text{vol}(\,   T^1( \Gamma_f\backslash \mathbb H)\, )$, where $w_f \in \{1,2\}$ is equal to $1$ if only if $\widehat{\Gamma}_f =  \Gamma_f$.

     Now,  Proposition~\ref{p:quiltEntropy}   gives $h(\mathcal G)  \mu(\Omega_{g}) =  h(\mathcal F)  \mu( \Omega_f)$.  Proposition~\ref{p:QuiltNaturally} and the fact that entropy is shared by a map and its natural extension, then  gives that $h(g)  \mu(\Omega_{g}) =  h(f)  \mu( \Omega_f)$. Hence,   $h(g)  \mu(\Omega_{g})$ equals $w_f$ times the volume of the unit tangent bundle of  $\Gamma_g \backslash \mathbb H$.     By hypothesis, $g$ is expansive, Proposition~\ref{p:quiltErgo} shows $g$ is ergodic, and hence $g$ is of  realizable first return type.
\end{proof} 

   We now also use terminology and results of \S~\ref{ssBern}. Recall that `quasi-isomorphic' means having isomorphic natural extensions.    
 \begin{Prop}\label{p:quiltBernoulli}   Assume the hypotheses of the previous proposition.  
If  $\vert 1/f'\vert$ is of bounded variation then  the natural extension of $f$ is a Bernoulli system, similarly for $g$; if both have this property, then their systems are quasi-isomorphic if and only of they share the same entropy value.    
\end{Prop}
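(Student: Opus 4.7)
The plan is to deduce the Bernoulli property for each of the natural extensions of $f$ and $g$ by verifying the hypotheses of Rychlik's Theorem~\ref{t:rychlikBernoulli}, and then to deduce the quasi-isomorphism criterion from Ornstein's Theorem~\ref{t:Ornstein}. The work splits cleanly along these two results.

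First I would address $f$ (the argument for $g$ is identical). Rychlik requires four items: piecewise monotonicity, a unique invariant probability measure equivalent to Lebesgue, the ``covering'' property that every nonempty open set has some forward iterate equal to the whole interval, and bounded variation of $|1/f'|$. Piecewise monotonicity is immediate from the piecewise M\"obius hypothesis; bounded variation is assumed. For the unique a.c.i.p., I would invoke Proposition~\ref{p:quiltErgo} (via the previous proposition) together with Remark~\ref{rmk:adlerFolkAndEquiv}: $f$ is ergodic with respect to the marginal measure $\nu$ on its interval of definition, which is equivalent to Lebesgue because $\mu$ is equivalent to Lebesgue on $\Omega_f$, and any two mutually absolutely continuous ergodic probability measures coincide. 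With Rychlik's four hypotheses in place, the natural extension of $f$ is Bernoulli; the same argument verbatim applies to $g$.

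The step I expect to be the main obstacle is the covering property. I would handle it as follows: since $f$ is eventually expansive (hypothesis) and piecewise M\"obius, iteration expands a small open interval uniformly until it straddles a discontinuity of some iterate; thereafter one can cover the image of at least one full-cylinder branch, and standard bounded-distortion arguments for M\"obius maps show the forward images sweep out a set of positive Lebesgue measure whose iterates, by ergodicity with respect to a Lebesgue-equivalent measure, are dense in $\mathbb I_f$. Combined with eventual expansivity, this upgrades to the full interval (up to the measure-zero convention in force throughout the paper). The delicate point is ensuring ``onto the interval'' rather than merely ``of full measure,'' but under our convention this is precisely what is needed.

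Finally, I would conclude the quasi-isomorphism statement. By definition, $f$ and $g$ are quasi-isomorphic precisely when their natural extensions are isomorphic. Having shown both natural extensions are Bernoulli of finite entropy (finiteness of entropy follows from $0<\mu(\Omega_f),\mu(\Omega_g)<\infty$ via Proposition~\ref{p:quiltEntropy} and Abramov's formula \eqref{e:AbramForm}), Ornstein's Theorem~\ref{t:Ornstein} gives that isomorphism is equivalent to equality of entropies. Since a map shares its entropy with its natural extension (Rohlin), equality of the entropies of the natural extensions is equivalent to $h(f)=h(g)$, completing the proof.
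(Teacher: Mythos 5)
Your proof follows the same route as the paper: the bounded variation assumption completes the hypotheses of Rychlik's Theorem~\ref{t:rychlikBernoulli}, giving Bernoulli natural extensions for $f$ and $g$, and Ornstein's Theorem~\ref{t:Ornstein} then makes entropy a complete invariant, yielding the quasi-isomorphism criterion. The paper's proof is exactly this two-step argument, stated even more tersely (it leaves the verification of Rychlik's remaining hypotheses implicit, relying on the ergodicity and expansivity furnished by the previous proposition), so your extra checks of the unique a.c.i.p.\ and the covering property merely elaborate on what the paper takes for granted.
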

\begin{proof}   The fulfillment of the bounded variation condition for either map completes the hypotheses for the Rychlik result, Theorem~\ref{t:rychlikBernoulli}, and thus guarantees that the natural extension system is Bernoulli.   If this occurs for both maps, Ornstein's fundamental result  Theorem~\ref{t:Ornstein} shows that isomorphism of the natural extension systems is determined by entropy values. 
\end{proof}

\begin{Rmk}\label{rmk:crossBern}  In the above, if $\vert 1/f'\vert$ is of bounded variation and $f$  is of determinant one type (that is, if $\hat{\Gamma}_f = \Gamma_f$) then the cross section to the geodesic flow on $T^1( \Gamma_f\backslash \mathbb H)$ associated to $f$ in Theorem~\ref{t:detOneSetting}, being a version of the natural extension,  is Bernoulli.  It is not always true that a cross section to a Bernoulli flow has Bernoulli first return system, as discussed in \cite{OrnsteinWeiss2} and \cite{OrnsteinRudolphWeiss}.    
\end{Rmk}

\begin{Rmk}\label{rmk:crossBernHoldsForCKSsys}   For each of the maps $T = T_{n,\alpha}$  discussed in  \S~\ref{ss:CKS} and for each $x$ in its domain, one has $T'(x)$ being equal to either $(C\cdot x)'$ or $(C^2\cdot x)'$ where $C$ is given in \eqref{e:generators}.    It easily follows that $\vert 1/T'\vert$ is of bounded variation.   Results of \cite{CaltaKraaikampSchmidt} and the determination in \cite{CaltaKraaikampSchmidtPlanar}  of planar extension systems then allow one to invoke the results of this subsection for the  $T_{n,\alpha}$.
\end{Rmk}

\subsection{Finite quilting for close neighbors that match}\label{ss:FiniteQuiltingClose}  We quickly give basic definitions which capture the essence of the {\em matching} interval phenomenon  --- also known as:  synchronization \cite{CaltaKraaikampSchmidt}, or short cycles \cite{KatokUgarcoviciStructure} ---   studied in various of families of continued fraction type maps.   Thereafter we show that under reasonable assumptions upon matching intervals there are subintervals on which  quilting applies.     Our terminology and notation attempts to negotiate between that of \cite{CT} and of \cite{CaltaKraaikampSchmidt}.    

\subsubsection{Matching:  relations,  intervals and their exponents}\label{sss:matching}
Suppose that we are given a one parameter family of piecewise M\"obius interval functions, $\{T_{\alpha}\}_{\alpha \in \mathcal I}\,$, indexed by $\alpha$ ranging over some real interval $\mathcal I$, each of    whose interval of definition   $\mathbb I_{\alpha} = [\, \ell_0(\alpha), r_0(\alpha)\,)$ is of fixed length  $\lambda$.  (We will always assume that also  $\vert\, \mathcal I \,\vert \le \lambda$.) In particular, letting  
\begin{equation}\label{e:nameShift} S: x \mapsto x + \lambda,
\end{equation}
for all $\alpha$ we have  $\ell_0(\alpha) = S^{-1}\cdot r_0(\alpha)$.   

A {\em matching interval} $J \subset \mathcal I$ is a subinterval such (1) that there are some $m,n \in \mathbb N$ such that for all $\alpha \in J$ we have   $T_{\alpha}^m(\, \ell_0(\alpha)\,) = T_{\alpha}^n(\, r_0(\alpha)\,)$,  where  $m,n$ are minimal except possibly at finitely many $\alpha \in J$ --- we call the $\alpha \in J$ where this minimality holds  {\em typical} --- ; (2) the digits of the expansions of the endpoints $\ell_0(\alpha)$ agree in that for all $1\le i < m$ there is a M\"obius transformation $M_i$ such that $T_{\alpha}^i(\, \ell_0(\alpha)\,) = M_i\cdot \ell_0(\alpha)$  for all $\alpha \in J$, and similarly for the $r_0(\alpha)$; 
and,  (3) there are M\"obius transformations $L_J, R_J$ such that for all $\alpha \in J$ we have $T_{\alpha}^{n-1}(\, r_0(\alpha)\,)= R_J\cdot r_0(\alpha)$ and    $T_{\alpha}^{m-1}(\, \ell_0(\alpha)\,) = L_JS^{-1}\cdot r_0(\alpha)$.   We call $m, n$ the {\em matching exponents} of $J$.  In fact, we need a further property of the family: say that the family has a {\em matching relation} if (4) there is some M\"obius transformation $M$ such that  for any matching interval,   $M L_J S^{-1}= R_J$.

\begin{Rmk}\label{rmk:moreMatching}   Note that the notion of matching is easily extended to orbits to the left and right of points of discontinuities, see Bruin {\em et al} \cite{BruinEtAl} where this is done for a related setting.  We forgo doing this here, for simplicity's sake.
\end{Rmk}

Many of the well-studied families of continued fractions have matching relations. 
\begin{Eg}\label{eg:goodFamiliesHaveMatching}   We briefly indicate a few of these.

\begin{itemize} 
\item     The Nakada $\alpha$-continued fractions has a matching relation:  let 
\begin{equation}\label{e:wmat} W = \begin{pmatrix}1&0\\-1&-1  \end{pmatrix},
\end{equation}
 then combining (\cite{KraaikampSchmidtSteiner} Remark~6.9) with (\cite{KraaikampSchmidtSteiner} Lemmas ~6.2, 6.4) shows that $M=W$ gives the  relation  for each matching interval $J$. (Note that   \cite{CT}  also showed that there are matching intervals in the Nakada family, but express the matching relations in a different manner than here.)\\

\item   In the setting of $\alpha$-continued fraction expansions with odd partial quotients,  \cite{HKLM}   show a matching relation of the form $M L_J S^{-1}= R_J$, with $M = \begin{pmatrix}-1&0\\2&1  \end{pmatrix}$ see the final line on their p.~28.\\

\item The countably many families of  \S~\ref{ss:CKS} coming from \cite{CaltaKraaikampSchmidt},  are such that for each $n$ the corresponding family has a matching relation for  the small $\alpha$, those with $\alpha \in (0,\gamma_{n})$, see (\cite{CaltaKraaikampSchmidt}, Prop.~5.2).    
There  is a distinct matching relation on $(\gamma_{n}, 1]$    (more precisely for those parameter values,   one splits each matching interval $J$ into two pieces and finds that there is a matching relation for all of the left hand pieces and a nearly identical relation for all of the right hand pieces); see (\cite{CaltaKraaikampSchmidt}, Lemma~6.2).   
\end{itemize}  

\end{Eg}
 
 \subsubsection{Close neighbors}

 We are  interested in applying quilting when $\alpha, \alpha'$ lie within the same synchronization interval;  quilting succeeds in the most straightforward manner if we require that $\alpha, \alpha'$ are particularly close.  
 
To lighten notation, let us write $\ell_i$ and $\ell'_i$ for each of $T_{\alpha}^i(\ell_0(\alpha)\,)$ and $T_{\alpha'}^i(\ell_0(\alpha')\,)$, respectively and similarly for the orbits of the other endpoints.   We use the notation of Definition~\ref{d: finitelyQuilted}  in the following.
  \begin{Def} 
  \label{d: tightSyn} Suppose that $J$ is a matching interval with corresponding matching exponents $m,n$.  We say that $\alpha, \alpha' \in J$  are {\em close neighbors} if
  $\ell'_i, \ell_i, r'_j, r_j \in  \mathbb I_{\alpha'} \cap \mathbb I_{\alpha}$ for all  $1\le i   \le  m$ and $1 \le j   \le n$.  
 \end{Def}
\noindent   Note that the hypothesis on the orbit entries can equivalently be written as:   The $\alpha$-digit of $\ell'_i$ equals the $\alpha'$-digit of $\ell_i$ and vice versa for each $1\le i <m$, and similarly for the various $r_j, r'_j$.\\

  The majority of the aforementioned families have the following properties.  
  \begin{Def}  \label{d:shiftDigs}  Fix a family of piecewise M\"obius interval maps, $\mathcal F = \{ T_{\alpha} \,\vert\, \alpha \in \mathcal I\}$.

 \begin{enumerate} 
  \item   We say that $\mathcal F$  is {\em of purely shift digit changes} if for any $\alpha, \alpha' \in\mathcal I$ whenever $x \in \Delta_{T_{\alpha}, T_{\alpha'}}$ then  $T_{\alpha}(x) = S^{\pm 1} T_{\alpha'}(x)$, where $S$ is as in \eqref{e:nameShift}.\\

  \item Suppose that $\alpha \in J$ with $J$ a matching interval of matching exponents $m,n$.  We say that $\Omega_{\alpha}$ has {\em locally constant fibers} if its vertical fibers are constant between  the points of the initial orbits of $\ell_0(\alpha)$ and $r_0(\alpha)$:   that is, if the fibers are constant above the connected components  of  the complement in $\mathbb I_{\alpha}$ of   $\{ \ell_i(\alpha)\,\vert\, 0\le i \le m-1\} \cup \{ r_j(\alpha)\,\vert \, 0\le j \le n-1\}$.
 \end{enumerate}
\end{Def}

\begin{figure}[h]
\scalebox{.75}{
\begin{tikzpicture}[x=5cm,y=5cm] 
\draw  (-1.72, -0.4)--(-0.55, -0.4); 
\draw  (-0.55, -0.4)--(-0.55, -0.2); 
\draw  (-0.55, -0.2)--(0.28, -0.2); 
\draw  (0.28,  1.79)--(-0.25,  1.79); 
\draw  (-0.25,  1.79)--(-0.25,  1.64); 
\draw  (-0.25,  1.64)--(-0.44,  1.64); 
\draw  (-0.44,  1.64)--(-0.44,  0.74); 
\draw  (-0.44,  0.74)--(-0.74,  0.74);
\draw  (-0.74,  0.74)--(-0.74,  0.44);
\draw  (-0.74,  0.44)--(-1.64,  0.44);
\draw  (-1.64,  0.44)--(-1.64,  0.39);   
\draw  (-1.64,  0.39)--(-1.72,  0.39);  
\draw  (-1.72,  0.39)--(-1.72,  -0.4); 
\draw  (0.0, -0.2)--(0.0, 1.79);         
\draw[thin,dashed] ( -0.84, -0.4)--(-0.84, 0.44);        
\draw[thin,dashed] (-0.32, 1.64)--(-0.32, -0.2); 
\draw[thin,dashed] (-0.19, 1.79)--(-0.19, -0.2); 
\node at (-1, 0) {$-1$};  
\node at (-0.55, 0) {$-2$};      
\node at (-0.26, 0) {$-3$};      
\node at (-0.1, 0) {$\cdots$};  
\node at (0.13, 0) {$\cdots$}; 
 \node at (-1.7, -0.5) {$(\ell_0, y_{-2})$}; 
 \node at (-1.95,  0.40) {$(\ell'_0, y_1)$}; 
 \node at ( -1.5, 0.55) {$(\ell_3, y_2)$}; 
 \node at (-0.82, 0.82) {$(\ell'_2, y_3)$};
 \node at (-0.6, 1.65)  {$(\ell_1, y_4)$}; 
 \node at (-0.3, 1.9)  {$(\ell'_4, y_5)$}; 
 \node at (0.5, -0.2) {$(r_0, y_{-1})$}; 
 \node at (0.35, 1.9) {$(r'_0, y_5)$}; 
 \node at (-0.5, -0.5)  {$(r_1, y_{-2})$}; 
 \node at (0, -0.2)  {$0$}; 
 \foreach \x/\y in {-1.72/-0.4,-0.55/-0.4, 
 0.28/-0.2,  0.24 /1.79, -0.28/1.79, -0.44/1.64, -0.77/0.74,
-1.64/0.44, -1.76/0.39%
} { \node at (\x,\y) {$\bullet$}; } 
\draw[pattern=north east lines, pattern color=red]    (0.24, -0.2)--(0.24, 1.79) -- (0.27, 1.79) --(0.27, -0.2) -- cycle; 
\draw[pattern=north east lines, pattern color=red]     (-0.55, -0.4)--(-0.55, -0.2) --  (-0.59, -0.2)--(-0.59, -0.4) -- cycle; 
\draw[pattern=north west lines, pattern color=blue]    (-1.72, -0.4)--(-1.76, -0.4) -- (-1.76, 0.39) --(-1.72, 0.39) -- cycle; 
\draw[pattern=north west lines, pattern color=blue]   (-1.64,  0.44)--(-1.64,  0.39) -- (-1.67,  0.39)--(-1.67,  0.44) -- cycle; 
\draw[pattern=north west lines, pattern color=blue]    (-0.74,  0.74)--(-0.74,  0.44) -- (-0.77,  0.44)--(-0.77,  0.74) -- cycle; 
\draw[pattern=north west lines, pattern color=blue]     (-0.44,  1.64)--(-0.44,  0.74) --  (-0.47,  0.74)--(-0.47,  1.64) -- cycle;  
\draw[pattern=north west lines, pattern color=blue]    (-0.25,  1.79)--(-0.25,  1.64) --  (-0.28,  1.64)--(-0.28,  1.79) -- cycle;  
\draw[pattern=north west lines, pattern color=blue]    (-1.0, 0.35 )--(-1.0, 0.3 ) --  (-1.1, 0.3 )--(-1.1, 0.35 ) -- cycle; 
\draw[pattern=north east lines, pattern color=red]    (-1.0, 0.35 )--(-1.0, 0.3 ) --  (-1.1, 0.3 )--(-1.1, 0.35 ) -- cycle; 
\node at (0.27,  0.8)[pin={[pin edge=<-, pin distance=12pt]0:{$\mathcal D := \mathcal T_{\alpha}(\mathcal C)$}}] {};
\node at (-0.55,  -0.3)[pin={[pin edge=<-, pin distance=12pt]0:{$\mathcal T_{\alpha}^{\,2}(\mathcal C)$}}] {};
\node at (-1.76,  0)[pin={[pin edge=<-, pin distance=12pt]180:{$ \mathcal T_{A^{-1}}(\mathcal D) = \mathcal T_{\alpha'}(\mathcal C)$}}] {};
\node at (-1.67,  0.42)[pin={[pin edge=<-, pin distance=60pt]135:{$\mathcal T_{\alpha'}^{\,2}(\mathcal C)$}}] {};
\node at (-0.77,  0.58)[pin={[pin edge=<-, pin distance=80pt]160:{$\mathcal T^{\,3}_{\alpha'}(\mathcal C)$}}] {};
\node at (-0.47,  1.2)[pin={[pin edge=<-, pin distance=60pt]180:{$\mathcal T^{\,4}_{\alpha'}(\mathcal C)$}}] {};
\node at (-0.3,  1.74)[pin={[pin edge=<-, pin distance=80pt]180:{$\mathcal T^{\,5}_{\alpha'}(\mathcal C)$}}] {};
\node at (-1.1,  0.3)[pin={[pin edge=<-, pin distance=5pt]250:{$\mathcal T^{\,6}_{\alpha'}(\mathcal C) = \mathcal T^{\,3}_{\alpha}(\mathcal C)$}}] {};
\end{tikzpicture} 
}
\caption{{\bf Quilting from a close neighbor.}  Quilting in the setting of `small' $\alpha$ of systems discussed in \S~\ref{ss:CKS}.  Here $n=3, \alpha = 0.14, \alpha' = 0.135$, and quilting given $\Omega_{\alpha}$ results in $\Omega_{\alpha'}$.   Domain $\Omega_{\alpha}$,  details of which are in \cite{CaltaKraaikampSchmidtPlanar}, not drawn fully to scale.   Integers $ -1, -2, -3$ indicate regions fibering over cylinders of corresponding `simplified digits'.   The forward $\mathcal T_{\alpha}$-orbit of $\mathcal C$ is deleted, while the forward  $\mathcal T_{\alpha'}$-orbit of $\mathcal C$ is added, until the ``hole" created by excising  $\mathcal T_{\alpha}^{\,3}(\mathcal C)$ is ``patched" in by  $\mathcal T_{\alpha'}^{\,6}(\mathcal C)$.   See Proposition~\ref{p:closeNbhsQuiltTogether}. }%
\label{f:smallAlpQuilt}%
\end{figure}
 
 See Figure~\ref{f:smallAlpQuilt} for a special case illustrating the following.
 \begin{Prop}\label{p:closeNbhsQuiltTogether}  Suppose that $\mathcal F = \{ T_{\alpha} \,\vert\, \alpha \in \mathcal I\}$ is a family of piecewise M\"obius interval maps of purely shift digit changes. Suppose further that   $\alpha, \alpha'$ are close neighbors with $\alpha$  typical for their common matching interval.    
 Then $\Omega_{\alpha'}$ can be finitely quilted from $\Omega_{\alpha}$.    Furthermore,  $\Omega_{\alpha}$ has locally constant fibers if and only if $\Omega_{\alpha'}$  does. 
\end{Prop}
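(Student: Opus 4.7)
The plan is to identify $\mathcal C_i$, $d_i$, $a_i$ explicitly and verify the quilting decomposition \eqref{eq:disjointDecomp}. First I would study $\Delta = \Delta_{T_\alpha, T_{\alpha'}}$ using the purely shift digit changes hypothesis together with the close neighbors assumption. Because the orbits $\ell_i, \ell'_i, r_j, r'_j$ all lie in $\mathbb I_{\alpha} \cap \mathbb I_{\alpha'}$ for the relevant ranges of $i, j$, and since $T_\alpha$ and $T_{\alpha'}$ only disagree where the digit differs, $\Delta$ consists of a controlled, finite collection of subintervals, each with endpoints drawn from $\{\ell_0, \ell'_0\}$ and $\{r_0, r'_0\}$. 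On each such component, $T_\alpha(x) = S^{\varepsilon} T_{\alpha'}(x)$ for some sign $\varepsilon \in \{\pm 1\}$. This partitions $\mathcal C = \{(x,y) \in \Omega_\alpha \mid x \in \Delta\}$ into finitely many pieces $\mathcal C_i$, one per component.

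Second, on each $\mathcal C_i$ I would trace both the forward $\mathcal T_\alpha$-orbit and the forward $\mathcal T_{\alpha'}$-orbit (extended piecewise on $\mathbb I_{\alpha'} \times \mathbb R$). The matching condition gives $T_\alpha^m(\ell_0(\alpha)) = T_\alpha^n(r_0(\alpha))$ with the same exponents for $\alpha'$, while the close neighbor hypothesis makes the two orbits move in lockstep along corresponding points of the $\ell$- and $r$-sides of the matching pattern. For a component of $\Delta$ that lies between, say, $\ell_0(\alpha)$ and $\ell_0(\alpha')$, the $\mathcal T_\alpha$-orbit of $\mathcal C_i$ continues through the $\ell$-side of the matching orbit for a prescribed number of steps, while the $\mathcal T_{\alpha'}$-orbit travels through the $r$-side (or vice versa), and the two meet at a common image point. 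The matching relation $M L_J S^{-1} = R_J$ is precisely the algebraic identity that guarantees this rematching; reading off the number of $\mathcal T_\alpha$- and $\mathcal T_{\alpha'}$-steps needed yields $d_i$ and $a_i$, and hence $\mathcal F^{1+d_i}|_{\mathcal C_i} = \mathcal G^{1+a_i}|_{\mathcal C_i}$.

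Third, I would verify the disjointness and measure requirements of Definition~\ref{d:generalQuilting} from the bijectivity of $\mathcal T_\alpha$ on $\Omega_\alpha$ and $\mathcal T_{\alpha'}$ on $\Omega_{\alpha'}$, and confirm that the patched set is exactly $\Omega_{\alpha'}$ by showing any point of $\Omega_\alpha \setminus \Omega_{\alpha'}$ lies in some $\mathcal F^j(\mathcal C_i)$ with $1 \le j \le d_i$ and any point of $\Omega_{\alpha'} \setminus \Omega_\alpha$ lies in some $\mathcal G^j(\mathcal C_i)$ with $1 \le j \le a_i$, an accounting that reduces to the purely shift digit changes property. The final claim about locally constant fibers is then essentially automatic: the quilting only excises and adjoins strips whose $x$-projections are intervals bounded by elements of the initial orbits of $\ell_0(\alpha), \ell_0(\alpha'), r_0(\alpha), r_0(\alpha')$, and the fibers over each strip are translates of the unchanged fibers in $\Omega_\alpha$, so local constancy transfers from $\Omega_\alpha$ to $\Omega_{\alpha'}$ and, by the symmetry of the quilting operation, back again. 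The main obstacle I anticipate is the bookkeeping in step two: one must split into cases by the sign $\varepsilon$ of the shift and by whether the disagreement component is on the $\ell$- or the $r$-side, and in each case verify that the prescribed $(d_i, a_i)$ indeed causes the two orbit trajectories to reconnect with neither gaps nor overlaps.
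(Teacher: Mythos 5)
Your overall skeleton (identify $\mathcal C$, follow both planar orbits until the matching data forces them to rejoin, then check the conditions of Definition~\ref{d:generalQuilting}) is the same as the paper's, and your second step is essentially right: for, say, $\alpha'<\alpha$ one has $\mathcal T_{\alpha'}(\mathcal C)=\mathcal T_{S^{-1}}\circ\mathcal T_{\alpha}(\mathcal C)$, and the close-neighbor hypothesis keeps the two orbits on the $\ell$- and $r$-sides of the matching pattern until $\mathcal T_{\alpha'}^{\,m+1}(\mathcal C)=\mathcal T_{\alpha}^{\,n+1}(\mathcal C)$. Two corrections of detail, though. The rejoining identity is $U_{\alpha}L_JS^{-1}=V_{\alpha}R_J$ (with $U_\alpha, V_\alpha$ the final digit maps; it is forced by $\ell_m(\alpha)=r_n(\alpha)$ holding for \emph{all} parameters in $J$), not the family-wide matching relation $ML_JS^{-1}=R_J$, which is not among the hypotheses of the proposition. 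And $\Delta$ is not a finite union of intervals with endpoints among $\ell_0,\ell'_0,r_0,r'_0$: for $\alpha'<\alpha$ it is $T_{\alpha}^{-1}\bigl([r'_0,r_0)\bigr)$, with one component inside each cylinder, hence typically infinitely many components. What makes the quilting finite is that $\mathcal T_{\alpha}(\mathcal C)$ is the single strip of $\Omega_{\alpha}$ fibering over $[r'_0,r_0)$, so a single block with $(d,a)=(n,m)$ suffices; only one sign of shift can occur for close neighbors in a one-parameter family.

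The genuine gap is in your third step. The disjointness statements in Definition~\ref{d:generalQuilting}(ii)--(iii) --- that the added pieces $\mathcal T_{\alpha'}^{\,i}(\mathcal C)$, $1\le i\le m$, avoid $\Omega_{\alpha}$, and that the added (respectively deleted) pieces are pairwise disjoint --- do not follow from bijectivity of the planar maps: injectivity rules out one-step collisions, not a positive-measure return of the orbit of $\mathcal C$ after several steps, and you cannot in any case presuppose a bijectivity domain $\Omega_{\alpha'}$, since producing it is the point of quilting. This is precisely where the hypothesis that $\alpha$ is \emph{typical} enters, and your proposal never uses it. The paper argues by contradiction: if some $\mathcal T_{\alpha'}^{\,k'}(\mathcal C)$ with $k'\le m$ met $\Omega_{\alpha}$ in positive measure, then (using that the subsequent $\alpha'$-digits are $\alpha$-admissible, together with bijectivity of $\mathcal T_{\alpha}$ on $\Omega_{\alpha}$) one obtains factorizations $L_J=L''_JU'L'_J$ and $R_J=R''_JV'R'_J$ with $L''_J=R''_J$ and $U'L'_JS^{-1}=V'R'_J$, i.e.\ a matching with strictly smaller exponents on a subinterval of $J$, contradicting minimality of $(m,n)$ at the typical $\alpha$; the same device yields the pairwise disjointness of the $\mathcal T_{\alpha'}^{\,i}(\mathcal C)$ and of the $\mathcal T_{\alpha}^{\,j}(\mathcal C)$. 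Without this argument the decomposition \eqref{eq:disjointDecomp}, and hence the finite quilting, is not established; note also that the ``essentially automatic'' transfer of locally constant fibers itself rests on these disjointness statements.
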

\begin{proof} 
For ease, assume that $\alpha' < \alpha$, the other case follows by a symmetric argument. Thus, $\mathcal T_{\alpha}(\mathcal C)$ fibers over $[r'_0, r'_0)$ and hence we have  $\mathcal T_{\alpha'}(\, \mathcal C) = \mathcal T_{S^{-1}}\circ \mathcal T_{\alpha} (\, \mathcal C)$.   Since $\alpha, \alpha'$ are close neighbors, they share a common matching interval $J$, let   $m, n$ be its matching exponents. 
 Let $U = U_{\alpha}$ be the M\"obius transformation such that $\ell_m(\alpha) = U L_J\cdot  \ell_0(\alpha)$ and $V = V_{\alpha}$ be such that $r_n(\alpha) = V R_J\cdot r_0(\alpha)$.  
Since $\mathcal F$ has only purely shift digit changes,   the condition that $\ell'_i, \ell_i, r_j, r'_j  \in  \mathbb I_{\alpha'} \cap \mathbb I_{\alpha}$ for the various $i, j$  implies that  when $i<m$ and $j<n$ these  $\ell'_i, \ell_i, r_j, r'_j$ lie outside of $\Delta_{T_{\alpha}, T_{\alpha'}}$.  Hence,    $\mathcal T_{\alpha'}^{m+1}(\mathcal C) = \mathcal T_{U  L_J S^{-1}} \circ \mathcal T_{\alpha}(\mathcal C)  = \mathcal T_{V R_J} \circ \mathcal T_{\alpha}(\mathcal C) =  \mathcal T_{\alpha}^{n+1}(\mathcal C)$. 

We claim that  $\bigcup_{i=1}^{m} \, \mathcal T_{\alpha'}^{i}(\mathcal C)$ is disjoint from $\Omega_{\alpha}$.  To this end,  let $k'$ be minimal such that $\mathcal T_{\alpha'}^{k'}(\mathcal C) \cap \Omega_{\alpha}$ has positive measure.   (Since $\mathcal T_{S^{-1}}\circ \mathcal T_{\alpha} (\, \mathcal C)$ projects to $[\ell'_0, \ell_0)$ clearly $k' > 1$.)    Let $(x,y) \in \mathcal C$ such that $\mathcal T_{\alpha'}^{k'}(x,y)\in \Omega_{\alpha}$.    
Again the close neighbors property gives that thereafter the forward $\mathcal T_{\alpha'}$-orbit  of this point is given by $\alpha$-admissible M\"obius transformations, and thus agrees with its forward $\mathcal T_{\alpha}$-orbit until we reach the  $\mathcal T_{\alpha'}^{m+1}(x,y) =   \mathcal T_{\alpha}^{n+1}(x,y)$.  The bijectivity of $\mathcal T_{\alpha}$ then implies that there is some $k$ such that $\mathcal T_{\alpha}^k \circ \mathcal T_{\alpha'}^{k'} (x,y) =  \mathcal T_{\alpha}^{n+1}(x,y)$ and hence   $\mathcal T_{\alpha'}^{k'} (x,y) =  \mathcal T_{\alpha}^{n+1-k}(x,y)$.    If $k' < m+1$ then by the positivity of the measure of such points, we deduce that there are factorizations $L_J = L''_J U' L'_{J}$  and $R_J = R''_J V' R'_J$  with  $L''_J = R''_J$
and  $U' L'_J S^{-1} = V' R'_J$.     Since $\alpha'<\alpha$ are close neighbors, there are other values $\alpha'<\alpha'' < \alpha$ that are also close neighbors of $\alpha$ and hence we find that there is an interval $J' \subseteq J$ with matching exponents $m', n'$.  But, this contradicts the definition of $J$ as the matching interval for its typical $\alpha, \alpha'$.   Therefore, we must have that $k' = n+1$ and the disjointness of  $\bigcup_{i=1}^{m}\, \mathcal T_{\alpha'}^{i}(\mathcal C)$ from $\Omega_{\alpha}$ does hold. 

We next claim that  the $\mathcal T_{\alpha'}^{i}(\mathcal C)$ are pairwise disjoint.  To this end, suppose that $\mathcal T_{\alpha'}^{i}(\mathcal C)$ meets $\mathcal T_{\alpha'}^{j}(\mathcal C)$ in positive $\mu$-measure for some $1\le i \le j \le m+1$.   Then the same is true for $\mathcal T_{\alpha'}^{i+m+1-j}(\mathcal C)$ and $\mathcal T_{\alpha'}^{m+1}(\mathcal C) = \mathcal T_{\alpha}^{n+1}(\mathcal C)$ and arguing as above, we find that $i=j$.       The analogous argument shows that the $\mathcal T_{\alpha}^{j}(\mathcal C)$ are disjoint. 

 Finally, due to the disjointness properties which we have shown,  it follows that  $\Omega_{\alpha}$ has locally constant fibers if and only if $\Omega_{\alpha'}$  does.
\end{proof}

  We desire to prove the analog of the above theorem holds also for the setting of  `large' $\alpha, \alpha'$ as defined in \S~\ref{ss:CKS}.  In that setting, digit changes other than shifts can occur.   However, for close neighbors, the location of the second type of digit changes is constrained to a single interval and the digit change is completely explicit; for a hint of this,  see Figure~\ref{f:quiltLargeAlpLessThanDelta}.   In brief, the following is a mild extension of the previous result, but one which we call on in \cite{CaltaKraaikampSchmidtPlanar}.

Recall that the shift $S$ is given in \eqref{e:nameShift}. 
 
\begin{figure}[h]
\scalebox{.7}{
\begin{tikzpicture}[x=6cm,y=6cm] 
\draw  (-.28, -0.72)--(0.6, -0.72); 
\draw  (0.6, -0.72)--(0.6,  -0.44); 
\draw  (0.6, -0.44)--(1.1,  -0.44);   
\draw  (1.1, -0.44)--(1.1,  -0.39);   
\draw  (1.1, -0.39)--(1.72,  -0.39);   
\draw  (1.72, -0.39)--(1.72,  0.35); 
\draw  (1.72, 0.35)--(0.25,  0.35); 
\draw  (0.25, 0.35)--(0.25,  0.2);  
\draw  (0.25, 0.2)--(-.28,  0.2);  
\draw  (-.28, 0.2)--(-.28,  -0.72);
 \draw  (0.0, -0.72)--(0.0, .2); 
 \draw  (1, -0.44)--(1, .35); 
\draw[thin,dashed] (-0.17, -0.72)--(-0.17, 0.2);    
\draw[thin,dashed] (0.19, -0.72)--(0.19, 0.2); 
\draw[thin,dashed] (0.33, -0.72)--(0.33, 0.35);      
\draw[thin,dashed] ( 0.78, -0.44)--( 0.78, 0.35);  
\draw[thin,dashed] (0.92, -0.44)--(0.92, 0.35);  
\draw[thin,dashed] (1.2, -0.39)--(1.2, 0.35);  
\draw[thin,dashed] (1.4, -0.39)--(1.4, 0.35);  
\draw[thin,dashed] (1.08, -0.44)--(1.08, 0.35);  
\node at (-.23, 0)[pin={[pin edge=<-, pin distance=16pt]300:{\tiny{$(-2,1)$}}}] {};
\node at (0.55, -0.2) {\tiny{$(1,1)$}};  
\node at (0.25, 0) {\tiny{$(2,1)$}}; 
\node at (0.85, 0.12) {\tiny{$(-2,2)$}};      
\node at (0.10, 0) {\tiny{$\cdots$}}; 
\node at (-0.1, 0) {\tiny{$\cdots$}};   
\node at (0.96, -0.01) {\tiny{$\cdots$}}; 
\node at (1.05, -0.01) {\tiny{$\cdots$}}; 
\node at (1.14, 0) {\tiny{$(3,2)$}};  
\node at (1.3, 0) {\tiny{$(2,2)$}};   
\node at (1.55,  -0.2) {\tiny{$(1,2)$}};   
\node at (-.3, -0.8) {$(\ell_0, y_{-3})$}; 
\node at (-.45, 0.2) {$(\ell'_0, y_1)$}; 
\node at (0.3,  0.45)  {$(\ell_1, y_2)$};  
\node at (0.62, -0.8)  {$(r_1, y_{-3})$}; 
\node at (1.2, -0.5)  {$(r_2, y_{-2})$}; 
\node at (1.7, -0.5)  {$(r'_0, y_{-1})$};  
\node at (1.75,0.45)  {$(r_0, y_{2})$};   
\node at (0,0)  {$0$};  
\node at (1, -0.5)  {$1$};  
 \foreach \x/\y in {-.28/-0.72, 0.6/-0.72, 1.1/-0.44, 
 1.68/-0.39,  1.72/0.35, 0.25/0.35, -.3/0.2%
} { \node at (\x,\y) {$\bullet$}; } 
\draw[pattern=north west lines, pattern color=orange]    (1.68, -0.3)--(1.68, 0.35) -- (1.72, 0.35)--(1.72,-0.3)-- cycle; 
\draw[pattern=north east lines, pattern color=red]    (1.68, -0.39)--(1.68, -0.3) -- (1.72, -0.3)--(1.72,-0.39)-- cycle; 
\draw[pattern=north west lines, pattern color=orange]     (0.6, -0.65)--(0.6,  -0.44) --  (0.55, -0.44)--(0.55,  -0.65)-- cycle; 
\draw[pattern=north east lines, pattern color=red]          (0.6, -0.72)--(0.6,  -0.65) --  (0.55, -0.65)--(0.55,  -0.72)-- cycle; 
\draw [decorate, ultra thick,
    decoration = {calligraphic brace,mirror,amplitude=6pt}] (0.63,-0.73) --  (0.63,-0.45);
\node at (0.8, -0.6){$\mathcal T_{\alpha}^{\,2}(\mathcal C\,)$};    
\node at (1.9, 0.1){$\mathcal D_1 = $};
\node at (1.73, 0)[pin={[pin edge=<-, pin distance=12pt]0:{$\mathcal T_{\alpha}(\mathcal C_1\,)$}}] {};
\node at (1.73,  -0.35)[pin={[pin edge=<-, pin distance=12pt]0:{$\mathcal T_{\alpha}(\mathcal C_2\,)$}}] {};
\draw[pattern=north west lines, pattern color=blue]    (-0.28, -0.72)--(-0.28, 0.2) -- (-0.3,0.2)--(-0.3,-0.72)-- cycle; 
\draw[pattern=north east lines, pattern color=green]    (0.25,0.2)--(0.22, 0.2) -- (0.22, 0.28)--(0.25,0.28)-- cycle; 
\draw[pattern=north west lines, pattern color=blue]    (0.25,0.35)--(0.22, 0.35) -- (0.22, 0.28)--(0.25,0.28)-- cycle; 
\node at (-.3,  -0.3)[pin={[pin edge=<-, pin distance=12pt]180:{$\mathcal T_{\alpha'}(\mathcal C_1)$}}] {};
\node at (-.55, -0.2){$\mathcal T_{A^{-1}}(\mathcal D_1) = $};
\node at (0.22,  0.22)[pin={[pin edge=<-, pin distance=40pt]165:{$\mathcal T_{\alpha'}(\mathcal C_2\,)$}}] {};
\node at (0.22,  0.32)[pin={[pin edge=<-, pin distance=40pt]130:{$\mathcal T_{\alpha'}^{\,2}(\mathcal C_1\,)$}}] {};
\fill [opacity= 0.15, gray]  (0.72,0.35)--(0.78, 0.35) -- (0.78, -0.44)--(0.72,-0.44)-- cycle; 
\node at (.75, 0.38)[pin={[pin edge=<-, pin distance=12pt]90:{$\mathcal C_2$}}] {};
\end{tikzpicture}  
}
\caption{{\bf Quilting for close neighbors, large $\alpha$.}  Quilting from  $\Omega_{3, 0.86}$ to $\Omega_{3, 0.855}$ (not fully to scale). Blocks $\mathcal B_{i,j}$, also denoted by $(i,j)$.   The forward $\mathcal T_{\alpha}$-orbit of $\mathcal C  = \mathcal C_1 \cup \mathcal C_2$ is deleted, while the forward  $\mathcal T_{\alpha'}$-orbit of $\mathcal C$ is added, until synchronization causes a ``hole" excised due to the first of these, but to be ``patched" due to the second (not shown here, but compare with Figure~\ref{f:smallAlpQuilt}\,).   See Proposition~\ref{p:closeNbhsNotJustShiftChangesQuiltTogether}. }%
\label{f:quiltLargeAlpLessThanDelta}%
\end{figure}
 \begin{Prop}\label{p:closeNbhsNotJustShiftChangesQuiltTogether}  Suppose that $\mathcal F = \{ T_{\alpha} \,\vert\, \alpha \in \mathcal I\}$ is a  family of piecewise M\"obius interval maps and $\alpha' <\alpha$ are close neighbors with $\alpha$  typical for their common matching interval.  Fix $M$ such that $T_{\alpha}( \, \ell_0(\alpha)\,) = M\cdot \ell_0(\alpha)$, and suppose further that  $\Delta_{T_{\alpha}, T_{\alpha'}}$ is the union of its subsets
 \[
 \begin{aligned}
    \Delta_1 &= \{x\,\vert\,  T_{\alpha'}(x) = S^{-1} \cdot T_{\alpha}(x)\,\}\;\;\text{ and}\\ 
    \Delta_2 &= \{x\,\vert\,  T_{\alpha'}(x) = M S^{-1} \cdot T_{\alpha}(x)\,\}.
 \end{aligned}
 \]   
 Then $\Omega_{\alpha'}$ can be finitely quilted from $\Omega_{\alpha}$.    Furthermore,  $\Omega_{\alpha}$ has locally constant fibers if and only if $\Omega_{\alpha'}$  does. 
\end{Prop}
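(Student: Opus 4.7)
The plan is to mimic the proof of Proposition~\ref{p:closeNbhsQuiltTogether} while accounting for the two possible types of digit change. Partition $\mathcal C$ as $\mathcal C = \mathcal C_1 \sqcup \mathcal C_2$ where $\mathcal C_j = \{(x,y)\in\mathcal C \mid x \in \Delta_j\}$, and set $\mathcal D_j = \mathcal T_\alpha(\mathcal C_j)$. On $\mathcal C_1$ the digit change is purely by $S^{-1}$, so that $\mathcal T_{\alpha'}(\mathcal C_1) = \mathcal T_{S^{-1}}\circ\mathcal T_\alpha(\mathcal C_1)$, exactly the situation treated in Proposition~\ref{p:closeNbhsQuiltTogether}. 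On $\mathcal C_2$ one has $\mathcal T_{\alpha'}(\mathcal C_2) = \mathcal T_{MS^{-1}}\circ \mathcal T_\alpha(\mathcal C_2) = \mathcal T_M \circ \mathcal T_{S^{-1}}\circ \mathcal T_\alpha(\mathcal C_2)$, and because $M$ is by definition the first-step M\"obius transformation on the orbit of $\ell_0(\alpha)$, a single $\mathcal T_{\alpha'}$-step here realizes what otherwise would be a shift followed by one $\mathcal T_\alpha$-step beginning near $\ell_0(\alpha)$.

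With this observation in hand I would run the argument of the previous proposition separately on each piece. For $(x,y)\in\mathcal C_1$, the close-neighbor hypothesis guarantees that the intermediate $\alpha'$-orbit entries $\ell'_i, r'_j$ (and their $\alpha$-counterparts) remain outside $\Delta_{T_\alpha,T_{\alpha'}}$, so the forward $\mathcal T_{\alpha'}$-orbit of $\mathcal C_1$ rejoins the forward $\mathcal T_\alpha$-orbit of $\mathcal C_1$ at time $m+1$ on the $\alpha'$-side and $n+1$ on the $\alpha$-side, via the matching relation $UL_JS^{-1} = VR_J$ carried over verbatim. For $\mathcal C_2$ the identity $\mathcal T_{\alpha'} = \mathcal T_M\circ \mathcal T_{S^{-1}}\circ \mathcal T_\alpha$ on one step shows that the $\alpha'$-orbit reaches the synchronization point after $m$ further steps (one $\mathcal T_{\alpha'}$-step already accomplishes the shift plus an extra $\mathcal T_\alpha$-step), so the appropriate exponents $a_i, d_i$ for quilting differ between $\mathcal C_1$ and $\mathcal C_2$ by exactly one. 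In each case the rejoined image lies back in $\Omega_\alpha\cap\Omega_{\alpha'}$, giving the required decomposition \eqref{eq:disjointDecomp}.

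The main obstacle, as in the previous proposition, is verifying disjointness: one must show that (i) the intermediate $\mathcal T_{\alpha'}$-iterates of $\mathcal C_1$ and of $\mathcal C_2$ avoid $\Omega_\alpha$, (ii) they avoid one another, and (iii) the intermediate $\mathcal T_\alpha$-iterates of $\mathcal C$ are pairwise disjoint. For each type the argument is the same extremal one: a hypothetical early collision between two iterates of $\mathcal C_j$, or between an iterate of $\mathcal C_j$ and $\Omega_\alpha$, would force a proper factorization of $L_J = L''_JU'L'_J$ and $R_J = R''_JV'R'_J$ with $L''_J = R''_J$ and $U'L'_JS^{-1} = V'R'_J$ (or the analogue involving the additional factor $M$), thereby producing a proper sub-matching interval of $J$ containing both $\alpha$ and some nearby close neighbor, contradicting the typicality of $\alpha$ in $J$. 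The only genuine new subtlety is cross-type collisions between iterates of $\mathcal C_1$ and $\mathcal C_2$: here the extra factor $M$ appearing only on the $\mathcal C_2$-side means that any such equality would yield a relation among $M, S, L_J, R_J$ that once again factors $J$ into a sub-matching interval, ruled out by typicality.

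Once finite quilting is established, the equivalence of local constancy of fibers between $\Omega_\alpha$ and $\Omega_{\alpha'}$ follows exactly as in Proposition~\ref{p:closeNbhsQuiltTogether}: the deleted and added iterates are all rectangles (products of subintervals of the $\ell_i, r_j$ partitions times constant fibers), so the partition structure of fibers is preserved under the swap prescribed by \eqref{eq:disjointDecomp}.
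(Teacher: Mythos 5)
Your proposal is correct and follows essentially the same route as the paper: partition $\mathcal C$ into $\mathcal C_1,\mathcal C_2$ over $\Delta_1,\Delta_2$, run the argument of Proposition~\ref{p:closeNbhsQuiltTogether} verbatim on $\mathcal C_1$, and use $\mathcal T_{\alpha'}\vert_{\mathcal C_2}=\mathcal T_{MS^{-1}}\circ\mathcal T_{\alpha}$ to see that the $\alpha'$-side synchronization exponent for $\mathcal C_2$ is one less (so $\mathcal T_{\alpha}^{n+1}(\mathcal C_2)=\mathcal T_{\alpha'}^{m}(\mathcal C_2)$), with disjointness handled by the typicality argument. The only small difference is that for cross-type disjointness the paper does not need a new factorization argument: it observes that $\mathcal T_{\alpha'}(\mathcal C_2)$ and $\mathcal T_{\alpha'}^{2}(\mathcal C_1)$ are images under $\mathcal T_M$ of non-intersecting sets fibering over $[\ell'_0,\ell_0]$, so disjointness of the two orbit strings is simply inherited from that of $\mathcal T_{\alpha}(\mathcal C_1)$ and $\mathcal T_{\alpha}(\mathcal C_2)$.
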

\begin{proof}  Let $\mathcal C_1, \mathcal C_2 \subset \Omega_{\alpha}$ be the sets projecting to $\Delta_1$ and $\Delta_2$, respectively.   The proof of Proposition~\ref{p:closeNbhsQuiltTogether}  shows that the main interest here is understanding  the $\mathcal T_{\alpha}$- and $\mathcal T_{\alpha'}$-orbits of $\mathcal C_2$.   

As in the previous proof, let the matching interval of $\alpha, \alpha'$  be $J$, and  let   $m, n$ be its matching exponents. 
We also again let $U = U_{\alpha}$ be the M\"obius transformation such that $\ell_m(\alpha) = U L_J\cdot  \ell_0(\alpha)$ and $V = V_{\alpha}$ be such that $r_n(\alpha) = V R_J\cdot r_0(\alpha)$. 

We have that $\mathcal T_{\alpha}(\mathcal C_1 \cup \mathcal C_2)$ is that part of $\Omega_{\alpha}$ fibering over $[r'_0, r_0]$ and hence   $\mathcal T_{\alpha'}(\mathcal C_2)$ and $\mathcal T_{\alpha'}^{2}(\mathcal C_1)$ are given by applying $\mathcal T_M$ to non-intersecting subsets of the plane fibering over $[\ell'_0, \ell_0]$.   The previous proof applies to show that  $\mathcal T_{\alpha'}^{m+1}(\mathcal C_1) = \mathcal T_{U  L_J S^{-1}} \circ \mathcal T_{\alpha}(\mathcal C_1)  = \mathcal T_{V R_J} \circ \mathcal T_{\alpha}(\mathcal C_1) =  \mathcal T_{\alpha}^{n+1}(\mathcal C_1)$.  Since the $\mathcal T_{\alpha}(\mathcal C_1)$ and $\mathcal T_{\alpha}(\mathcal C_2)$  fiber over $[r'_0, r_0]$, their initial $\mathcal T_{\alpha}$-orbits are given by the same sequence of M\"obius transformations.  Since $\alpha'$ and $\alpha$ are close neighbors, we have in fact that $\mathcal T_{\alpha}^{n+1}(\mathcal C_2) = \mathcal T_{V R_J} \circ \mathcal T_{\alpha}(\mathcal C_2)$.  Since $\mathcal T_{\alpha'}(\mathcal C_2) = \mathcal T_{MS^{-1}}\mathcal T_{\alpha}(\mathcal C_2)$, we conclude that $\mathcal T_{\alpha}^{n+1}(\mathcal C_2) = \mathcal T_{\alpha'}^{m}(\mathcal C_2)$.  The disjointness of the initial $\mathcal T_{\alpha'}$-orbits of $\mathcal C_1$ and $\mathcal C_2$ follows from the disjointness of $\mathcal T_{\alpha}(\mathcal C_1)$  and $\mathcal T_{\alpha}(\mathcal C_2)$.  The disjointness of each of these orbits up to the $m^{\text{th}}$ and $m-1^{\text{st}}$ step is argued as in the previous proof, as is the disjointness of these initial orbits with $\Omega_{\alpha}$.   
\end{proof}

\section{Application: An alternate path to proving properties of  Nakada's $\alpha$-continued fractions}\label{s:AltNakErg}

 We now give a rather technical  application of our methods.  We use an alternate description of the planar extension for each $T_{\alpha}$ given in \cite{KraaikampSchmidtSteiner} and certain   results of \cite{KraaikampSchmidtSteiner}  (not relying on the ergodicity of the $T_{\alpha}$) about the planar extensions of  Nakada's $\alpha$-continued fractions,  to recover the following result.
 \begin{Thm}\label{t:AllNakErg}[Luzzi-Marmi  2008, \cite{LM}]  For every $0<\alpha\le 1$,  the Nakada $\alpha$-continued fraction is ergodic.   
 \end{Thm}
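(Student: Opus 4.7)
The plan is to cover $(0,1]$ by two sets handled by complementary tools: a dense subset covered by the base ergodicity result, Theorem~\ref{t:evenExpanErgoNaturally}, and the remainder covered by transferring ergodicity via quilting, using Proposition~\ref{p:quiltErgo}.

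\emph{Base case.}  First I would establish ergodicity for every $\alpha\in\mathcal E\cup(\mathbb Q\cap(0,1])$.  By Lemma~\ref{p:NakadaBddRange} each such $T_\alpha$ has bounded non-full range.  The remaining hypotheses of Theorem~\ref{t:evenExpanErgoNaturally} should be checked from the explicit planar extensions $(\mathcal T_\alpha,\Omega_\alpha)$ of \cite{KraaikampSchmidtSteiner}: the vertical fibers of $\Omega_\alpha$ have Lebesgue measure uniformly bounded away from $0$ and $\infty$, lie strictly in $\{xy>-1\}$ with positive distance from the hyperbola $y=-1/x$, and there are infinitely many full cylinders, any one of which yields the required bounded fiber-ratio condition.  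Theorem~\ref{t:evenExpanErgoNaturally} then gives ergodicity of $T_\alpha$, and of its planar extension $\mathcal T_\alpha$, for every $\alpha$ in this dense subset of $(0,1]$.

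\emph{Quilting step.}  Now assume $\alpha\notin\mathcal E$, so $\alpha\in J$ for some matching interval $J$ with matching exponents $(m,n)$.  By Example~\ref{eg:goodFamiliesHaveMatching} the Nakada family enjoys the matching relation with $M=W$, and a direct check using $M_{(\varepsilon:d)}\cdot x=\varepsilon/x-d$ shows it is of purely shift digit changes: any disagreement between the $\alpha$- and $\alpha'$-digit of a point $x$ yields $T_{\alpha'}(x)=S^{\pm 1}T_\alpha(x)$.  I would then select a rational $\alpha'\in J$ close enough to $\alpha$ to be a close neighbor in the sense of Definition~\ref{d: tightSyn} and typical for $J$; both requirements define open dense subsets of $J$ (the first because the orbit-containment conditions are open conditions on the parameter, the second because non-typical parameters form a finite set), so such $\alpha'$ exists.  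Proposition~\ref{p:closeNbhsQuiltTogether} then supplies a finite quilting of $\Omega_\alpha$ from $\Omega_{\alpha'}$, and Proposition~\ref{p:quiltErgo} transfers ergodicity from the (base-case) $T_{\alpha'}$ to $T_\alpha$.

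\emph{Main obstacle.}  I expect the technical work to lie almost entirely in the base case: extracting from the planar extension construction of \cite{KraaikampSchmidtSteiner} the uniform boundedness of vertical fibers and of at least one cylinder's fiber-ratios across all rational and exceptional parameters, together with quantitative separation from the hyperbola $y=-1/x$.  In contrast, the quilting step is largely bookkeeping once close-neighbor status is recognized as an open condition on the parameter; the case $\alpha=1$ is the regular Gauss map and falls into the base case automatically, as do the endpoints of matching intervals whenever they are rational.
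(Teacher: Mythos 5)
Your overall architecture---a base case on a dense set of parameters handled by Theorem~\ref{t:evenExpanErgoNaturally}, followed by close-neighbor quilting to the remaining parameters---is the same as the paper's, but there is a genuine gap in the base case, and it sits exactly where the paper's new work lies. Theorem~\ref{t:evenExpanErgoNaturally} takes as a hypothesis that $\mathcal T_\alpha$ is bijective (up to measure zero) on the explicit planar region, and the quilting framework (Definition~\ref{d: finitelyQuilted}) likewise presupposes that \emph{both} parameters already have finite-measure planar bijectivity domains. You propose to read all of this off from the planar extensions $\Omega_\alpha$ of \cite{KraaikampSchmidtSteiner}; however, as the paper emphasizes, the identification in \cite{KraaikampSchmidtSteiner} of the explicit region (here $\Lambda_\alpha$ of \eqref{e:lambda}) as the bijectivity domain is proved there \emph{using} the ergodicity of $T_\alpha$, i.e.\ using the Luzzi--Marmi theorem being reproved. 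So as written your argument is circular, or at best leaves the central step unproved. The paper closes this by Proposition~\ref{l:surjectivity}: an ergodicity-free proof that $\mathcal T_\alpha$ is surjective (hence bijective, by the standard measure-preservation argument) on $\Lambda_\alpha$, resting on the fiber symmetry $[0,1]=\overline{W^t\cdot\Phi_\alpha(x)\cup\Phi_\alpha(x')}$ for pairs $x\in\Delta_\alpha(+1:d)$, $x'\in\Delta_\alpha(-1:d+1)$ with a common image. Relatedly, the ``main obstacle'' you single out is not where the difficulty is: since $\Lambda_\alpha\subset[\alpha-1,\alpha]\times[0,1]$ and every fiber contains $[0,1/(d_\alpha(\alpha)+1)]$, boundedness of the fibers and separation from $y=-1/x$ are immediate; the real work is the bijectivity.

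Two secondary points. First, your dichotomy can miss irrational endpoints of matching intervals (they are not necessarily covered by Lemma~\ref{p:NakadaBddRange}, and close-neighbor quilting is set up for typical interior parameters); the paper handles them separately via (\cite{KraaikampSchmidtSteiner}, Theorem~5), which gives periodic expansions of $\alpha-1$ and $\alpha$ there, hence bounded non-full range, so Theorem~\ref{t:evenExpanErgoNaturally} applies directly. Second, your quilting step with Proposition~\ref{p:quiltErgo} alone would indeed suffice for bare ergodicity (the paper invokes Theorem~\ref{t:finQuiltIsFine} to get the natural-extension statement as well), but only after the bijectivity of $\Lambda_{\alpha}$ for the \emph{target} parameter is established independently, which again requires Proposition~\ref{l:surjectivity} or a substitute for it.
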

In fact,  we rely on Theorem~\ref{t:evenExpanErgoNaturally} and thus  find more than just ergodicity.    In particular, we find the following.

 \begin{Thm}\label{t:quasiModo} For every $0<\alpha\le 1$,  the dynamical systems of $\alpha, \alpha'$ are quasi-isomorphic if and only if they have the same entropy value.   
 \end{Thm}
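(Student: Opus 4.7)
The plan is to show that the natural extension of each $T_\alpha$ with $0 < \alpha \le 1$ is a Bernoulli system and then invoke Ornstein's theorem (Theorem~\ref{t:Ornstein}) to conclude that isomorphism of any two such natural extensions is determined purely by their (common) entropy value. One direction is immediate: quasi-isomorphic systems have isomorphic natural extensions and hence equal entropy of natural extensions, which equal the entropies of the underlying interval maps. For the converse, I would verify the four hypotheses of Rychlik's theorem (Theorem~\ref{t:rychlikBernoulli}) for each $T_\alpha$ and then apply Ornstein.

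Piecewise monotonicity is immediate since each branch is M\"obius. Ergodicity and eventual expansivity of $T_\alpha$ both follow from the application of Theorem~\ref{t:evenExpanErgoNaturally} that underlies Theorem~\ref{t:AllNakErg}; the existence of a unique invariant probability measure equivalent to Lebesgue is then supplied by Remark~\ref{rmk:adlerFolkAndEquiv}. For the Jacobian hypothesis, on every cylinder one has $T_\alpha(x) = \varepsilon/x - d$, so $|1/T_\alpha'(x)| = x^2$; these expressions agree at cylinder boundaries, so $|1/T_\alpha'|$ is simply the restriction of $x \mapsto x^2$ to $\mathbb I_\alpha = [\alpha-1,\alpha]$, which is of bounded variation. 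For the topological hypothesis, that every nonempty open $U \subset \mathbb I_\alpha$ is mapped onto $\mathbb I_\alpha$ by some iterate of $T_\alpha$, I would use eventual expansivity together with the fact that at most two of the infinitely many cylinders of $T_\alpha$ are non-full: iterating $T_\alpha$ expands $U$ geometrically so long as $T_\alpha^n$ stays continuous on $U$, hence some iterate must cross a cylinder boundary, and since all but two cylinders are full, one of the adjacent cylinders is full, so the next iterate covers all of $\mathbb I_\alpha$.

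With Rychlik's hypotheses verified, the natural extension of each $T_\alpha$ is Bernoulli, and Ornstein's theorem then yields the remaining direction: two such systems have isomorphic natural extensions if and only if their entropies agree. The main obstacle is the bookkeeping needed to confirm that Theorem~\ref{t:evenExpanErgoNaturally} applies uniformly to each $\alpha \in (0,1]$ via the planar extensions recalled from \cite{KraaikampSchmidtSteiner}; once that input is in place, all four of Rychlik's conditions are essentially routine and the conclusion follows automatically from Ornstein.
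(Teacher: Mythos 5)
Your overall route is exactly the paper's: verify Rychlik's hypotheses (Theorem~\ref{t:rychlikBernoulli}) so that each natural extension is Bernoulli, then let Ornstein's theorem (Theorem~\ref{t:Ornstein}) reduce quasi-isomorphism to equality of entropy; the easy direction and the bounded-variation check via $|1/T_\alpha'(x)|=x^2$ also match the paper. However, your verification of Rychlik's covering hypothesis has a genuine gap. From eventual expansivity you correctly get that some iterate $T_\alpha^n(U)$ must contain an interior cylinder boundary point, but the conclusion ``since one of the adjacent cylinders is full, the next iterate covers all of $\mathbb I_\alpha$'' is false: containing a boundary point only means $T_\alpha^n(U)$ meets two adjacent cylinders, not that it contains a full cylinder entirely. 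For the Nakada maps every interior boundary between adjacent cylinders is a point where one one-sided limit of $T_\alpha$ is $\alpha$ and the other is $\alpha-1$, so an interval straddling such a boundary is sent by the next iterate to a union of two short intervals anchored at the endpoints $\alpha-1$ and $\alpha$ --- not onto $\mathbb I_\alpha$, and indeed these short intervals may sit inside the two extreme cylinders, which are precisely the ones that can fail to be full. The paper avoids this by asserting the stronger fact that every nonempty open subset of $\mathbb I_\alpha$ contains a \emph{full} cylinder of some rank $m$, whence $T_\alpha^m$ of the open set is all of $\mathbb I_\alpha$; your expansion argument would have to be continued past the boundary-crossing step (tracking the orbits of the endpoint-anchored pieces, or showing the image eventually engulfs a neighborhood of $0$, where full cylinders accumulate) to reach that conclusion.

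A secondary point: Theorem~\ref{t:evenExpanErgoNaturally} does not ``apply uniformly to each $\alpha\in(0,1]$.'' Its hypothesis of bounded non-full range is verified only for rational $\alpha$, for $\alpha\in\mathcal E$, and for endpoints of matching intervals; for the remaining $\alpha$ the paper obtains ergodicity, eventual expansivity and the natural-extension property by quilting from a close rational neighbor (Proposition~\ref{p:closeNbhsQuiltTogether} together with Theorem~\ref{t:finQuiltIsFine}). This does not damage your argument provided you simply cite Theorem~\ref{t:AllNakErg} (whose proof contains that quilting step) for the unique invariant probability measure equivalent to Lebesgue, but as written your plan understates what is needed for those parameters.
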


\subsection{Review of notation and results of \cite{KraaikampSchmidtSteiner} }  Recall from the introduction and \S~\ref{ssHitoshi} that both  \cite{CT}  and \cite{KraaikampSchmidtSteiner} proved the continuity of the entropy function $\alpha \mapsto h(T_{\alpha})$ for Nakada's $\alpha$-continued fractions, when $0<\alpha\le 1$.  The second group of authors created planar extensions for the $T_{\alpha}$   in the form of $\Omega_{\alpha} =  \overline{\big\{\mathcal{T}_\alpha^n(x,0) \mid x \in [{\alpha-1},\alpha),\, n \ge 0\big\}}$, showed the continuity of the $\mu$-mass of these, and argued using Abramov's formula \eqref{e:AbramForm} to reach the continuity result.  All of this built upon the earlier result of Luzzi-Marmi \cite{LM} that each of the interval maps is ergodic with respect to the appropriate measure.   

Here we wish to show that the techniques of this paper can be used to proof the ergodicity of the interval maps.   In brief,  whereas \cite{KraaikampSchmidtSteiner} prove by arguments based upon the ergodicity of $T_{\alpha}$ that   $\Lambda_{\alpha}$ given in \eqref{e:lambda} below is also a valid  expression  for the planar extension of $T_{\alpha}$,  we turn this around and rather prove that $\Lambda_{\alpha}$ gives a planar extension and then apply   Theorem~\ref{t:evenExpanErgoNaturally} to deduce that $T_{\alpha}$ is ergodic. 

We now briefly review some notation and arguments  from \cite{KraaikampSchmidtSteiner}.

  Recall from  \S~\ref{ssHitoshi} that for $\varepsilon \in \{-1,1\}$ and $d \in \mathbb N$, we have $M_{(\varepsilon:d)} = \begin{pmatrix} -d& \varepsilon\\1&0  \end{pmatrix}$ and $N_{(\varepsilon:d)} = \begin{pmatrix} 0& 1\\ \varepsilon &d  \end{pmatrix}$.     
 It is easily verified both that exactly the  digits $(+1:d)$ and $(-1:d+1)$ are such that the image of the open interval $(0,1)$ under $N_{(\varepsilon:d)}$ meets the open interval $(1/(d+1), 1/d)$, and that  for these two digits we have $N_{(\varepsilon:d)} \cdot [0,1] = [\,1/(d+1), 1/d\,]$.     In particular, for each $d$ and any $y$ we find that 
 \begin{equation}\label{e:yRelationNs} 
  N_{(+1:d)} \cdot y =  N_{(-1:d+1)}\cdot (1-y). 
  \end{equation}
Equivalently with $W$ is as in \eqref{e:wmat},  $N_{(+1:d)} \cdot y =  N_{(-1:d+1)}W^t\cdot y$.    Note that since $W$ is of projective order two,  this accords with the easily verified identify:  $M_{(+1:d)} = M_{(-1:d+1)} W$.
  
 For $\alpha \in (0,1]$, we let   $d_\alpha(\alpha)$ be the first $\alpha$-digit of $r_0(\alpha) = \alpha$ and define
\[ \mathscr A_{\alpha} = \{\,(-1:d') \mid 2 \le d' \le d_\alpha(\alpha)+1\} \cup \{(+1:d_\alpha(\alpha))\}.\]

 The approach of \cite{KraaikampSchmidtSteiner} is to list the matching intervals of parameter $\alpha$ by way of certain words $v$, the details of which are not necessary for the current application.   
   For each $v$, \cite{KraaikampSchmidtSteiner} shows that the matching interval indexed by $v$ contains a unique  `atypical' value, this is $\alpha = \chi_v$ which is identified by  the $T_{\chi_v}$-orbits of the endpoints of $\mathbb I_{\chi_v}$ both reaching $x=0$ one step `earlier' than for the matching for the typical values in this matching interval.

Recall that $\mathcal E$ is the complement in $(0,1]$ of the union of the matching intervals of $\alpha$.   For  $\alpha \in \mathcal E$ or $\alpha =\chi_v$  for some $v$,  let  $\mathscr{L}'_{\alpha}$ be the words in 
$\mathscr A_{\alpha}$ which are  admissible $\alpha$-expansions (as well as the empty word).    Then (\cite{KraaikampSchmidtSteiner}, Lemma~7.11) shows that 
  the region, which we rename for clarity's sake, 
\begin{equation}\label{e:lambda}
 \Lambda_{\alpha} = \overline{\bigcup_{w \in \mathscr{L}'_\alpha} T_{\alpha}^{\vphantom{I}|w|}(\Delta_{\alpha}(w)\,) \times N_w \cdot \big[0, \tfrac{1}{d_\alpha(\alpha)+1}\big]}
 \end{equation}
is a bijectivity domain for $\mathcal T_{\alpha}$.  In fact, the lemma is stated for all  $\alpha$, upon making minor adjustments for the remaining $\alpha$:    Each such   `remaining' $\alpha$  is in the same matching interval as $\chi_v$ for some $v$, and one defines  $\mathscr{L}'_{\alpha} =  \mathscr{L}'_{\chi_v}$ and 
replaces $T_{\alpha}^{\vphantom{I}|w|}(\Delta_{\alpha}(w)\,)$ by the $J^\alpha_{\vphantom{I}w}$ of  (\cite{KraaikampSchmidtSteiner}, (7.2) ) --- this last is only a change in the cases that $w$ has a suffix which consists of a prefix of the digits of the $\alpha$-expansion of either $\alpha$ or $\ell_0(\alpha) = \alpha-1$ extending beyond where matching occurs (in a sense, the adjustment is to  keep  the digits up to one step before matching).    To repeat, their proof (in all cases) relies in part on the ergodicity of the $T_{\alpha}$ and involves showing that the bijectivity domain $\Omega_{\alpha}$  is equal to what we have denoted $\Lambda_{\alpha}$.       We now turn this around, and  for $\alpha \in (0,1)$ begin with $\Lambda_{\alpha}$ to show ergodicity of $T_{\alpha}$ and more.

\subsection{Proving $\Lambda_{\alpha}$ is a bijectivity domain to  conclude $T_{\alpha}$ is ergodic}   
    We  aim to show that $\mathcal T_{\alpha}$ is bijective on $\Lambda_{\alpha}$ (as always, here and throughout, up to $\mu$-measure zero sets).  
    
\subsubsection{Surjectivity implies injectivity}    Since a M\"obius transformation is identified by its values on three points, and each $\mathcal T_{(\varepsilon:d)}$ is (locally) measure preserving,  that surjectivity implies  injectivity can be argued as in (\cite{KraaikampSchmidtSteiner}, Lemma~5.2):   In brief, $\Lambda_{\alpha}$ can be partitioned by blocks $\mathcal D_a$, each projecting to its cylinder indexed by $a$, upon each of which $\mathcal T_{\alpha}$ is injective and measure preserving;  the sum over the various $a$ of the $\mu(\mathcal T_{\alpha}(\mathcal D_a))$ hence equals $\mu(\Lambda_{\alpha})$,  which by the surjectivity equals  $\mu(\mathcal T_{\alpha}(\Lambda_{\alpha}))$, but this in turn  equals the measure of the union of the $\mathcal T_{\alpha}(\mathcal D_a)$.  Since the sum of the $\mu(\mathcal T_{\alpha}(D_a))$ equals the measure of the union of the $\mathcal T_{\alpha}(\mathcal D_a)$, injectivity holds up to measure zero.

  Certainly the image of $\Lambda_{\alpha}$
under $\mathcal T_{\alpha}$ contains the union over the non-empty words $w \in \mathscr{L}'_{\alpha}$ of the $T_{\alpha}^{\vphantom{I}|w|}(\Delta_{\alpha}(w)\,) \times N_w \cdot \big[0, \tfrac{1}{d_\alpha(\alpha)+1}\big]$.     The main challenge is to show that all of $\mathbb I_{\alpha} \times \big[0, \tfrac{1}{d_\alpha(\alpha)+1}\big]$ is in the image.  For this, we introduce notation for the fiber in $\Lambda_{\alpha}$ over a point $x$:  for each $x \in \mathbb I_{\alpha}$, let $\Phi_{\alpha}(x) = \{y \mid (x,y)\in \Lambda_{\alpha}\}$.

\subsubsection{Surjectivity follows from fiber symmetry}\label{sss:RectangleAndFiberComplements}   We show surjectivity of $\mathcal T_{\alpha}$ 
by way of an interesting detail that seems not to have been observed in the literature.     The fibers over the cylinders of the values not in $\mathscr A_{\alpha}$ satisfy a certain symmetry property.   Note that the matrix  $W$ from \eqref{e:wmat} acts as $x \mapsto -1/(x+1)$ while its transpose acts by $W^t \cdot y = 1-y$.  We will show that  for any   sufficiently large negative $x$,  the sets  $W^t \cdot \Phi_{\alpha}(x)$ and $\Phi_{\alpha}(W\cdot x)$ are disjoint and have union whose closure is $[0,1]$.   The reader is encouraged to view the various representations of planar domains $\Omega_{\alpha}$ given in, say,  \cite{KraaikampSchmidtSteiner} to see that this is reasonable.    See also Figure~\ref{f:NakadaCFpt39NatExt}.

\def\litG{0.618034}%
\def\lGSqrd{0.381966}%
\def\lGoverSqrtFive{0.276393}%
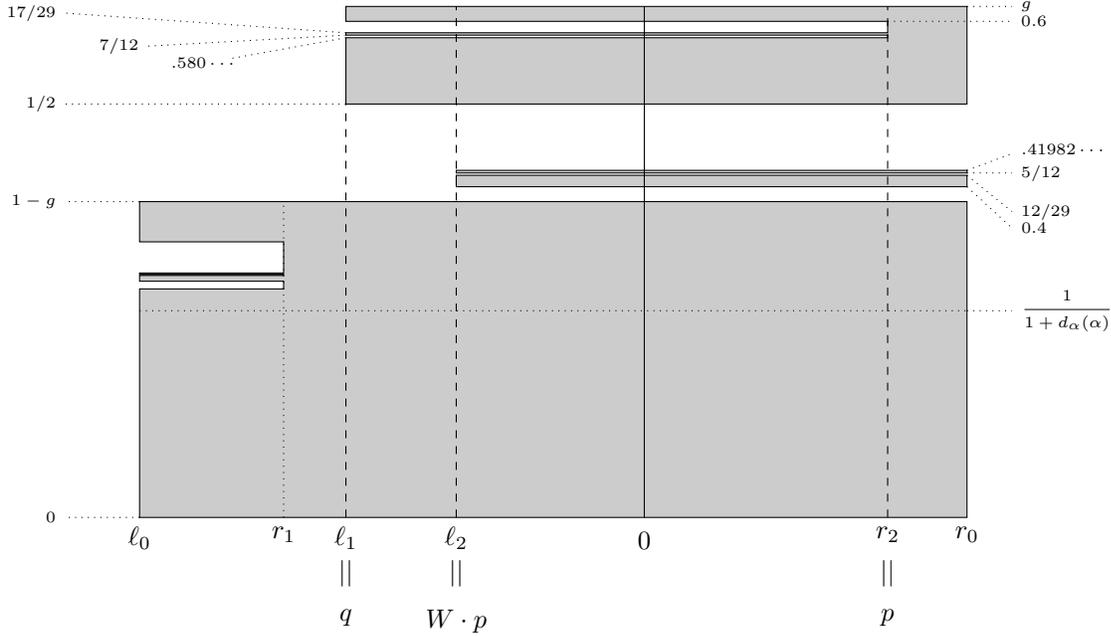
\begin{figure}[ht]
\begin{tikzpicture}[scale=11,fill=black!20]
\filldraw(-0.61,0.0)--(-0.61,\lGoverSqrtFive)--(-17/39,\lGoverSqrtFive)--
(-17/39,2/7)--(-0.61,2/7)--(-0.61,12/41)--(-17/39,12/41)--(-17/39,5/17)--
(-0.61,5/17)--(-0.61,0.295686)--(-17/39,0.295686)--(-17/39,1/3)--(-0.61,1/3)
--(-0.61,\lGSqrd)--(0.39,\lGSqrd)--(0.39,0.0)--cycle;
\filldraw(-5/22,0.4)--(-5/22,12/29)--(0.39,12/29)--(0.39,0.4)--cycle;
\draw[thin,dotted](.39,0.4)--(.4444,0.35) node[below, right]{\tiny{$0.4$}};
\draw[thin,dotted](.39,12/29)--(.4444,0.37);
\node[right]at(.4444,0.37) {\tiny{$12/29$}};
\filldraw(-5/22,5/12)--(-5/22,0.41982)--(0.39,0.41982)--(0.39,5/12)--cycle;
\draw[thin,dotted](.39,5/12)--(.4444, 5/12);
\node[below, right] at(.4444, 5/12) {\tiny{$5/12$}};
\draw[thin,dotted](.39,0.41982)--(.4444, 0.44);
\node[right] at(.4444, 0.445) {\tiny{$.41982\cdots$}};
\filldraw(-0.36066,0.5)--(-0.36066,0.580179)--(5/17,0.580179)--(5/17,7/12)--(-0.36066,7/12)--(-0.36066,17/29)--(5/17,17/29)--(5/17, 0.6)--(-0.36066,0.6)--(-0.36066,\litG)--(0.39,\litG)-- (0.39,0.5)--cycle;
\draw[thin,dotted](-0.36066,.5)--(-0.7,.5)node[left]{\tiny{$1/2$}};
\draw[thin,dotted](-0.36066,0.5802)--(-0.5,0.555);
\node[left] at (-0.48,0.55) {\tiny{$.580\cdots$}};
\draw[thin,dotted](-0.36066,7/12)--(-0.6,.57) node[left]{\tiny{$7/12$}};
\draw[thin,dotted](-0.36066,17/29)--(-0.7,0.61) node[left]{\tiny{$17/29$}};
\draw[thin,dotted](5/17,0.6)--(.4444,0.6) node[right]{\tiny{$0.6$}};
\draw[thin,dotted](.39,\litG)--(.4444,\litG) node[right]{\tiny{$g$}};
\draw (.0,.618)--(0,0) node[below=.5ex]{$0$};
\node[below] at (-.61,.0) {$\ell_0$};
\node[below] at (-5/22,.0) {$\ell_2$};
\node[below] at (-5/22,-0.04) {$||$};
\node[below] at (-5/22,-0.1) {$W\cdot p$};
\draw[thin,dashed](-5/22,.0)--(-5/22,.618);
\node[below] at (0.39,.0) {$r_0$};
\node[below] at (5/17,.0) {$r_2$};
\node[below] at (5/17,-0.04) {$||$};
\node[below] at (5/17,-0.1) {$p$};
\draw[thin,dashed](5/17,.0)--(5/17,\litG);
\node[below] at (-17/39,0) {$r_1$};
\draw[dotted](-17/39,.0)--(-17/39,0.381966);
\node[below] at (-0.36066,.0) {$\ell_1$};
\node[below] at (-0.36066,-.04) {$||$};
\node[below] at (-0.36066,-0.1) {$q$};
\draw[dashed](-0.36066,.0)--(-0.36066,0.5);
\draw[thin,dotted](-.61,0)--(-.7,0) node[left]{\tiny{$0$}};
\draw[thin,dotted](-.61,0.381966)--(-.7,0.381966)node[left]{\tiny{$1-g$}};
\draw[dotted] (-0.61, 1/4)--(0.39, 1/4);
\draw[thin,dotted](.39,1/4)--(.4444,1/4) node[right]{\tiny{$\dfrac{1}{1+d_{\alpha}(\alpha)}$}};
\end{tikzpicture}
\caption{The planar domain $\Omega_{\alpha}$ for Nakada's continued fraction of $\alpha = 0.39$, see Figure~\ref{f:nakadaCfPt39}  for the graph of $T_{\alpha}$.   Marked $x$-values include: $r_i = T_{\alpha}^i(\alpha), \; \ell_i =  T_{\alpha}^i(\alpha-1), 0\le i\le 2$.  Here $d_{\alpha}(\alpha) = 3$.   The $y$-fibers $\Phi(x)$ are constant for $x$ between $W\cdot p$ and $p$, and $[0, 1]$ is the closure of   $\Phi(x) \sqcup W^t\cdot \Phi(x)$ for each such $x$. The proof of Proposition~\ref{l:surjectivity}  refers to $p, W\cdot p, q$ in general cases.}
\label{f:NakadaCFpt39NatExt}
\end{figure}

 \begin{Prop}\label{l:surjectivity}      For $\alpha \in (0,1)$ the map $\mathcal T_{\alpha}$ is bijective from $\Lambda_{\alpha}$ to itself. 
 \end{Prop}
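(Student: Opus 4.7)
The plan is to first reduce bijectivity to surjectivity, following the measure sketch the authors give in the paragraphs between the statement and the end of the excerpt: on each block $\mathcal D_a \subset \Lambda_\alpha$ projecting to the cylinder indexed by $a$, $\mathcal T_\alpha$ agrees with some $\mathcal T_{(\varepsilon:d)}$ and hence is locally $\mu$-measure preserving, so $\sum_a \mu(\mathcal T_\alpha(\mathcal D_a)) = \mu(\Lambda_\alpha)$. Once we establish $\mathcal T_\alpha(\Lambda_\alpha) = \Lambda_\alpha$ up to $\mu$-null sets, this equality of the sum with $\mu$ of the union forces the images $\mathcal T_\alpha(\mathcal D_a)$ to be pairwise $\mu$-disjoint, yielding injectivity up to $\mu$-null sets.

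For surjectivity I would argue fiberwise. Fix $x' \in \mathbb I_\alpha$ and decompose $\Phi_\alpha(x')$ along the target intervals $N_{(\varepsilon:d)}\cdot[0,1] = [1/(d{+}1), 1/d]$. For each $d$, the only preimages of $x'$ under $T_\alpha$ that contribute to the fiber in $[1/(d{+}1), 1/d]$ are $x := M_{(+1:d)}^{-1}\cdot x' = 1/(x'+d)$ and $z := M_{(-1:d+1)}^{-1}\cdot x' = -1/(x'+d+1)$, whenever each lies in $\mathbb I_\alpha$. The identity $M_{(+1:d)} = M_{(-1:d+1)}W$ gives $z = W\cdot x$, and $N_{(+1:d)} = N_{(-1:d+1)}W^t$ transforms the required covering identity into the fiber symmetry
\[
W^t \cdot \Phi_\alpha(x) \sqcup \Phi_\alpha(W\cdot x) \;=\; [0,1]
\]
up to $\mu$-null sets, with a one-sided analog when only one of $x, W\cdot x$ lies in $\mathbb I_\alpha$. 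The base slab $\mathbb I_\alpha \times [0, 1/(d_\alpha(\alpha)+1)]$ will be accounted for by the preimage corresponding to the first $\alpha$-digit of $r_0(\alpha)$, for which $\mathscr A_\alpha$ is chosen to include $(+1{:}d_\alpha(\alpha))$.

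The main obstacle is proving this fiber symmetry, and my approach would be structural. By construction, $\Phi_\alpha(x)$ is the union of intervals $N_w \cdot [0, 1/(d_\alpha(\alpha)+1)]$ indexed by words $w \in \mathscr L'_\alpha$ with $x \in T_\alpha^{|w|}(\Delta_\alpha(w))$. I would set up a bijection between the towers over $x$ and the towers over $W\cdot x$ by toggling the leading digit between $(+1{:}d)$ and $(-1{:}d{+}1)$ via $M_{(+1:d)} = M_{(-1:d+1)}W$; the corresponding $y$-intervals are then related by $W^t$, so that the $y$-interval over $W\cdot x$ becomes the complement in $[1/(d{+}1), 1/d]$ of the $W^t$-image of the interval over $x$. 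The explicit choice $\mathscr A_\alpha = \{(-1{:}d') : 2 \le d' \le d_\alpha(\alpha)+1\} \cup \{(+1{:}d_\alpha(\alpha))\}$ is tuned precisely so that for each $d$ the slab $[1/(d{+}1), 1/d]$ is covered exactly twice except at the topmost copy where it is covered once, matching the $N_{(+1:d)}$ / $N_{(-1:d+1)}$ double cover; this counting is the crux of the tiling.

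Finally, I would treat the atypical parameters $\alpha = \chi_v$ by invoking the $J^\alpha_w$-replacements already present in \cite{KraaikampSchmidtSteiner}, and handle $x'$ in the extreme cylinders (where only one of $x, W\cdot x$ lies in $\mathbb I_\alpha$) via the one-sided version of the fiber symmetry, which follows from the same bijection-of-towers argument restricted to the admissible half. Summing over $d$ then yields $\mathcal T_\alpha(\Lambda_\alpha) = \Lambda_\alpha$, and the reduction of the first paragraph completes the proof of bijectivity. With bijectivity on $\Lambda_\alpha$ in hand, the hypotheses of Theorem~\ref{t:evenExpanErgoNaturally} become verifiable directly from the explicit geometry of $\Lambda_\alpha$, delivering ergodicity of $T_\alpha$ and identifying $\Lambda_\alpha$ as the natural extension.
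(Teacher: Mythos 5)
Your reduction of bijectivity to surjectivity via the measure argument, and your identification of the fiber symmetry $[0,1]=\overline{W^t\cdot\Phi_\alpha(x)\cup\Phi_\alpha(W\cdot x)}$ for the two preimages $x$ and $W\cdot x$ of a point of the low slab as the crux, coincide with the paper's strategy. The gap is in your proposed proof of that symmetry. The ``toggle the leading digit'' bijection of towers does not get off the ground: the words $w\in\mathscr{L}'_\alpha$ indexing the towers are words in the restricted alphabet $\mathscr A_\alpha$, which contains $(+1{:}d)$ only for $d=d_\alpha(\alpha)$, so replacing a digit $(+1{:}d)$ by $(-1{:}d{+}1)$ (or vice versa) generally leaves the admissible language; more importantly, whether $x$ lies in $T_\alpha^{|w|}(\Delta_\alpha(w))$ while $W\cdot x$ lies in the corresponding range for the toggled word depends on the precise ranges $T_\alpha^{|w|}(\Delta_\alpha(w))$, that is, on the $T_\alpha$-orbits of the endpoints $\alpha-1$ and $\alpha$. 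Your remark that $\mathscr A_\alpha$ is ``tuned precisely so that the counting works out'' is exactly the assertion that has to be proved, not an argument for it.

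The paper supplies two inputs that are missing from your plan. First, it imports from \cite{KraaikampSchmidtSteiner} (Lemma~5.1, via the rewriting of $T_\alpha$-orbits in terms of regular continued fraction orbits) that the square $\big[0,\tfrac{1}{d_\alpha(\alpha)+1}\big]^2$ is contained in the closure of the $\mathcal T_\alpha$-orbits of its own points; since returns to this square occur exactly through digits outside $\mathscr A_\alpha$, this seeds the fiber symmetry for at least those pairs $(x,W\cdot x)$ arising along such returns. Second, it propagates the symmetry to all pairs by a case analysis on $\alpha$: for $\alpha\in\mathcal E$ and $\alpha=\chi_v$ the fibers are constant on the relevant range and propagation is immediate, but for $\alpha$ in the interior of a matching interval the fibers take three distinct values on $[q,W\cdot p)$, $[W\cdot p,p)$, $[p,\alpha)$, and one must prove the compensation identity $\Phi_{\alpha}(p)\cup W^t\cdot\Phi_{\alpha}(q)=\Phi_{\alpha}(W\cdot p)\cup W^t\cdot\Phi_{\alpha}(W\cdot p)$, which uses the matching relation $T_\alpha^{k'-1}(\alpha-1)=M_vE\cdot\alpha$, $T_\alpha^{k-1}(\alpha)=WM_vE\cdot\alpha$ of \cite{KraaikampSchmidtSteiner}. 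Your plan neither provides a substitute for the orbit-density input nor addresses this hardest case; your ``one-sided analog'' likewise presupposes that $\Phi_\alpha(x)$ fills $[0,1]$ for the relevant $x$, which needs the same structural information. As written, the surjectivity argument is therefore incomplete, while the final reduction to injectivity and the subsequent appeal to Theorem~\ref{t:evenExpanErgoNaturally} are fine.
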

\begin{proof}  Fix $\alpha$.    
From the definition of $\Lambda_{\alpha}$,  surjectivity onto the complement of   $\mathbb I_{\alpha} \times \big[0, \tfrac{1}{d_\alpha(\alpha)+1}\big]$ is immediate.  

 The proof of   (\cite{KraaikampSchmidtSteiner}, Lemma~5.1) shows,   based upon the fact that there is an explicit manner to rewrite $T_{\alpha}$-orbits in terms of regular continued fraction $T_1$-orbits,  that the rectangle  $\big[0, \tfrac{1}{d_\alpha(\alpha)+1}\big]^2$ is  contained in the closure of the $\mathcal T_{\alpha}$-orbits of the points contained in this rectangle.    The admissible $(\varepsilon:d) \notin \mathscr A_{\alpha}$ are exactly those values such that  $N_{(\varepsilon:d)}\cdot [0,1] \subset \big[0, \tfrac{1}{d_\alpha(\alpha)+1}\big]$.        Hence, the $y$-values here show that each $\mathcal T_{\alpha}$-orbit returns to the rectangle only upon an application of some $\mathcal T_{(\varepsilon:d)}$ with $(\varepsilon:d) \notin \mathscr A_{\alpha}$.

  It follows that  for each  $d \ge d_\alpha(\alpha)+1$, we have that $[1/(d+1), 1/d]$ equals the closure of the union of $N_{(+1:d)}\cdot \Phi_{\alpha}(x)$  with $N_{(-1:d+1)}\cdot \Phi_{\alpha}(x')$   whenever  $x \in \Delta_{\alpha}(+1:d)$ and $x' \in \Delta_{\alpha}(-1:d+1)$    are  such that $T_{\alpha}$ sends them to the same value in  $\big[0, \tfrac{1}{d_\alpha(\alpha)+1}\big]$.    By \eqref{e:yRelationNs} (in the equivalent form given in the line directly below it), this implies that $[0,1] =   \overline{W^t\cdot \Phi_{\alpha}(x)  \cup \Phi_{\alpha}(x')}$ for each such pair.

Now, for $\alpha \in \mathcal E$  (\cite{KraaikampSchmidtSteiner}, Lemma~7.9) shows that all of the digits of the expansions of  both $\alpha-1$ and $\alpha$ are contained in $\mathscr A_{\alpha}$.   Since the only non-full cylinders are associated with prefixes of these expansions,  the fibers  $\Phi_{\alpha}(x)$ are constant for all $x \in \big[\tfrac{-1}{d_\alpha(\alpha)+1}, \tfrac{1}{d_\alpha(\alpha)+1}\big]$.   Therefore,   $[0,1] =   \overline{W^t\cdot \Phi_{\alpha}(x)  \cup \Phi_{\alpha}(x')}$ holds for {\em every} pair $x \in \Delta_{\alpha}(+1:d)$ and $x' \in \Delta_{\alpha}(-1:d+1)$  that are sent by $T_{\alpha}$ to the same value.   That is,   the closure of the union of the images of $\Lambda_{\alpha}$ under the various $\mathcal T_{(+1:d)}$ and  $\mathcal T_{(-1:d+1)}$ fills out all of $\mathbb I_{\alpha}\times \big[0, \tfrac{1}{d_\alpha(\alpha)+1}\big]$.   Surjectivity holds in this case. 

   In the case of $\alpha$ of the form $\chi_v$,   (\cite{KraaikampSchmidtSteiner}, Lemma~7.9) shows that the digits of the expansions of the two endpoints $\alpha-1, \alpha$ remain in $\mathscr A_{\alpha}$ until they match at the value zero.  One finds that the fibers $\Phi_{\alpha}(x)$ are constant for all $x>0$ and also for all $x<0$ whose $\alpha$-digit is at least $(-1:  d_{\alpha}(\alpha)+2)$.   We conclude also in this case that  $[0,1] =   \overline{W^t\cdot \Phi_{\alpha}(x)  \cup \Phi_{\alpha}(x')}$ holds for {\em every} pair $x,x'$ as above, and again the result holds. \\ 
   
    In the remaining case,  $\alpha$ is in the same matching interval as some $\chi_v$,  and   (\cite{KraaikampSchmidtSteiner}, Lemma~7.9) shows that up to their penultimate digits before matching, digits of the expansions of the two endpoints $\alpha-1, \alpha$ remain in $\mathscr A_{\alpha}$; (\cite{KraaikampSchmidtSteiner}, Lemma~6.2) implies that the values exactly before matching differ by an application of $W$.  Let $p$ denote the larger of these values, thus $W\cdot p$ is the other value; also let $q$  be the maximum value of the remainder of $T_{\alpha}$-orbits of the endpoints $\alpha-1, \alpha$ up to these index values.  The fibers $\Phi_{\alpha}(x)$ are  constant over each of the intervals  $[q, W\cdot p),   [W\cdot p, p), [p, \alpha)$,  with respective values  $\Phi_{\alpha}(q), \Phi_{\alpha}(W\cdot p), \Phi_{\alpha}(p)$.  Directly related to this is that for any $d>0$,  points $x \in \Delta_{\alpha}(+1:d)$ and $x' \in \Delta_{\alpha}(-1:d+1)$  are sent by $T_{\alpha}$ to the same value if and only if  $x' = W\cdot x$.  (We could have used this in the previous cases, but preferred to minimize notation.)

Since $p>0$, there exists some $d>d_{\alpha}(\alpha)$ such that $p$ is strictly greater than all values in $\Delta_{\alpha}(+1:d)$.    Hence   for all   $x \in \Delta_{\alpha}(+1:d)$ we have  $\Phi_{\alpha}(x) = \Phi_{\alpha}(W\cdot p)$ and since also $W\cdot p < W\cdot x$ also $\Phi_{\alpha}(W\cdot x) = \Phi_{\alpha}(W\cdot p)$.  
Since there are $x \in \Delta_{\alpha}(+1:d)$ such that $[0,1] =   \overline{\Phi_{\alpha}(x)  \cup W^t\cdot \Phi_{\alpha}(W\cdot x)}$, we find that it is always the case that 
 \begin{equation}\label{e:midFiberSymm} 
 [0,1] =   \overline{\Phi_{\alpha}(W\cdot p)  \cup W^t\cdot  \Phi_{\alpha}(W\cdot p)}.
\end{equation} 
From this for  any $0<x<p$ we find that $[0,1] =   \overline{\Phi_{\alpha}(x)  \cup W^t\cdot  \Phi_{\alpha}(W\cdot x)}$ and in particular for all $d>0$ such that $p$ is strictly greater than all values in $\Delta_{\alpha}(+1:d)$ we have that 
$\mathbb I_{\alpha} \times [1/d, 1/(d+1)]$ is contained in $\mathcal T_{\alpha}(\Lambda_{\alpha})$. 

We next claim that 
\[\Phi_{\alpha}(p)  \cup W^t\cdot  \Phi_{\alpha}(q) = \Phi_{\alpha}(W\cdot p)  \cup W^t\cdot  \Phi_{\alpha}(W\cdot p).\] 
To prove this, recall that  \cite{KraaikampSchmidtSteiner} use $k',k$ as matching exponents and use $E$ to denote the matrix which acts as shift by $-1$, and show that there is an $M_v$ such that $T_{\alpha}^{k'-1}(\alpha-1) = M_v\cdot (\alpha-1) = M_v E \cdot \alpha$   and $T_{\alpha}^{k-1}(\alpha) = W M_v E\cdot \alpha$.   Thus $\{p, W\cdot p\} =  \{M_v E \cdot \alpha, WM_v E \cdot \alpha \}$ and $\Phi_{\alpha}(W\cdot p)$ is the union of $\Phi_{\alpha}(q)$ with one of either $N_v\cdot \Phi_{\alpha}(\alpha-1)$ or $W^t N_v (E^{-1})^t \cdot \Phi_{\alpha}(\alpha)$.  Similarly,  $\Phi_{\alpha}(p)$ is the union of $\Phi_{\alpha}(q)$ with both   $N_v\cdot \Phi_{\alpha}(\alpha-1)$ and  $W^t N_v (E^{-1})^t \cdot \Phi_{\alpha}(\alpha)$.     Since $W$ and hence $W^t$ is of projective order two, the claim holds. 

Using \eqref{e:midFiberSymm} and the claim,  we find for all $d>d_{\alpha}(\alpha)$ that $\mathbb I_{\alpha} \times [1/d, 1/(d+1)]$ is contained in $\mathcal T_{\alpha}(\Lambda_{\alpha})$.  Thus, our proof of surjectivity is complete.    

  For completeness, we   recall that the previous subsubsection outlines a proof showing that injectivity  now follows.
  \end{proof}

 \subsection{Proof of ergodicity}
  
We now complete our proof of Theorem~\ref{t:AllNakErg}.  It is immediate that each $\Lambda_{\alpha}$ has positive $\mu$-measure.    Since   $N_a\cdot [0,1] \subset [0,1]$ for every  possible digit for any given $\alpha$,   it is clear that 
$\Lambda_{\alpha} \subset   [\alpha-1,  \alpha] \times [0,1]$.  Therefore,  the compact $\Lambda_{\alpha}$ is bounded away from $y = -1/x$ and has finite vertical fibers.
Recall that Lemma~\ref{p:NakadaBddRange}  guarantees that every rational $\alpha \in (0,1]$ and every $\alpha \in \mathcal E$ is of bounded non-full range.   Furthermore,  (\cite{KraaikampSchmidtSteiner} Theorem~5) shows that if $\alpha$ is an  endpoint of a matching interval then both  $\alpha-1$ and $\alpha$ have periodic $T_{\alpha}$-expansions and thus these maps also are of bounded non-full range.       By Theorem~\ref{t:evenExpanErgoNaturally} with  $ \Omega =  \Lambda_{\alpha}$, we find ergodicity of $T_{\alpha}$ and the other properties listed in the statement of the theorem.    Each of the remaining $\alpha' \in (0,1)$ lies in the interior of some matching interval and the density of the rationals gives that $\alpha'$ has some typical  $\alpha$ as a close neighbor.   From Proposition~\ref{p:closeNbhsQuiltTogether}, combined with  Theorem~\ref{t:finQuiltIsFine}  we have that each   $\mathcal T_{\alpha}: \Lambda_{\alpha} \to \Lambda_{\alpha}$ gives the natural extension to the system of $T_{\alpha}$, which is in particular ergodic.  
 
\subsection{Proof of quasi-isomorphism class determined by entropy}   
 We now sketch the proof of Theorem~\ref{t:quasiModo}.    We desire to apply the Rychlik result, Theorem~\ref{t:rychlikBernoulli}.        By the ergodicity result,   for each $\alpha \in (0, 1]$ the map $T_{\alpha}$ has a unique invariant probability measure equivalent to Lebesgue.   Furthermore, each is such that every open subset of $\mathbb I_{\alpha}$ contains a full rank $m$ cylinder for some $m \in \mathbb N$; hence all of $\mathbb I_{\alpha}$ is contained in the $T_{\alpha}^m$-image of this open subset.   Each   $M_{(\varepsilon:d)} = \begin{pmatrix} -d& \varepsilon\\1&0  \end{pmatrix}$   defines a function of $x$ whose derivative has absolute value $x^{-2}$.  Thus, $|1/T_{\alpha}'|$  is certainly bounded on all of $\mathbb I_{\alpha}$ for any of the $\alpha$.   By Theorem~\ref{t:rychlikBernoulli}, the natural extension of each $T_{\alpha}$ is Bernoulli.    Therefore, the result holds due to Ornstein's fundamental result,  Theorem~\ref{t:Ornstein}.


\begin{thebibliography}{HKLM}   

 
\bibitem[Ab]{AbramovFlow}  L.~M.~Abramov, 
{\em On the entropy of a flow. (Russian)},
Dokl. Akad. Nauk SSSR 128 (1959), 873--875.
 
\bibitem[Ad1]{AdlerRevisit} R. Adler, 
{\em F-expansions revisited}.   In:  Recent advances in topological dynamics (Proc. Conf. Topological Dynamics, Yale Univ., New Haven, Conn., 1972; in honor of Gustav Arnold Hedlund), pp. 1--5,
Lecture Notes in Math., Vol. 318, Springer, Berlin-New York, 1973.  
 
\bibitem[Ad2]{AdlerCF} \bysame, {\em Continued fractions and Bernoulli trials}, in:   Ergodic Theory (A Seminar), J. Moser, E. Phillips and S. Varadhan, eds., Courant Inst. of Math. Sci. (Lect. Notes 110), 1975, New York.  

\bibitem[Ar]{ArnouxCodage} P. Arnoux, {\em Le codage du flot g\'eod\'esique sur la surface modulaire. } Enseign. Math. (2) 40 (1994), no. 1-2, 29--48.

\bibitem[AS]{ArnouxSchmidtCross} P. Arnoux and T. A. Schmidt, {\em Cross sections for geodesic flows and $\alpha$-continued fractions},  Nonlinearity 26 (2013), 711--726.

\bibitem[AS2]{ArnouxSchmidtCommCF}   \bysame, {\em Commensurable continued fractions},
   Discrete and Continuous Dynamical Systems - Series A (DCDS-A) Vol. 34, no. 11, (2014) 4389--4418.

\bibitem[AS3]{ArnouxSchmidtNatExt}   \bysame, {\em  Natural extensions and Gauss measures for piecewise homographic continued fractions},  Bull. Soc. Math. France  147 (2019) 515--544. 
 
\bibitem[BJW]{BosmaJW} W. Bosma,  H.  Jager,   and F. Wiedijk, 
\emph{Some metrical observations on the approximation by continued
fractions}, Nederl.\ Akad.\ Wetensch.\ Indag.\ Math.\ {\bf 45}
(1983), no. 3, 281--299.

  
\bibitem[BCMP]{BruinEtAl}   H.~Bruin, C.~Carminati, S.~Marmi and A.~Profeti, {\em Matching in a family of piecewise affine maps}, Nonlinearity 32 (2019), no. 1, 172--208.

\bibitem[B]{BowenInvMeas}  R. Bowen,  {\em Invariant measures for Markov maps of the interval}, With an afterword by Roy L. Adler and additional comments by Caroline Series. Comm. Math. Phys. 69 (1979), no. 1, 1--17. 

\bibitem[BKS]{BKS} R. M. Burton, C. Kraaikamp  and T. A. Schmidt,  \emph{Natural extensions for the Rosen fractions}, Trans. Amer. Math. Soc.  352  (1999), 1277--1298.
 
\bibitem[CKS]{CaltaKraaikampSchmidt}
 K. Calta, C. Kraaikamp and T.~A.~Schmidt, {\em Synchronization is full measure for all  $\alpha$-deformations of an infinite class of continued fraction transformations},  Annali della Scuola Normale Superiore di Pisa, Classe di Scienze (5)
Vol. XX (2020), 951--1008.

 
\bibitem[CKS2]{CaltaKraaikampSchmidtPlanar} \bysame, {\em Continuity of entropy  for all  $\alpha$-deformations of an infinite class of continued fraction transformations},  \href{https://arxiv.org/abs/arXiv:2303.09708}{preprint 2023, 70 pp.: Arxiv/arXiv:2303.09708}.

 
\bibitem[CS]{CaltaSchmidt} K. Calta and T. A. Schmidt, {\em  Continued fractions for a class of triangle groups},  J. Austral. Math. Soc., 93 (2012) 21--42.


\bibitem[CIT]{CIT} C. Carminati, S. Isola and G. Tiozzo,
{\em Continued fractions with $\text{SL}(2, \mathbb Z)$-branches: combinatorics and entropy},
 Trans. Amer. Math. Soc. 370 (2018), no. 7, 4927--4973. 
 
\bibitem[CT]{CT} C. Carminati  and G. Tiozzo,
{\em  A canonical thickening of $\mathbb Q$ and the entropy of $\alpha$-continued fraction transformations}, Ergodic Theory Dynam. Systems 32 (2012), no. 4, 1249--1269.

 
\bibitem[DK]{DK} K.~Dajani and C.~Kraaikamp,  Ergodic Theory of Numbers,
The Carus Mathematical Monographs {\bf 29} (2002).


\bibitem[DKS]{DKS} K. Dajani, C. Kraaikamp, W.~Steiner, {\em Metrical theory for $\alpha$-Rosen fractions},  J. Eur. Math. Soc. (JEMS) 11 (2009), no. 6, 1259--1283.

  
\bibitem[EW]{einWar}  M.~Einsiedler and T.~ Ward,  Ergodic theory with a view towards number theory. Graduate Texts in Mathematics, 259. Springer-Verlag,  2011. 

\bibitem[Ha]{Haas}
A.~Haas, {\em 
Invariant measures and natural extensions},
Canad. Math. Bull. 45 (2002), no. 1, 97--108.


\bibitem[HKLM]{HKLM} Y.~Hartano, C.~Kraaikamp, N.~Langeveld and C.~Merriman,  
{\em Natural extensions and entropy of $\alpha$-continued fraction expansions with odd partial quotients}, 
 Discrete Contin. Dyn. Syst. 43 (2023), no. 8, 2852--2888. 

 
\bibitem[Ho]{Hopf} E.~Hopf,
{\em Ergodic theory and the geodesic flow on surfaces of constant negative curvature},  
Bull. Amer. Math. Soc. 77   (1971),  863--877.


\bibitem[KH]{KH}
A.~Katok  and B.~Hasselblatt, Introduction to the modern theory of dynamical systems.   Encyclopedia of Mathematics and its Applications, 54. Cambridge University Press, Cambridge, 1995.


\bibitem[IY]{ItoYuri} S. Ito and M.  Yuri, {\em 
Number theoretical transformations with finite range structure and their ergodic properties}, 
Tokyo J. Math. 10 (1987), no. 1, 1--32.

   
\bibitem[Ka]{KatokBoek}
S. Katok, Fuchsian Groups,  University of Chicago Press, 1992.


\bibitem[KU]{KatokUgarcoviciStructure} S.~Katok, I.~Ugarcovici,
{\em Structure of attractors for (a,b)-continued fraction transformations},
J. Mod. Dyn. 4 (2010), no. 4, 637--691.

\bibitem[KU2]{KatokUgarcoviciApps} \bysame
{\em Applications of (a,b)-continued fraction transformations}
Ergodic Theory Dynam. Systems 32 (2012), no. 2, 755--777. 

 
\bibitem[Ke]{Keane} 
M. Keane, {\em A continued fraction titbit},
Symposium in Honor of Benoit Mandelbrot (Cura\c{c}ao, 1995).
Fractals 3 (1995), no. 4, 641--650. 


\bibitem[Kr]{Kraaikamp} C.~Kraaikamp, \emph{A new class of
continued fraction expansions}, Acta Arith.\ 57 (1991), no.\ 1,
1--39.

\bibitem[KSSm]{KraaikampSchmidtSmeets} C. Kraaikamp, T. A. Schmidt and  I. Smeets,
{\em Natural extensions for $\lambda$-Rosen continued fractions}, 
J. Math. Soc. Japan  62, No. 2 (2010), 649--671.
 

\bibitem[KSS]{KraaikampSchmidtSteiner} C. Kraaikamp, T. A. Schmidt and  W. Steiner, {\em Natural extensions and entropy of $\alpha$-continued fractions}, Nonlinearity 25 (2012), no. 8, 2207--2243.
 
 
\bibitem[Ků]{KurkaBook} 
P.~Kůrka, 
Dynamics of number systems.
Computation with arbitrary precision. Studies in Systems, Decision and Control, 59. Springer, [Cham], 2016.


\bibitem[KK]{KurkaKazda}  
P. Kůrka,  and A.  Kazda, {\em M\"obius number systems based on interval covers}, 
Nonlinearity 23 (2010), no. 5, 1031--1046.  
 
 
\bibitem[LM]{LM}  L.~Luzzi and S.~Marmi,  \emph{On the entropy of Japanese
continued fractions}, Discrete Contin.\ Dyn.\ Syst.\ \textbf{20}
(2008), no.~3, 673--711.

\bibitem[N]{N} H.~Nakada, \emph{Metrical theory for a class of continued fraction
transformations and their natural extensions}, Tokyo J.\ Math.\ 4
(1981), 399-426.

\bibitem[NIT]{NakadaItoTanaka} H.~Nakada, S.~Ito, and S.~Tanaka, \emph{On the invariant
measure for the transformations associated with
some real continued-fractions}, Keio Engrg.\ Rep.\ 30 (1977), no.\
13, 159--175.


\bibitem[NN]{NN} H.~Nakada, and R.~Natsui, \emph{The non-monotonicity of the entropy of
$\alpha$-continued fraction transformations},  Nonlinearity
\textbf{21} (2008),  no.~6, 1207--1225.


 
\bibitem[O]{OrnsteinBSentropy}
D.~Ornstein, {\em Bernoulli shifts with the same entropy are isomorphic},  Advances in Math.,  4 (1970), 337--352.

\bibitem[ORW]{OrnsteinRudolphWeiss} D.~Ornstein,  D.~Rudolph and B.~Weiss, 
Equivalence of measure preserving transformations.
Mem. Amer. Math. Soc. 37 (1982), no. 262.

 
\bibitem[OW]{OrnsteinWeiss} D.~Ornstein and B.~Weiss, \emph{Geodesic flows are Bernouillian}, Israel. J. Math.  14 (1973), 184--198.


 
\bibitem[OW2]{OrnsteinWeiss2} \bysame,{\em  On the Bernoulli nature of systems with some hyperbolic structure}, Ergodic Theory Dynam. Systems 18 (1998), no. 2, 441--456.

\bibitem[P]{Petersen}
K.~Petersen,  Ergodic theory. Corrected reprint of the 1983 original. Cambridge Studies in Advanced Mathematics, 2. Cambridge University Press, Cambridge, 1989. 

\bibitem[R]{Rychlik} M.~Rychlik, {\em Bounded variation and invariant measures}, Studia Math. 76 (1983), no. 1, 69--80


\bibitem[Sch]{SchwOddEven} F. Schweiger,  {\em Continued fractions with odd and even partial quotients}, Arbeitsbericht
Mathematisches Institut Salzburg 4 (1982), 59--70.


\bibitem[Sch2]{Schw} \bysame,  Ergodic theory of fibred
systems and metric number theory. Oxford: Clarendon Press, 1995.

 
\bibitem[W]{Weiss} B.~ Weiss, 
{\em The isomorphism problem in ergodic theory},
Bull. Amer. Math. Soc. 78 (1972), 668--684.
 

\bibitem[Z]{Z} R. Zweim\"uller, {\em Ergodic structure and invariant densities of
non-Markovian interval maps with indifferent fixed points}, Nonlinearity 11 (1998) 1263--1276.
 
\end{thebibliography}
\end{document}